\newcommand\setItemnumber[1]{\setcounter{enum\romannumeral\@enumdepth}{\numexpr#1-1\relax}}
 \newcommand{\p}{\mathbb{P}}
\DeclareMathOperator{\Bir}{Bir}
\DeclareMathOperator{\Aut}{Aut}
\DeclareMathOperator{\PGL}{PGL}
\DeclareMathOperator{\GL}{GL}
\DeclareMathOperator{\Cr}{Cr}
\DeclareMathOperator{\SL}{SL}
\DeclareMathOperator{\PSO}{PSO}
\DeclareMathOperator{\Gr}{Gr}
\DeclareMathOperator{\Stab}{Stab}
\DeclareMathOperator{\Pic}{Pic}
\DeclareMathOperator{\A}{\mathfrak{A}}
\renewcommand{\H}{\mathscr{H}}
\newcommand{\C}{\mathbb{C}}
\newcommand{\GQ}{G\mathbb{Q}}
\newcommand{\Q}{\mathbb{Q}}
\DeclareMathOperator{\SG}{\mathfrak{S}}
\newtheorem{theorem}[equation]{Theorem}
\newtheorem{lemma}[equation]{Lemma}
\newtheorem{proposition}[equation]{Proposition}
\newtheorem{corollary}[equation]{Corollary}
\theoremstyle{definition}
\newtheorem{example}[equation]{Example}
\newtheorem{remark}[equation]{Remark}
\def\ttl {Finite 3-subgroups in Cremona group of rank 3}
\title{\ttl}
\author{Alexandra Kuznetsova} 
\address{
National research university Higher School of economics, Russia, Usacheva str. 6, 119048;
\'Ecole Polytechnique, France, CMLS, Route de Saclay, 91128 Palaiseau.}
 \email{sasha.kuznetsova.57@gmail.com}
\date{}
\begin{document}

\begin{abstract}
 We consider 3-subgroups in groups of birational automorphisms of rationally connected threefolds
 and show that any 3-subgroup can be generated by at most five elements. Moreover, we study groups of regular 
 automorphisms of terminal Fano threefolds and prove that in all cases which are not 
 among several explicitly described exceptions any 3-subgroup of such group can be generated by at most four elements.
\end{abstract}
\maketitle
\section{Introduction}
The group of automorphisms of the projective space of dimension $n$ over the field of complex numbers is isomorphic to the group $\PGL_{n+1}( \mathbb{C})$. 
However, if we study the group of birational automorphisms instead of regular ones, the description of the group is much more complicated.
The group of birational automorphisms of the projective $n$-dimensional space over $\mathbb{C}$  is called \emph{Cremona group of rank $n$};
we denote it $\Cr_n(\mathbb{C})$. In case where $n=1$ this group is isomorphic to $\PGL_2(\mathbb{C})$. 
For larger~$n$ the group is vast. Only for $n=2$ we can describe the group completely; for $n=3$ and higher we know much less.

A possible way to study a very big group is by considering its finite subgroups. Dolgachev and Is\-kov\-skikh~\cite{Dolgachev_Iskovskikh} have 
classified all finite subgroups of $\Cr_2(\mathbb{C})$.  Beauville \cite{Beauville} studied finite abelian~\mbox{$p$-sub}\-groups
of this group (i.e. subgroups of order $p^k$ where $p$ is a prime number). More precisely, he studied embeddings of groups $(\mathbb{Z}/p\mathbb{Z})^r$ to the Cremona group
of rank $2$, so-called elementary $p$-subgroups. We can see that the rank of an arbitrary abelian $p$-subgroup in the Cremona group is not
greater than the maximal rank of the elementary subgroup. The result from Beauville’s paper implies the following theorem:
 \begin{theorem}\label{thm_beau}
Any abelian $p$-subgroup $G$ in $\Cr_2(\mathbb{C})$ can be generated by $r$ elements, where
 \begin{equation*}
  r\leqslant \left\{ \begin{aligned}
   &4, \text{ if } p=2;\\
   &3, \text{ if } p=3;\\
   &2, \text{ if } p\geqslant 5;\\
 \end{aligned} \right. 
 \end{equation*}
 and this bound is sharp.
\end{theorem}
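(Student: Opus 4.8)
The plan is to reduce the statement about arbitrary abelian $p$-subgroups to the statement about \emph{elementary} $p$-subgroups, which is precisely the input supplied by Beauville's classification. Indeed, the excerpt already flags the mechanism: the minimal number of generators of an abelian $p$-subgroup is controlled by the largest elementary abelian subgroup it contains. So the whole argument is a formal consequence of the structure theory of finite abelian groups together with the cited ranks.

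First I would recall the elementary divisor theorem: a finite abelian $p$-group $G$ (and ``$p$-subgroup'' here means a finite group of order $p^k$, so this applies) decomposes as $G \cong \bigoplus_{i=1}^{r} \mathbb{Z}/p^{a_i}\mathbb{Z}$, where $r$ is the minimal number of generators of $G$. The point is that $r$ can be read off from the $p$-torsion subgroup $G[p] := \{\, g \in G : pg = 0 \,\}$, since each cyclic factor $\mathbb{Z}/p^{a_i}\mathbb{Z}$ contributes exactly one copy of $\mathbb{Z}/p\mathbb{Z}$ to $G[p]$; thus $G[p] \cong (\mathbb{Z}/p\mathbb{Z})^{r}$, equivalently $r = \dim_{\mathbb{F}_p}(G/pG)$ by the Burnside basis theorem. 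In particular $r$ equals the rank of an elementary abelian $p$-subgroup, namely $G[p]$, and $G$ cannot be generated by fewer than $r$ elements.

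Next, since $G[p]$ is a subgroup of $G \subset \Cr_2(\mathbb{C})$, it is itself an elementary $p$-subgroup of $\Cr_2(\mathbb{C})$ in the sense defined above. Beauville's classification then bounds its rank $r$ by $4$ when $p=2$, by $3$ when $p=3$, and by $2$ when $p \geqslant 5$, which gives the asserted inequalities for the number of generators of $G$. For sharpness I would invoke the existence half of that same classification, which exhibits genuine embeddings $(\mathbb{Z}/p\mathbb{Z})^{r} \hookrightarrow \Cr_2(\mathbb{C})$ realizing each maximal value of $r$; such a subgroup is an abelian $p$-subgroup requiring exactly $r$ generators, so the bound is attained.

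The substantive mathematical content — computing the maximal ranks of elementary abelian $p$-subgroups via del Pezzo surfaces, conic bundles and the equivariant Minimal Model Program — is exactly what Beauville's work provides and is the only hard step; the reduction presented here is purely group-theoretic. The one point I would take care to state explicitly is that passing from $G$ to $G[p]$ leaves the minimal number of generators unchanged, so that bounding the rank of the elementary subgroup bounds the generators of $G$ itself.
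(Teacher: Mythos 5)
Your proposal is correct and follows essentially the same route as the paper, which deduces the theorem from Beauville's classification of elementary $p$-subgroups via exactly this observation (stated there in one line: the rank of an arbitrary abelian $p$-subgroup is bounded by the maximal rank of an elementary one). Your write-up merely makes explicit, via $G[p]\cong(\mathbb{Z}/p\mathbb{Z})^{r}$ and the Burnside basis theorem, the reduction the paper takes for granted.
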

Prokhorov in \cite{2-subgroups} and \cite{Prokh} has proved some restrictions on ranks of abelian $p$-subgroups of~$\Cr_3(\mathbb{C})$
(in those papers Prokhorov considered elementary $p$-subgroups; however, as we saw above these results imply the same results for all abelian $p$-subgroups).
His results hold not only for the group $\Cr_3(\mathbb{C})$, but for a birational automorphism group $\Bir(X)$ of any rationally connected threefold; 
note that in dimension 2 rational connectedness implies rationality.

\begin{theorem}[{\cite[Theorem 1.2]{Prokh}, \cite[Theorem 1.2]{2-subgroups}}]\label{thm_prokh}
 Consider a projective rationally connected complex threefold  and an abelian $p$-subgroup $G$ in~$\Bir(X)$.
 Then $G$ can be generated by~$r$ elements, where
 \begin{equation*}
  r\leqslant \left\{ \begin{aligned}
   &6 \text{, if } p=2 \text{ and this bound is sharp};\\
   &5 \text{, if } p=3;\\
   &4 \text{, if } p=5,\ 7,\ 11 \text{ or } 13;\\
   &3 \text{, if } p\geqslant 17 \text{ and this bound is sharp}.\\
 \end{aligned} \right.
 \end{equation*} 
\end{theorem}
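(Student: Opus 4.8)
The plan is to conjugate the abstract action into a biregular one and then read off the rank from the geometry of a minimal model. First I would \emph{regularize}: choosing a rationally connected threefold $X$ with $G\subset\Bir(X)$, resolving indeterminacy and applying $G$-equivariant resolution of singularities produces a smooth projective $G$-variety, after which the $G$-equivariant minimal model program yields a $G$-Mori fibre space $f\colon Y\to S$ with $G\subset\Aut(Y)$, with $Y$ terminal and $\mathbb{Q}$-factorial, and relative Picard rank $\rho^G(Y/S)=1$. Since $\dim Y=3$ we have $\dim S\in\{0,1,2\}$ and $S$ rationally connected. Because $G$ is abelian, $\operatorname{rank}(G)\leqslant\operatorname{rank}(A)+\operatorname{rank}(G/A)$ for every subgroup $A$, so in each fibred case it suffices to bound separately the kernel and the image of the action on the base.

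When $\dim S=2$ the morphism $f$ is a conic bundle, giving $1\to C\to G\to G_S\to 1$ with $C$ acting faithfully on a general (rational) fibre, hence $C\hookrightarrow\PGL_2(\C)$, and $G_S\hookrightarrow\Aut(S)\subset\Cr_2(\C)$. Abelian $p$-subgroups of $\PGL_2(\C)$ have rank $\leqslant 2$ if $p=2$ and $\leqslant 1$ if $p$ is odd, while Theorem~\ref{thm_beau} bounds $\operatorname{rank}(G_S)$; the two estimates add to $6,4,3,3$ over the successive ranges of $p$. When $\dim S=1$ the base is $\p^1$ and $f$ is a del Pezzo fibration: here $1\to G_\eta\to G\to G_{\p^1}\to 1$ with $G_{\p^1}\hookrightarrow\PGL_2(\C)$ and $G_\eta$ acting on the generic fibre, a del Pezzo surface over $\C(t)$; passing to the geometric generic fibre and again applying Theorem~\ref{thm_beau} over the algebraic closure bounds $\operatorname{rank}(G_\eta)$, and one checks the sum never exceeds the claimed value. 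In both fibred cases the output is at most the bound of the theorem, and for $p=2$ the conic-bundle estimate $2+4=6$ is the one that will turn out to be sharp.

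The decisive and hardest case is $\dim S=0$, where $Y$ is a terminal \GQ-Fano threefold with $G\subset\Aut(Y)$ and no fibration is available, so the rank must be extracted from the geometry of $Y$ itself. The favourable situation is when $G$ has a fixed point $P$: the action linearises there, embedding $G$ either into $\GL_3(\C)$ (if $P$ is smooth) or into the automorphism group of the local terminal model $\tfrac1r(1,-1,b)$, in both cases forcing $\operatorname{rank}(G)\leqslant 3$. I would show that such a fixed point must exist once $p$ is large, via a Lefschetz/Euler-characteristic count (for a $p$-group the Euler characteristics of $Y$ and of its fixed locus $Y^G$ are congruent modulo $p$, and this class is nonzero once $p$ exceeds the bounded topological invariants of $Y$); this disposes of $p\geqslant 17$. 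The genuine obstacle is that for small primes $G$ may act \emph{without} a common fixed point—already $(\mathbb{Z}/2)^2\subset\PGL_2(\C)$ has no common fixed point on $\p^1$—so linearisation is unavailable and the bounds $5$ and $4$ exceed the local estimate. Here I would invoke boundedness of terminal Fano threefolds (the BAB theorem) to reduce to finitely many families and then bound the $p$-rank of $\Aut(Y)$ directly through the induced linear representation on $H^0(Y,-mK_Y)$ together with the action on the Weil class group and on the set of non-Gorenstein points; establishing the exact values $5$ and $4$, and excluding any larger elementary $p$-subgroup, is where the main effort is concentrated.

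Finally I would exhibit the sharp examples. For $p=2$ the value $6$ is realised by the conic-bundle construction above, taking a standard conic bundle over a rational surface that already carries the rank-$4$ elementary $2$-group of Theorem~\ref{thm_beau} and letting $(\mathbb{Z}/2)^2\subset\PGL_2(\C)$ act fibrewise. For $p\geqslant 17$ the value $3$ is attained by the diagonal subgroup $(\mathbb{Z}/p)^3\subset\PGL_4(\C)=\Aut(\p^3)$, so neither bound can be improved.
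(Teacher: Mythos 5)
You should first be aware that this paper contains no proof of Theorem~\ref{thm_prokh}: it is quoted verbatim from \cite{Prokh} and \cite{2-subgroups}, and the present text only uses it (via Proposition~\ref{prop_el_to_all} and Remark~\ref{rmk_proof_main_thm}) to deduce the five-generator bound for arbitrary $3$-subgroups. So the comparison can only be with the strategy of the cited papers. At that level your skeleton is the right one: regularization (Proposition~\ref{regularization}), the $G$-equivariant MMP terminating in a $G$-Mori fibre space, the trichotomy $\dim S=2,1,0$, subadditivity of rank in extensions of abelian groups, and Beauville's surface bounds (Theorem~\ref{thm_beau}) fed into the conic-bundle and del Pezzo fibration cases. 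Those two fibred cases in your write-up are essentially complete and correct, including the arithmetic $2+4=6$ for $p=2$, and your sharpness examples ($(\mathbb{Z}/2)^6$ on $S\times\p^1$ with $S$ carrying Beauville's rank-$4$ group, and the diagonal $(\mathbb{Z}/p)^3\subset\PGL_4(\mathbb{C})$) are valid.

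The genuine gap is the case $\dim S=0$, which is exactly where the content of the numbers $5$, $4$, $3$ and of the threshold $p\geqslant 17$ lives, and which you do not prove. Two concrete failures: (i) your Euler-characteristic fixed-point argument only gives $\chi(Y^G)\equiv\chi(Y)\pmod p$, which produces a fixed point solely when $\chi(Y)\not\equiv 0\pmod p$; you neither exclude $\chi(Y)=0$ nor extract any explicit threshold, let alone the specific value $17$, from ``$p$ exceeds the bounded topological invariants'' --- Prokhorov's actual arguments for terminal $G\mathbb{Q}$-Fano threefolds run instead through orbifold Riemann--Roch numerology, in particular the Kawamata basket bound $\sum_{i,j}\bigl(r_{ij}-1/r_{ij}\bigr)<24$ (the very inequality this paper re-uses in Corollary~\ref{corollary_reid_imply}), Namikawa smoothings, and case analysis of anticanonical models, not a Lefschetz count plus BAB. (ii) For the small primes you write that establishing the exact values $5$ and $4$ ``is where the main effort is concentrated'' and propose to bound the $p$-rank of $\Aut(Y)$ family-by-family after BAB; that is a restatement of the theorem's hard part, not a proof step --- reduction to finitely many families does not by itself bound ranks, since one must still control the representation on $H^0(Y,-mK_Y)$, the action on singular points, and the non-fixed-point configurations you yourself identify as the obstruction. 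As submitted, the proposal proves the fibred cases and defers the Fano case, so it does not constitute a proof of the stated theorem.
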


It is natural to ask how much of this remains to be true for not necessarily abelian $p$-subgroups. 
In the paper by Prokhorov and Shramov \cite{p-subgroups} all $p$-subgroups
of~$\Cr_3(\mathbb{C})$ for large prime numbers $p$ were described.
\begin{theorem}[{\cite[Theorem 1.5]{p-subgroups}}]\label{thm_ps}
 Assume that $X$ is a rationally connected complex threefold and~$G$ is a $p$-subgroup of $\Bir(X)$.
 If $p\geqslant 17$, then $G$ is abelian and it can be generated by $3$ elements or less.
\end{theorem}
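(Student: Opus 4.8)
The plan is to split the statement into its two assertions and to notice that the bound on the number of generators is the easy half: once we know that $G$ is abelian, Theorem~\ref{thm_prokh} applies verbatim and yields that $G$ is generated by at most $3$ elements when $p\geqslant 17$. Thus the entire difficulty is concentrated in proving that a $p$-subgroup $G\subset\Bir(X)$ is \emph{abelian} as soon as $p\geqslant 17$. To attack this I would first reduce from birational to biregular automorphisms: since $G$ is finite it can be regularized, so there is a smooth projective threefold $X'$ birational to $X$ with $G\subset\Aut(X')$, and $X'$ is again rationally connected. Running a $G$-equivariant minimal model program and using rational connectedness produces a $G$-equivariant Mori fibre space $\pi\colon X''\to S$ with $X''$ terminal and $\rho^{G}(X''/S)=1$, which splits the argument into three cases according to $\dim S\in\{0,1,2\}$. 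In each fibred case there is an exact sequence
\begin{equation*}
1\longrightarrow G_{0}\longrightarrow G\longrightarrow G_{S}\longrightarrow 1,
\end{equation*}
where $G_{S}$ is the image of $G$ in $\Bir(S)$ and $G_{0}$ is the subgroup acting fibrewise.

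Next I would control $G_{0}$ and $G_{S}$ separately. When $\dim S=2$ the surface $S$ is rational and the generic fibre is a conic, so $G_{0}$ embeds into $\PGL_{2}$ over the function field and is therefore cyclic for $p$ odd, while $G_{S}$ is a $p$-subgroup of $\Cr_{2}(\C)$, hence abelian of rank at most $2$ for $p\geqslant 5$ by the classification of Dolgachev and Iskovskikh~\cite{Dolgachev_Iskovskikh} together with Theorem~\ref{thm_beau}. When $\dim S=1$ the base is $\p^{1}$, so $G_{S}\subset\PGL_{2}$ is cyclic and $G_{0}$ is a $p$-subgroup of the automorphism group of a del Pezzo surface over the function field, again abelian of rank at most $2$.

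When $S$ is a point, $X''$ is a terminal $G$-Fano threefold and $G\subset\Aut(X'')$. Here $\Aut(X'')$ is a linear algebraic group; its component group is finite of bounded order by the boundedness of terminal Fano threefolds, hence of order prime to $p$ for $p\geqslant 17$, so $G\subseteq\Aut^{0}(X'')$. In characteristic $0$ the unipotent radical contains no nontrivial finite subgroup, so $G$ injects into the reductive quotient of $\Aut^{0}(X'')$, which has rank at most $3$ and only small torsion primes. Since $p\geqslant 17$ exceeds all of these, every such $p$-subgroup is toral, hence abelian of rank at most $3$, and the Fano case is complete.

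The main obstacle is the two fibred cases, where the abelian-ness of $G_{0}$ and $G_{S}$ does not yet force that of $G$: a priori $G$ is only metabelian with $[G,G]\subseteq G_{0}$. The first step is to show that $G_{0}$ is in fact central. For a conic bundle this follows from a fixed-point argument: the two points fixed by the cyclic group $G_{0}$ on each fibre form a $G$-invariant bisection, and since the eigenvalue of an element of $G_{0}$ along a sheet is a locally constant, hence constant, root of unity, the conjugation action of $G$ on $G_{0}$ can act only through the sheet-swap $\lambda\mapsto\lambda^{-1}$; as $G$ is a $p$-group with $p$ odd this action is trivial, so $G_{0}\subseteq Z(G)$. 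One is then left with a central extension of the abelian group $G_{S}$ by $G_{0}$, and $G$ is abelian precisely when the induced alternating commutator pairing $G_{S}\times G_{S}\to G_{0}$ vanishes. Establishing this vanishing for $p\geqslant 17$ --- equivalently, excluding extraspecial groups $p^{1+2}$ and the other minimal non-abelian $p$-groups from acting on a conic bundle or a del Pezzo fibration over a rational base --- is the step I expect to be hardest, and it is where the fine classification of $p$-subgroups of $\Cr_{2}(\C)$ and a careful analysis of the fibration must be brought to bear; the del Pezzo case requires in addition controlling how the cyclic base group $G_{S}\subset\PGL_{2}$ acts on the fibrewise automorphisms $G_{0}$.
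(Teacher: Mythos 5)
First, a point of comparison: the paper does not prove Theorem \ref{thm_ps} at all --- it is quoted from \cite[Theorem 1.5]{p-subgroups}, and the only indication given here of how it is proved is the remark in the introduction that Prokhorov and Shramov find a point of the threefold fixed by the whole $p$-group and study its stabilizer. Once such a fixed point $x$ exists, Proposition \ref{fixed_point} places $G$ inside $\GL(T_x)$, of dimension at most $4$ at a terminal point, where for large $p$ a $p$-subgroup normalizes a maximal torus with quotient inside a small symmetric group such as $\SG_3$, hence is diagonalizable and abelian of rank at most $3$. The hard content of \cite{p-subgroups} is the \emph{existence} of the fixed point for $p\geqslant 17$ --- precisely the phenomenon that fails for $p=3$ (Example \ref{no_fixed_points_ab}) and forces the present paper into its case-by-case analysis. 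Your proposal replaces this fixed-point strategy by a different scheme (algebraic-group structure in the Fano case, central-extension analysis in the fibred cases), and in its present form it has two genuine gaps.

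In the case $\dim S=0$ your argument breaks at the claim that the component group $\Aut(X'')/\Aut^{0}(X'')$ has order prime to $p$ ``by boundedness of terminal Fano threefolds'', so that $G\subseteq\Aut^{0}(X'')$. Boundedness of the family does not bound the primes dividing these finite component groups --- controlling exactly this is, in effect, what the theorem asserts --- and the claim is false as stated: the Klein-type quartic threefold $\{x_0^3x_1+x_1^3x_2+x_2^3x_3+x_3^3x_4+x_4^3x_0=0\}\subset\p^4$ has finite automorphism group (so $\Aut^{0}$ is trivial) yet carries a diagonal automorphism of prime order $61$, since the congruences $3a_i+a_{i+1}\equiv 0$ force $244\,a_0\equiv 0$ and $244=4\cdot 61$. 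Thus for such $X''$ a nontrivial $p$-subgroup lies entirely in the component group, and your reduction to the reductive quotient of $\Aut^{0}$ never gets started; the toral argument you then run (rank $\leqslant 3$, Weyl groups of order prime to $p\geqslant 17$) is fine but applies to the wrong group. In the fibred cases $\dim S\in\{1,2\}$ you candidly leave open the decisive step: showing that the central extension of $G_S$ by $G_0$ is split on commutators, i.e.\ excluding extraspecial groups of order $p^3$ from acting on conic bundles and del Pezzo fibrations. Since that is the entire content of abelianness there (everything before it only shows $G$ is metabelian), the proposal is a programme rather than a proof; by contrast, the fixed-point route of \cite{p-subgroups} disposes of all three cases at once, because a $p$-subgroup of $\GL_3(\C)$ with $p\geqslant 5$ is automatically abelian, with no extension analysis needed.
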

Our goal is to give a description of all $3$-subgroups of Cremona groups of rank $2$ and $3$.
Our main result is the following:
\begin{theorem}\label{main_theorem_dim3}
 Consider a projective rationally connected complex threefold $X$ and a~\mbox{$3$-sub}\-group $G$ of $\Bir(X)$. 
 Then the following is true:
  \begin{enumerate}
   \item[$1.$]  The group $G$ can be generated by $5$ elements or less.
   \item[$2.$]  If $G$ acts by regular automorphisms on a minimal terminal $G$-Mori fiber space $X_0$ of dimension~$3$
   (for example, on a terminal $\GQ$-Fano variety with Picard number one), then $G$ can be generated 
   by $4$ elements or less in all cases which are not among the following:
  \begin{enumerate}
   \item[$(a)$]  $X_0$ is a Fano variety, the number of its non-Gorenstein singularities 
   equals $9$ and all of them are cyclic quotient singularities of type $\frac{1}{2}(1,1,1)$.
   \item[$(b)$]  $X_0$ is a Gorenstein Fano variety of Picard number one, of genus $7$ or $10$
   and the number of singularities of $X_0$ equals $9$ or $18$.
  \end{enumerate}
  \end{enumerate}
\end{theorem}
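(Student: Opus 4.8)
The plan is to put the $G$-action into a standard geometric form via the $G$-equivariant Minimal Model Program. Since the minimal number of generators $d(G)$ depends only on the isomorphism class of $G$, I may replace $X$ by any $G$-birational model; equivariant MMP then produces a terminal $G$-Mori fiber space $\pi\colon X_0\to S$ of relative Picard number one on which $G$ acts by regular automorphisms, with $\dim S\in\{0,1,2\}$. This reduces the birational statement~$1$ to the regular situation of statement~$2$: if I bound $d(G)$ by $4$ off the exceptional list and by $5$ everywhere, statement~$1$ follows as the global maximum. The argument now splits according to $\dim S$.

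Suppose first $\dim S=2$, so $\pi$ is a conic bundle. Restriction to the generic fiber gives an exact sequence $1\to G_\eta\to G\to G_S\to 1$, where $G_\eta$ injects into the automorphism group of the generic conic — a form of $\PGL_2$ over $\C(S)$ — and $G_S\subseteq\Aut(S)$. Every $3$-subgroup of a form of $\PGL_2$ is cyclic, so $d(G_\eta)\le 1$; and $S$ is a $G$-minimal rational surface, for whose automorphism groups our description of $3$-subgroups in the rank-two case gives $d(G_S)\le 3$. Hence $d(G)\le d(G_\eta)+d(G_S)\le 4$. If $\dim S=1$ then $S\cong\p^1$ and $\pi$ is a del Pezzo fibration; the analogous sequence has $G_{\p^1}$ a $3$-subgroup of $\PGL_2(\C)$, again cyclic, and $G_\eta$ a $3$-subgroup of the automorphism group of the generic del Pezzo surface over $\C(t)$, again bounded by $3$ by the same surface analysis. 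So once more $d(G)\le 4$, and no exceptions arise when $\dim S\ge 1$.

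The heart of the matter is $\dim S=0$, where $X_0$ is a terminal $\GQ$-Fano threefold of Picard number one. The engine is an elementary fact about finite $3$-groups: for any subgroup $H\le G$,
\[
 d(G)\le d(H)+\log_3[G:H],
\]
obtained by refining $H$ to $G$ through a chain of index-$3$ (hence normal) subgroups and applying $d(G')\le d(N)+d(G'/N)$ at each step. I would apply this with $H=\Stab_G(P)$ for $P$ running over a $G$-orbit of singular points of a fixed type. At a non-Gorenstein cyclic quotient singularity $\frac{1}{r}(a_1,a_2,a_3)$ with $3\nmid r$ the stabilizer lifts to a linear action on the tangent space of the index-$r$ cover, so $H\hookrightarrow\GL_3(\C)$ and $d(H)\le 3$; thus $d(G)\le 3+\log_3(\#\,\text{orbit})$, and one gets $d(G)\le 4$ as soon as every $G$-orbit of such points has length at most $3$. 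A $G$-fixed point, smooth or singular, even gives $G\hookrightarrow\GL_3(\C)$ and the better bound $d(G)\le 3$.

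It remains to control the singularities, and this is where the exceptions and the main obstacle lie. Orbifold Riemann--Roch together with the boundedness of terminal Fano threefolds restricts the basket of non-Gorenstein points, while in the Gorenstein case the classification of Fano threefolds by genus (Mukai) pins down the geometry; one must then check, basket by basket, that a $G$-orbit of singular points of length $9$ can be avoided, either because a shorter orbit or a $G$-fixed point is forced, yielding $d(G)\le 4$. The configurations that survive this bookkeeping are exactly those in which the singular points are compelled into $G$-orbits of length nine carrying a three-generated stabilizer: transparently so in $(a)$, the nine points of type $\frac{1}{2}(1,1,1)$, where $d(G)$ can reach $3+\log_3 9=5$; and, through Mukai's description of the genus $7$ and $10$ families, in $(b)$, the Gorenstein threefolds with $9$ or $18$ singular points. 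This simultaneously produces the exceptions in statement~$2$ and, as the maximum over all Mori fiber space types, the uniform bound $d(G)\le 5$ of statement~$1$. The decisive difficulty is the completeness of this last analysis: showing that outside $(a)$ and $(b)$ the interplay of Riemann--Roch, boundedness and the Fano classification always supplies a short $G$-orbit or a fixed point.
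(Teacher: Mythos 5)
Your global architecture (equivariant MMP to a terminal $G$-Mori fiber space, extensions over positive-dimensional bases, and on a $\GQ$-Fano the inequality $d(G)\leqslant d(\Stab_G(x))+\log_3[G:\Stab_G(x)]$ applied to orbits of singular points) matches the paper's skeleton, and your derivation of statement 1 as the maximum over all fiber-space types is exactly the alternative route the paper records in Remark \ref{rmk_final}; the paper's official proof of statement 1 is instead the Frattini--Burnside reduction to Prokhorov's abelian rank bound (Proposition \ref{prop_el_to_all}). But your proposal has a genuine gap at its central local step: you claim that a $G$-fixed point, ``smooth or singular,'' gives $G\hookrightarrow\GL_3(\C)$, and you only treat non-Gorenstein points that are cyclic quotients with $3\nmid r$. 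A Gorenstein terminal singular point is a $cDV$ hypersurface point, whose Zariski tangent space is $4$-dimensional, so the fixed-point embedding is only into $\GL_4(\mathbb{C})$, which a priori yields $4$ generators (Proposition \ref{gl4}), not $3$; and the index-$r$ cover of a general non-Gorenstein terminal point is again $cDV$, not smooth, so the $\GL_3$ reduction fails there too. The paper's Lemma \ref{stabilizer_of_singularity} closes precisely this gap: it classifies the $3$-subgroups of $\GL_4(\mathbb{C})$ that need $4$ generators (Corollary \ref{corollary_3sgr_gl3_3generators}: a rank-$4$ torus subgroup, or a product involving the matrices \eqref{eq_matrix_in_3group}) and shows each is incompatible with the irreducibility and nontrivial quadratic part of the semi-invariant $cDV$ equation from Reid's list. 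Without the resulting bound $d(\Stab)\leqslant 3$ your orbit arithmetic becomes $4+\log_3 N$, which already fails to give $4$ for orbits of length $3$ and gives $6$, not $5$, in the exceptional configurations.

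The second gap is that your endgame --- ``check basket by basket that a short orbit or fixed point is forced'' --- cannot work for Gorenstein Fano threefolds of Picard number one, because smooth ones have no singular points to take orbits of, and no fixed point need exist (Example \ref{no_fixed_points_ab} shows even $\mathrm{C}_3\times\mathrm{C}_3$ can act on $\p^2$ without fixed points). This is where the bulk of the paper lives and where your proposal, as you yourself concede, is only a plan: the paper handles $\rho>1$ via Prokhorov's classification (Theorem \ref{Fano_big_rho}), small genus via the anticanonical map (varieties of minimal degree, pencils and nets of quadrics, Lemma \ref{lemma_3gr_g_equals_5}), genus $6$ via the Gushel--Mukai embedding into $\Gr(2,5)$ (Lemma \ref{lemma_3gr_g_equals_6}), smooth genus $7$--$12$ via the faithful action on the Hilbert scheme of conics $S(X)$ --- a symmetric square of a curve, a $\p^1$-bundle over a genus-$3$ curve, $\mathrm{Jac}(C)$, or $\p^2$ (Lemma \ref{lemma_Fano_r1_g7910}) --- genus $8$ via the associated cubic threefold, and singular Gorenstein cases via Namikawa smoothing and $\Q$-factoriality bounds on the number of singular points (Corollary \ref{corollary_namikawa_imply}, Lemma \ref{lemma_r1_g8912}), which is also how the exception $(b)$ with $9$ or $18$ singular points actually arises. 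Your fibration cases and the non-Gorenstein orbit analysis leading to exception $(a)$ are essentially correct and parallel to Lemmas \ref{lemma_mb}, \ref{lemma_less_27_singularities} and Corollary \ref{corollary_reid_imply}, but the two items above --- the $\GL_4$ stabilizer analysis at $cDV$ points and the entire smooth/Gorenstein case division --- are missing ideas, not routine verifications.
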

The first assertion of this theorem can be deduced from Theorem~\ref{thm_prokh} (see Remark \ref{rmk_proof_main_thm}).
However to prove the second assertion, more work is necessary: possibly this assertion could be used for further improvement  
of the bound on the number of generators of~\mbox{$3$-sub}\-groups
in groups of birational automorphisms of rationally connected complex threefolds (see Remark \ref{rmk_final}).

Remark \ref{rmk_proof_main_thm} and  Theorem \ref{thm_beau} also imply the following theorem.
It can be deduced from the Dolgachev and Iskovskikh classification of finite subgroups of~\mbox{$\Cr_2(\mathbb{C})$,} but we give another proof.
\begin{theorem}\label{main_theorem_dim2}
 Any $3$-subgroup $G$ of $\Cr_2(\mathbb{C})$ can be generated by $3$ elements or less and this bound is sharp.
\end{theorem}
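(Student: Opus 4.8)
My plan is to reduce the general statement to the abelian case furnished by Theorem~\ref{thm_beau}, by combining the Burnside basis theorem with the standard fact that a birational action of a finite group descends faithfully to the quotient by a normal subgroup. The first step is purely group-theoretic: for a finite $3$-group $G$ the minimal number of generators equals $d(G)=\dim_{\mathbb{F}_3} G/\Phi(G)$, where $\Phi(G)=[G,G]G^3$ is the Frattini subgroup, and $G/\Phi(G)\cong(\mathbb{Z}/3\mathbb{Z})^{d(G)}$ is elementary abelian. Thus it suffices to prove $d(G)\leqslant 3$.

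Next I would pass to a quotient in order to manufacture an abelian subgroup of the expected rank. Set $N=\Phi(G)$ and consider the quotient surface $Y=\p^2/N$. Since $\p^2$ is rational and $N$ is finite, $Y$ is unirational, hence rational by Castelnuovo's criterion (equivalently, a quotient of a rationally connected variety is rationally connected, and a rationally connected surface is rational), so $\Bir(Y)\cong\Cr_2(\C)$. The group $\bar G=G/N\cong(\mathbb{Z}/3\mathbb{Z})^{d(G)}$ acts on $Y$ by birational transformations. The crucial point is that this descended action is \emph{faithful}: if $g\in G$ fixes every generic $N$-orbit, then $gx\in Nx$ for generic $x$, so $\p^2$ is covered by the finitely many loci $\{x:gx=nx\}$, $n\in N$; if $n^{-1}g\neq\mathrm{id}$ for all $n$ each such locus is a proper closed subset, and their finite union cannot be dense, a contradiction. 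Hence some $n^{-1}g=\mathrm{id}$, i.e. $g\in N$, which shows $\bar G\hookrightarrow\Bir(Y)\cong\Cr_2(\C)$.

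Now $\bar G$ is an elementary abelian, in particular abelian, $3$-subgroup of $\Cr_2(\C)$, so Theorem~\ref{thm_beau} applied with $p=3$ gives $d(G)=\dim_{\mathbb{F}_3}\bar G\leqslant 3$, which is the asserted bound. For sharpness I would simply note that the bound $3$ in Theorem~\ref{thm_beau} is already realised by an elementary abelian subgroup $(\mathbb{Z}/3\mathbb{Z})^3\subset\Cr_2(\C)$ requiring exactly three generators; as this is itself a $3$-subgroup, the bound in the present statement is sharp.

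I expect the main obstacle to be the faithfulness of the action of $\bar G$ on $Y$: one must check that quotienting by $N$ does not collapse the elementary abelian quotient, and this is precisely where the rigidity of finite birational group actions enters. By contrast, the Burnside reduction and the rationality of $Y$ are routine, and once faithfulness is secured the conclusion follows immediately from Theorem~\ref{thm_beau}. This argument is the dimension-two specialisation of the mechanism recorded in Remark~\ref{rmk_proof_main_thm}, with Theorem~\ref{thm_prokh} replaced by Theorem~\ref{thm_beau}.
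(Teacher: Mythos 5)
Your proposal is correct and is essentially the paper's own argument: Proposition~\ref{prop_el_to_all} performs exactly this reduction, passing to the Frattini quotient $A=G/\Phi(G)$ acting on the (rationally connected, hence in dimension two rational) quotient variety, applying Theorem~\ref{thm_beau}, and concluding via the Burnside basis theorem. The only cosmetic difference is that the paper first regularizes the action via Proposition~\ref{regularization} before taking the quotient, whereas you work birationally and supply the faithfulness argument by hand, a detail the paper leaves implicit.
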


In order to prove Theorem \ref{main_theorem_dim3} we can not repeat the argument used by Prokhorov and Shramov in~\cite{p-subgroups} to estimate the number of generators 
of $p$-subgroups. They were looking for a point on the threefold fixed by the action of $G$ and study its
stabilizer. However in the case $p=3$ it is possible that there are no fixed points. In Example \ref{no_fixed_points_ab} we describe an action 
of the $3$-group~\mbox{$\mathbb{Z}/3\mathbb{Z}\times\mathbb{Z}/3\mathbb{Z}$} on~$\p^2$ with no fixed points. 
Moreover, in contrast to the result in Theorem \ref{thm_ps}, the groups of birational automorphisms of projective plane and rationally connected threefolds contain non-abelian
$3$-subgroups. In particular, the Heisenberg group acts  on $\p^2$. This group and its embedding to $\PGL_3(\mathbb{C})$ are described 
in Lemma \ref{lemma_image_of_H}. Also the proof of Theorem \ref{main_theorem_dim3} uses recent results on the number of Gorenstein
terminal singularities on Fano threefolds \cite{GFano} and \cite{sing_X8912}. In addition, we use new results about
groups of automorphisms of smooth Fano threefolds \cite{Hilb_Fano}.

In Example \ref{ex_dim=3} we show that the bound on the number of generators of 3-groups in case of threefolds can not be smaller than~4.
Thus, our bound is close to being sharp. Possibly the result could be improved if the study would be restricted only to birational
automorphisms of the projective space instead of an arbitrary rationally connected threefold.

The work is organized as follows: in \S2 we list some assertions useful for study of the regular action of~\mbox{3-groups} on varieties.
In \S3 we prove Theorem \ref{main_theorem_dim2} and give some information about action of~\mbox{$3$-groups} on some specific
surfaces. In \S4 we study 3-subgroups in groups of regular automorphisms of Fano threefolds and prove Theorem \ref{main_theorem_dim3}.
In Appendix \ref{appendix} we estimate the number of generators of any 3-subgroup of $\GL_n(\mathbb{C})$ for $n<9$ and 
state several useful assertions from linear algebra.

All varieties here are supposed to be projective, normal and defined over the field $\C$. We denote by~$\SG_n$ the group of permutations of $n$
elements; by $\A_n$ the group of even permutations of $n$ elements; and by~$\mathrm{C}_n$ the cyclic group of $n$ elements.
We denote by $\rho(X)$ the Picard number of $X$, namely, the rank of Picard group $\Pic(X)$.

\smallskip
\textbf{Acknowledgements.}
I am very grateful to my advisor, Constantin Shramov, for suggesting this problem as well as for his patience and invaluable support.
I also thank Artem Avilov, Andrey Trepalin and Yuri Prokhorov for useful discussions. 
This work is supported by  Russian Science Foundation under grant~\mbox{\textnumero{18-11-00121}}.

\section{Elementary properties of $3$-groups}

In this section we state some properties of $3$-groups with a faithful action on algebraic 
varieties and give examples of such actions. We use the results and notations from Appendix \ref{appendix}.

The variety is called \emph{$G$-variety}, if the group $G$ acts faithfully on it. The fiber space $X \to B$
is called~\mbox{\emph{$G$-fiber space}}, if $X$ is a $G$-variety and the action of $G$ preserves the fiber space structure.
For a variety $X$ and a point $x$ on it by $T_x$ we denote the Zariski tangent space to $X$ in $x$.

\subsection{Properties of group actions on varieties}

Consider a finite group $G$ and a $G$-fiber space $X$ over base $B$. Assume that the fiber of $X$ over a general point $B$ is 
isomorphic to $F$. Then $G$ is a following extension of groups:
\begin{equation}\label{base-fiber_es}
 1\to G_F \to G \to G_B \to 1.
\end{equation}
Here $G_F \subset \Aut(F)$ is a subgroup of all elements of $G$ inducing trivial automorphisms of the base of the fiber space 
and~\mbox{$G_B\subset \Aut(B)$} is a quotient group $G/G_F$. 
 \begin{lemma}\label{lemma_Gbundle}
 Assume that $X\to B$ is a $G$-fiber space with a general fiber $F$, and $G$ is a~\mbox{$3$-group}.
 If any~\mbox{$3$-sub}\-group in $\Aut(F)$ and in $\Aut(B)$ can be generated by $n$ or $m$ elements respectively, then $G$ can be generated
 by $n+m$ elements.
 
 In particular, if $X\to B$ is a double cover, then $G\subset \Aut(B)$.
\end{lemma}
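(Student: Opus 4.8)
The plan is to read off everything from the base–fiber exact sequence \eqref{base-fiber_es}, reducing the statement to a standard fact about the generation of group extensions. First I would observe that since $G$ is a $3$-group, both the normal subgroup $G_F$ and the quotient $G_B$ are again $3$-groups, because any subgroup or quotient of a finite $p$-group is a $p$-group. As $G_F$ embeds into $\Aut(F)$ and $G_B$ embeds into $\Aut(B)$, the hypotheses of the lemma apply directly and furnish generating sets $h_1,\dots,h_n$ of $G_F$ and $\bar g_1,\dots,\bar g_m$ of $G_B$, with $n$ and $m$ as in the statement.

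Next I would lift the quotient generators: choose $g_1,\dots,g_m\in G$ mapping to $\bar g_1,\dots,\bar g_m$ under the surjection $G\to G_B$, and set $H=\langle g_1,\dots,g_m,h_1,\dots,h_n\rangle$. The claim is that $H=G$. Indeed, the image of $H$ in $G_B$ contains all the $\bar g_i$, hence equals $G_B$, so $H\cdot G_F=G$; on the other hand $H$ already contains $G_F=\langle h_1,\dots,h_n\rangle$, whence $H=H\cdot G_F=G$. This exhibits $G$ as generated by $n+m$ elements. This part is entirely formal, and I expect no real obstacle here; the only point deserving a line is that the $\bar g_i$ genuinely lift into $G$, which is immediate from surjectivity of $G\to G_B$.

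For the final assertion, I would specialize to a double cover, where the general fiber $F$ consists of two points and $\Aut(F)\cong\SG_2$ has order $2$. Since $G_F$ is a $3$-subgroup of a group of order $2$, it must be trivial; the sequence \eqref{base-fiber_es} then collapses to an isomorphism $G\xrightarrow{\ \sim\ }G_B\subset\Aut(B)$, which is exactly the claim $G\subset\Aut(B)$. The main thing to keep honest throughout is that $G_F$ really is realized as a subgroup of $\Aut(F)$ acting faithfully on the general fiber, so that both the first statement (in order to invoke the hypothesis on $3$-subgroups of $\Aut(F)$) and the double-cover corollary (in order to force $G_F=1$) are legitimate; this faithfulness is built into the definition of $G_F$ recorded just before the lemma.
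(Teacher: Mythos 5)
Your proposal is correct and follows the paper's proof exactly: the paper likewise reads everything off the exact sequence \eqref{base-fiber_es}, invokes its Lemma \ref{generators_of_group_extension} for the $n+m$ bound (you merely inline that lemma's standard lifting argument instead of citing it), and handles the double cover by noting $G_F\subset\mathrm{C}_2$ forces $G_F=1$ for a $3$-group.
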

\begin{proof}
 Since $G$ is finite and preserves the fiber space structure, we have an exact sequence \eqref{base-fiber_es}. 
 By Lemma \ref{generators_of_group_extension} since groups $G_F$ and $G_B$ can be generated by $n$ and $m$ elements respectively
 $G$ can be generated by $n+m$ elements.
 
 If $X\to B$ is a finite $n$-cover , then $\Aut(F)\cong\SG_n$. In particular, if $n=2$, then  $G_F\subset \mathrm{C}_2$.
 Since $G$ and $G_F$ are both 3-groups, then  $G_F=1$ and~\mbox{$G\subset \Aut(B)$}.
\end{proof}
Next assertion is useful for actions of finite groups which fix a point on the variety.
\begin{proposition}[{\cite[Lemma 4]{Tangent_space}}]\label{fixed_point}
Assume that a finite group $G$ acts on a variety $X$ and fixes a point~$x$. Then $G\subset\GL(T_x)$.
\end{proposition}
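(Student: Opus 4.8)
The plan is to construct the natural differential representation $\rho\colon G\to\GL(T_x)$ and then show that it is faithful. First I would record that, since every $g\in G$ fixes $x$, it induces an automorphism of the local ring $\mathcal{O}_{X,x}$ carrying the maximal ideal $\mathfrak{m}_x$ into itself; hence $g$ acts linearly on the cotangent space $\mathfrak{m}_x/\mathfrak{m}_x^2$, and dually on $T_x=(\mathfrak{m}_x/\mathfrak{m}_x^2)^\vee$. Sending $g$ to this linear map defines a group homomorphism $\rho\colon G\to\GL(T_x)$, and the whole content of the statement is that $\rho$ is injective.

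To prove injectivity, suppose $g\in\ker\rho$, so that $g$ acts trivially on $\mathfrak{m}_x/\mathfrak{m}_x^2$. For each $j$ the multiplication map $\mathrm{Sym}^j(\mathfrak{m}_x/\mathfrak{m}_x^2)\twoheadrightarrow\mathfrak{m}_x^j/\mathfrak{m}_x^{j+1}$ is surjective and $G$-equivariant, so $g$ acts trivially on every graded piece $\mathfrak{m}_x^j/\mathfrak{m}_x^{j+1}$. Consequently, on each finite-dimensional quotient $\mathcal{O}_{X,x}/\mathfrak{m}_x^{N}$ the operator $g-\mathrm{id}$ strictly raises the $\mathfrak{m}_x$-adic filtration and is therefore nilpotent; in other words $g$ acts unipotently on every such quotient.

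The decisive step, and the one I expect to be the crux, is to upgrade this unipotence to triviality. Over $\C$ a finite-order automorphism is semisimple: its minimal polynomial divides $t^{n}-1$ with $n=\mathrm{ord}(g)$, which has distinct roots, so $g$ is diagonalizable, and a diagonalizable unipotent operator is the identity. Applying this on every $\mathcal{O}_{X,x}/\mathfrak{m}_x^{N}$ shows that $g$ acts as the identity on the completion $\hat{\mathcal{O}}_{X,x}$, and in particular on its subring $\mathcal{O}_{X,x}$ (the inclusion being injective by Krull's theorem). Since $\mathcal{O}_{X,x}$ has fraction field $\C(X)$, the automorphism $g$ acts trivially on the function field of $X$; as $X$ is an irreducible variety, an automorphism trivial on $\C(X)$ agrees with the identity on a dense open set and hence everywhere. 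Therefore $g=\mathrm{id}$, $\ker\rho$ is trivial, and $G\hookrightarrow\GL(T_x)$, as claimed.
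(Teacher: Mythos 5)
Your proof is correct, and it is essentially the same argument as the one underlying the paper's source: the paper offers no proof of this proposition, quoting it from \cite{Tangent_space}, and the standard proof of that lemma is exactly your completion argument --- triviality on $\mathfrak{m}_x/\mathfrak{m}_x^2$ propagates to all graded pieces and gives unipotence on each $\mathcal{O}_{X,x}/\mathfrak{m}_x^{N}$, finite order over $\C$ gives semisimplicity, and Krull's intersection theorem descends triviality from $\widehat{\mathcal{O}}_{X,x}$ to $\mathcal{O}_{X,x}$ and hence to $X$. The only points worth flagging are that the conclusion $G\subset\GL(T_x)$ presupposes the action on $X$ is faithful, and that your last step (trivial on $\C(X)$ implies $g=\mathrm{id}$) uses that $X$ is irreducible and separated; both are covered by the paper's standing conventions that varieties are projective and normal and that $G$-varieties carry faithful actions.
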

Using the next assertion we can construct for any variety $X$ with a birational action of a finite group $G$
its birational model with a regular action of $G$.
\begin{proposition}[{see, for instance, \cite[Lemma-Definition 3.1]{Regularisation}}] \label{regularization}
  Assume that $X$ is a variety and $G$ is a finite subgroup in $\Bir(X)$. Then there exists a smooth $G$-variety $\widetilde{X}$
  and a $G$-equivariant birational map $\widetilde{X} \dashrightarrow X$.
\end{proposition}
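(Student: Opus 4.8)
The plan is to construct $\widetilde X$ in two stages: first pass to a (possibly singular) projective model on which $G$ acts by \emph{regular} automorphisms, and then invoke equivariant resolution of singularities to smooth this model while preserving the regularity of the action. Since the $G$-action inside $\Bir(X)$ depends only on the birational class of $X$, I would begin by replacing $X$ with any projective model of its function field $\C(X)$; thus I may assume $X$ is projective, and each $h\in G$ gives a birational self-map $h\colon X\dashrightarrow X$.

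For the first stage I would use the standard ``graph inside a product'' trick to turn the birational action into a regular one. Consider the product $P=\prod_{g\in G}X$ and the rational map
\[
 \phi\colon X\dashrightarrow P,\qquad \phi(x)=\bigl(g(x)\bigr)_{g\in G},
\]
defined on the dense open locus where all the maps $g$ are defined, and let $Z=\overline{\phi(X)}\subset P$ be the closure of its image. The group $G$ acts regularly on $P$ by permuting the factors: for $h\in G$ let $\sigma_h\colon P\to P$ be the automorphism determined by $(\sigma_h(y))_g=y_{gh}$. A direct check on the locus of definition shows $\sigma_h\circ\phi=\phi\circ h$, since both send a general point $x$ to $\bigl((gh)(x)\bigr)_{g\in G}$. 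Hence each $\sigma_h$ preserves the dense subset $\phi(X)$, and therefore its closure $Z$; restricting the $\sigma_h$ to $Z$ yields a faithful regular action of $G$ on the projective variety $Z$. Moreover the projection of $P$ onto the factor indexed by the identity element restricts to an inverse of $\phi$ on a dense open set, so $\phi$ is birational onto $Z$ and, by construction, $G$-equivariant. This produces a projective $G$-variety $Z$ with a regular action together with a $G$-equivariant birational map $Z\dashrightarrow X$.

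For the second stage I would apply Hironaka's equivariant resolution of singularities over $\C$ to $Z$: because $G$ is finite and acts regularly, functoriality of resolution makes the resolution automatically $G$-equivariant, producing a smooth projective variety $\widetilde X$ together with a $G$-equivariant proper birational morphism $\widetilde X\to Z$. Composing with $Z\dashrightarrow X$ gives the required $G$-equivariant birational map $\widetilde X\dashrightarrow X$ with $\widetilde X$ smooth and the $G$-action regular. The genuinely substantive point is the first stage, i.e.\ replacing the birational action by a regular one; the crux there is the identity $\sigma_h\circ\phi=\phi\circ h$ and the resulting $\sigma_h$-invariance of $Z$. The resolution step is essentially a citation, and its $G$-equivariance is formal once the action is regular and $G$ is finite.
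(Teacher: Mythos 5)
Your argument is correct, and it is essentially the standard regularization proof: the paper itself does not prove this proposition but quotes it from \cite[Lemma-Definition 3.1]{Regularisation}, where the same two-stage scheme is used --- first a projective model with a biregular $G$-action (via the closure of the graph of the action, exactly your $Z\subset\prod_{g\in G}X$ with the key identity $\sigma_h\circ\phi=\phi\circ h$), then equivariant (functorial) resolution of singularities. So your proposal matches the intended proof; the only points to state explicitly are that $\phi$ is defined on the dense open set $\bigcap_{g\in G}\mathrm{dom}(g)$ (using finiteness of $G$) and that $\sigma_h(Z)\subseteq Z$ follows by continuity from the identity on the smaller dense locus $U\cap h^{-1}(U)$, both of which your sketch already indicates.
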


The next assertion describes the group of automorphisms of  Grassmannian.
\begin{proposition}[{\cite[Theorem I]{Aut_Gr}}]\label{GR}
 Consider complex vector space $V$ of dimension $n$. If~\mbox{$n \ne 2k$}, we have~\mbox{$\Aut(\Gr(k, V)) \cong\PGL(V)$.} 
 In case~$n=2k$ the group $\PGL(V)$ is a subgroup of $\Aut(\Gr(k, V))$ of index $2$.
\end{proposition}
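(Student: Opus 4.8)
The plan is to exhibit the known automorphisms, reduce an arbitrary automorphism to a projective-linear map on the Plücker space, and then classify those linear maps. First I would produce the maps on both sides of the claimed isomorphism. The group $\GL(V)$ acts on $\Gr(k,V)$ by $g\cdot W=g(W)$; scalars act trivially, so this yields an embedding $\PGL(V)\hookrightarrow\Aut(\Gr(k,V))$. Inside its Plücker embedding the Grassmannian carries two families of maximal linear subspaces, namely $\{W : A\subset W\}$ with $\dim A=k-1$ (of projective dimension $n-k$) and $\{W : W\subset B\}$ with $\dim B=k+1$ (of projective dimension $k$); elements of $\PGL(V)$ preserve each family separately. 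When $n=2k$ the two families have the same dimension, and the duality $\delta\colon W\mapsto W^{\perp}$, which identifies $\Gr(k,V)$ with $\Gr(k,V^{*})$, composed with a linear isomorphism $V^{*}\xrightarrow{\sim}V$, gives an automorphism interchanging the two families. Hence $\delta\notin\PGL(V)$, while $\delta$ normalises $\PGL(V)$ and $\delta^{2}\in\PGL(V)$, so $\langle\PGL(V),\delta\rangle$ contains $\PGL(V)$ as a subgroup of index $2$.

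Next I would reduce an arbitrary $\phi\in\Aut(\Gr(k,V))$ to a projective-linear map. Since $\Pic(\Gr(k,V))\cong\mathbb{Z}$ is generated by the ample Plücker bundle $L=\mathcal{O}(1)$, and $\phi^{*}$ preserves ampleness, we must have $\phi^{*}L\cong L$. Therefore $\phi$ acts linearly on $H^{0}(\Gr(k,V),L)=(\wedge^{k}V)^{*}$, and because the Plücker embedding $\Gr(k,V)\hookrightarrow\p(\wedge^{k}V)$ is given by the complete linear system $|L|$ and is projectively normal, $\phi$ extends to an element of $\PGL(\wedge^{k}V)$ preserving the Plücker image. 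Conversely every element of $\PGL(\wedge^{k}V)$ stabilising the Grassmannian restricts to an automorphism of it. Thus $\Aut(\Gr(k,V))$ is identified with the stabiliser of $\Gr(k,V)$ in $\PGL(\wedge^{k}V)$.

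It then remains to decide which projective-linear transformations preserve $\Gr(k,V)$, equivalently which linear automorphisms of $\wedge^{k}V$ send decomposable tensors to decomposable tensors up to scalar. Here I would use the incidence geometry: two points of $\Gr(k,V)$ span a line contained in the Grassmannian exactly when the corresponding subspaces meet in dimension $k-1$, and any projective-linear stabiliser of $\Gr(k,V)$ preserves this adjacency relation. A fundamental-theorem-of-projective-geometry argument then recovers the linear structure of $V$ from this relation, showing that such a transformation is induced by a linear automorphism of $V$, hence lies in $\PGL(V)$, except that in the self-dual case $n=2k$ it may preserve adjacency while swapping the two families, in which case it differs from an element of $\PGL(V)$ by the duality $\delta$. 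Because we already work with genuine $\mathbb{C}$-linear maps, no field automorphism intervenes. I expect this last step to be the main obstacle: it is the substance of Chow's theorem, and the delicate part is to reconstruct $V$ together with its two dual rulings purely from the adjacency structure, and to verify that the only transformation mixing the rulings is the duality available precisely when $k=n-k$.
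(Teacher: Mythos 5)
The paper does not prove this proposition at all: it is imported verbatim from Chow's theorem (the reference cited as Theorem I), so there is no internal proof to compare with. Your outline follows the standard route to that theorem, and its first two stages are correct and complete as sketched: since $\Pic(\Gr(k,V))\cong\mathbb{Z}$ with ample generator the Plücker bundle $L$, any automorphism $\phi$ satisfies $\phi^{*}L\cong L$, hence acts on $H^{0}(\Gr(k,V),L)\cong(\wedge^{k}V)^{*}$ and extends to the stabiliser of the Plücker image in $\PGL(\wedge^{k}V)$; and when $n=2k$ the duality $W\mapsto W^{\perp}$, composed with an identification $V^{*}\cong V$, does give an automorphism interchanging the two rulings, normalising $\PGL(V)$ with square inside it, so the index-$2$ extension exists.

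However, the core of the statement --- that every projective-linear transformation of $\p(\wedge^{k}V)$ preserving the locus of decomposable tensors is induced by $\PGL(V)$, or by $\PGL(V)$ composed with the duality when $n=2k$ --- is precisely the step you defer: you invoke ``a fundamental-theorem-of-projective-geometry argument'' without carrying it out, and you acknowledge it is the substance of Chow's theorem. That is a genuine gap, not a routine verification. What is easy is the dichotomy on the families: maximal linear subspaces of the Grassmannian lie in the two families of projective dimensions $n-k$ and $k$, a projective-linear stabiliser permutes them, and a dimension count forbids swapping unless $n=2k$. What is hard, and missing, is reconstructing the subspace lattice of $V$ from the family $\{W: A\subset W\}$, $\dim A=k-1$, so as to produce an actual element of $\PGL(V)$; this occupies the bulk of Chow's argument. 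One further caveat: the index-$2$ claim as stated (both by you and in the paper) fails for $(n,k)=(2,1)$, where $\Gr(1,V)\cong\p^{1}$ and the duality is itself linear (via $V\cong V^{*}\otimes\det V$), so $\Aut(\Gr(1,V))=\PGL(V)$; your adjacency argument also degenerates there, since any two points of $\p^{1}$ trivially satisfy the incidence relation. This edge case is harmless for the paper, which applies the proposition only to $\Gr(2,5)$.
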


\subsection{Number of generators of $3$-groups}
We describe an example of the action of a $3$-group on a surface which does not 
fix any point. This is a reason why we can not use the same strategy as in~\cite{p-subgroups}.
Here by $\H_3$ we denote the Heisenberg group, it is described in details in Appendix \ref{appendix}

 \begin{example}\label{no_fixed_points_ab}
 Consider the standard action of the Heisenberg group $\H_3$ on the $3$-dimensional vector space $V$.
 This action induces the faithful action of the group $\mathrm{C}_3\times\mathrm{C}_3$ on the projective 
 plane $\p(V)\cong\p^2$. If $x$ is fixed by this action, then the corresponding linear subspace $W$ of $V$
 is invariant under the action of $\H_3$. However, this contradicts to the fact that the action 
 of $\H_3$ on $V$ induces an irreducible representation.
 \end{example}
The next example shows that the bound in Theorem \ref{main_theorem_dim2} is sharp.
 \begin{example}\label{ex_dim=2}
 Fix a projectivization $\p^3\cong \p(V)$ of the vector space $V$ with coordinates $x_1,\ x_2,\ x_3$ and~$x_4$.
 Consider the following action on $\p(V)$ of the abelian $3$-group~$(\mathrm{C}_3)^{3}$ with generators $\gamma_1,\ \gamma_2$ and~$\gamma_3$.
 Each $\gamma_i$ acts non-trivially only on $x_i$ by multiplication by the primitive cube root of unity;
 and $\gamma_i\cdot x_j = x_j$ for all $i\ne j$. This action is 
 faithful and it preserves the Fermat cubic surface $S$ in $\p^3$:
 \begin{equation*}
  S=\{(x_1:x_2:x_3:x_4)\in\p^3|\ x_1^3+x_2^3+x_3^3+x_4^3 =0  \}.
 \end{equation*}
 Thus, the $3$-group $\mathrm{C}_3^{3}$ acts faithfully
 on a rational surface $S$, though it can not be generated by less than $3$ elements. Therefore, it is a subgroup of the group $\Bir(\p^2)$.
\end{example}
The following example describes the action of a $3$-group that can not be generated by less than $4$ elements on a rationally connected threefold.
\begin{example}\label{ex_dim=3}
 Consider a product $X=\p^1\times S$ of $\p^1$ and a Fermat cubic surface. It is a rational threefold with a faithful
 action of a $3$-group~$(\mathrm{C}_3)^4 \cong (\mathrm{C}_3)^3\times \mathrm{C}_3$. The group  $(\mathrm{C}_3)^3$ acts
 on $S$ as in Example \ref{ex_dim=2}; the action of the group $\mathrm{C}_3$ is standard. Thus, the 
 group $(\mathrm{C}_3)^4$ is a subgroup in~$\Cr_3(\mathbb{C})$, though it can not be generated by less than 4 elements.
\end{example}

\section{Preliminaries}
In this section we prove Theorem \ref{main_theorem_dim2} and give assertions that happen to 
be useful in study of birational automorphisms of rationally connected threefolds. In particular,
we estimate the number of generators of $3$-subgroups in groups of automorphisms of several
concrete curves and surfaces.

\subsection{Proof of Theorem \ref{main_theorem_dim2}}
We start with the assertion connecting ranks of abelian~\mbox{$p$-sub}\-group in a group of birational
automorphisms of a variety and numbers of generators of a non-abelian $p$-subgroup there.
\begin{proposition}\label{prop_el_to_all}
 Assume that for all rationally connected varieties $Y$ of dimension $n$ and any abelian $p$-subgroup
 $A\subset \Bir(Y)$ we know that the rank of $A$ is less than or equal to $r$. If $X$ is rationally
 connected and of dimension $n$, then any $p$-subgroup $G\subset \Bir(X)$ (in particular, non-abelian) can 
 be generated by $r$ elements.
\end{proposition}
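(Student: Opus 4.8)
The plan is to reduce the assertion to a purely group-theoretic bound on the minimal number of generators of $G$, and then to realise that number as the rank of a suitable \emph{abelian} $p$-subgroup, to which the hypothesis applies. Recall that for a finite $p$-group $G$ the Burnside basis theorem identifies the minimal number of generators $d(G)$ with $\dim_{\mathbb{F}_p} G/\Phi(G)$, where $\Phi(G)$ is the Frattini subgroup; moreover $\bar G := G/\Phi(G)$ is elementary abelian of rank $d(G)$. Thus it suffices to exhibit a rationally connected variety $Y$ of dimension $n$ together with a faithful action of $\bar G$ by birational automorphisms, for then the hypothesis forces $\operatorname{rank}(\bar G)=d(G)\leqslant r$, and hence $G$ is generated by $r$ elements.

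To build such a $Y$, I would first apply Proposition~\ref{regularization} to replace $X$ by a smooth projective $G$-variety on which $G$ acts by regular automorphisms; this variety is again rationally connected of dimension $n$. Since $\Phi(G)$ is characteristic, hence normal, in $G$, the quotient $Y:=X/\Phi(G)$ is a projective normal variety of dimension $n$; it is rationally connected because a finite quotient of a rationally connected variety is again rationally connected, and the residual group $\bar G=G/\Phi(G)$ acts on $Y$ by regular automorphisms.

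The crux is to check that this action of $\bar G$ on $Y$ is \emph{faithful}; this is the step I expect to be the main obstacle, since a priori an element of $G\smallsetminus\Phi(G)$ could preserve every $\Phi(G)$-orbit. Suppose $g\in G$ acts trivially on $Y$, and set $H:=\langle\Phi(G),g\rangle$. Then $\pi(gx)=\pi(x)$ for the quotient map $\pi\colon X\to Y$ and general $x$, so $gx\in\Phi(G)\cdot x$; using that $\Phi(G)$ is normal in $G$ this gives $H\cdot x=\Phi(G)\cdot x$ for general $x$. Because $G$, and hence $H$, acts faithfully on the irreducible variety $X$, the fixed locus of each nontrivial element is a proper closed subset, so the general point has trivial stabiliser under both $H$ and $\Phi(G)$; comparing orbit sizes then yields $|H|=|\Phi(G)|$, whence $g\in\Phi(G)$. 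Thus the kernel of the action of $\bar G$ on $Y$ is trivial.

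With faithfulness established, $\bar G$ is an abelian $p$-subgroup of $\Bir(Y)$ for the rationally connected $n$-fold $Y$, so the standing hypothesis gives $d(G)=\operatorname{rank}(\bar G)\leqslant r$, and the Burnside basis theorem shows that $G$ can be generated by $r$ elements, as claimed.
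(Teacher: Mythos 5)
Your proof follows essentially the same route as the paper: regularize via Proposition~\ref{regularization}, pass to the quotient $Y=\widetilde{X}/\Phi(G)$ by the Frattini subgroup, apply the rank hypothesis to the abelian group $G/\Phi(G)\subset\Bir(Y)$, and conclude with the Burnside basis theorem. The only difference is that you explicitly verify faithfulness of the induced action on $Y$ via the orbit-size comparison at a general point with trivial stabiliser --- a correct argument for a step that the paper's proof leaves implicit.
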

\begin{proof}
 By Proposition \ref{regularization} there exists a birational model  $\widetilde{X}$  of $X$ with a regular action of $G$.
 Consider a Frattini subgroup $\Phi(G)\subset G$ the intersection of all maximal subgroups of $G$. The quotient group~\mbox{$A=G/\Phi(G)$}
 is an abelian group and it acts on a quotient variety $Y = \widetilde{X}/\Phi(G)$. This quotient variety is rationally connected and 
 of dimension $n$; thus, the rank of the group $A$ is less than or equal to~$r$. Therefore, by the Burnside 
 theorem \cite[Theorem 12.2.1]{Burnside_thm} the group $G$ can be generated by~$r$ elements.
\end{proof}
Now conclude from this fact Theorem \ref{main_theorem_dim2}.
\begin{proof}[Proof of Theorem \ref{main_theorem_dim2}]
Theorem \ref{thm_beau} gives us the bound on the rank of abelian $3$-groups acting on rational surfaces.
Proposition \ref{prop_el_to_all} implies the result.
\end{proof}
\begin{remark}\label{rmk_proof_main_thm}
 Theorem \ref{thm_prokh} gives us the bound on ranks of abelian $3$-subgroups acting on rationally connected threefolds. 
 Thus, Proposition \ref{prop_el_to_all} implies that any $3$-subgroup $G\subset \Bir(X)$ can be generated by $5$ elements.
\end{remark}
 
\subsection{Curves and surfaces}
In this work we need to estimate $p$-subgroups of the group of automorphisms of a curve $C$ of the genus $g$ greater than 1.
The Riemann–Hurwitz formula shows that the order of such a group is not greater than $84(g-1)$. However, this bound is not
sufficient in some cases. Since all $p$-groups are nilpotent; the result \cite[Theorem 1.2]{Aut_of_curves} implies a better bound for the
order of a $p$-subgroup $G\subset\Aut(C)$:
\begin{equation} \label{eq_estimation_on_curve}
 |G|\leqslant 16(g-1).   
\end{equation}
Using this formula we can study $3$-groups acting on the Jacobian of a curve.
\begin{lemma}\label{lemma_jac}
 Assume that $\mathrm{Jac}(C)$ is the Jacobian of a curve $C$ of genus $2$ and $G$ is a $3$-group with a faithful action
 on $\mathrm{Jac}(C)$ preserving its polarization. Then $G$ can be generated by $2$ elements.
\end{lemma}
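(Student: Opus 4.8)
The plan is to push the problem from the abelian surface $\mathrm{Jac}(C)$ back onto the curve $C$ via the Torelli theorem, and then to feed the result into the order bound \eqref{eq_estimation_on_curve}. First I would recall that the automorphisms of the principally polarized abelian variety $(\mathrm{Jac}(C),\Theta)$ that fix the origin and preserve the class of $\Theta$ are governed by $\Aut(C)$. Since $C$ has genus $2$ it is hyperelliptic, and its hyperelliptic involution acts on $\mathrm{Jac}(C)$ as multiplication by $-1$. Consequently the involution $-1$, which always preserves the polarization, already lies in the image of the natural map $\Aut(C)\to\Aut(\mathrm{Jac}(C),\Theta)$, so this map is an isomorphism and no spurious factor of order $2$ appears. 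I would therefore identify $G$ with a $3$-subgroup of $\Aut(C)$.

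Next I would apply the curve bound. Since $g=2$, any $3$-subgroup of $\Aut(C)$ has order at most $16(g-1)=16$ by \eqref{eq_estimation_on_curve}. As $G$ is a $3$-group, its order is a power of $3$ not exceeding $16$, whence $|G|\in\{1,3,9\}$. Finally I would note that every group of order dividing $9$ is generated by at most two elements: the groups of order $1$ and $3$ are trivial or cyclic, while the two groups of order $9$ are $\mathrm{C}_9$ and $\mathrm{C}_3\times\mathrm{C}_3$, needing one and two generators respectively. This yields the asserted bound.

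The main obstacle is the first step: I must be certain that the polarization-preserving automorphisms of $\mathrm{Jac}(C)$ constitute exactly the finite group $\Aut(C)$, and do not secretly contain nontrivial translations (a $3$-group \emph{can} act on an abelian variety by translations with no fixed point, so a naive averaging argument would fail here). This is precisely where principality of the polarization intervenes: the map $\phi_\Theta\colon\mathrm{Jac}(C)\to\widehat{\mathrm{Jac}(C)}$ attached to $\Theta$ is an isomorphism, so its kernel $K(\Theta)$ is trivial, and by the theorem of the square the only translation $t_a$ with $t_a^{*}\mathcal{O}(\Theta)\cong\mathcal{O}(\Theta)$ is the identity. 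Hence every polarization-preserving automorphism fixes the origin, and the Torelli identification $\Aut(\mathrm{Jac}(C),\Theta)\cong\Aut(C)$ applies to all of $G$.

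Once this reduction is in place the argument is purely the elementary order count above, so I expect the write-up to be short: the substance is concentrated entirely in the Torelli step and the principality remark that legitimizes it.
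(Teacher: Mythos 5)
Your proposal is correct and takes essentially the same route as the paper: reduce via Torelli to a $3$-subgroup of $\Aut(C)$, apply the bound \eqref{eq_estimation_on_curve} to get $|G|\leqslant 16(g-1)=16$, hence $|G|\leqslant 9$ and $G$ needs at most two generators. Your two refinements --- that for the hyperelliptic genus-$2$ curve the involution $-1$ is already induced by $\Aut(C)$, and that principality of $\Theta$ (via $K(\Theta)=0$ and the theorem of the square) rules out translations --- are points the paper leaves implicit, as it simply invokes Torelli to embed the polarization-preserving automorphisms into $\mathbb{Z}/2\mathbb{Z}\times\Aut(C)$ and discards the $2$-part because $G$ is a $3$-group.
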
 
\begin{proof}
 By Torelli theorem for curves automorphisms of $\mathrm{Jac}(C)$ preserving its polarization 
 form a subgroup of the group $\mathbb{Z}/2\mathbb{Z}\times \Aut(C)$. Thus, the group $G$ is a subgroup
 of~$\Aut(C)$. By~\eqref{eq_estimation_on_curve} we get
 \begin{equation*}
    |G|\leqslant|\Aut(C)|\leqslant 16 (g(C)-1) = 16.
 \end{equation*}
 Since the order $|G|$ is a power of $3$, by Lemma \ref{subgroup_of_small_index} we conclude that $G$ can be 
 generated by two elements.
\end{proof}
Now we use the formula \eqref{eq_estimation_on_curve} for a fiber space over a curve.
\begin{lemma}\label{lemma_proj_over_curve}
 Assume that $S$ is a projectivization of a vector bundle of rank $2$ over a curve $C$ of genus~$3$ and  $G$ is 
 a $3$-subgroup in $\Aut(S)$. Then $G$ can be generated by $4$ elements.
\end{lemma}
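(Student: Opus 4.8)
The plan is to exhibit the ruling $\pi\colon S\to C$ as a $G$-fiber space and then read off the bound from Lemma \ref{lemma_Gbundle}. First I would check that the $3$-group $G$ preserves $\pi$. Since $C$ has genus $3\geqslant 1$, the ruling is the unique $\p^1$-fibration on $S$: one has $\pi_*\mathcal{O}_S=\mathcal{O}_C$ and $R^1\pi_*\mathcal{O}_S=0$, so the Albanese variety of $S$ is $\mathrm{Jac}(C)$ and the Albanese map factors as $S\xrightarrow{\pi}C\hookrightarrow\mathrm{Jac}(C)$; the fibers of $\pi$ are exactly the fibers of the Albanese map. As the Albanese formation is functorial, every automorphism of $S$ permutes the fibers of $\pi$ and therefore descends to an automorphism of $C$. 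Hence $\pi\colon S\to C$ is a $G$-fiber space with general fiber $F\cong\p^1$ and base $B=C$, and Lemma \ref{lemma_Gbundle} applies.

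The remaining task is to bound the number of generators of $3$-subgroups of $\Aut(F)$ and of $\Aut(B)$. For the fiber, $\Aut(\p^1)\cong\PGL_2(\C)$, whose finite subgroups are cyclic, dihedral, $\A_4$, $\SG_4$ or $\A_5$; the Sylow $3$-subgroup of each of these is cyclic, so every $3$-subgroup of $\Aut(F)$ is cyclic and is generated by a single element. For the base, any $3$-subgroup $H\subset\Aut(C)$ satisfies, by \eqref{eq_estimation_on_curve},
\begin{equation*}
 |H|\leqslant 16\bigl(g(C)-1\bigr)=32,
\end{equation*}
so $|H|$ is a power of $3$ not exceeding $27=3^3$. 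A $3$-group of order at most $3^3$ is generated by at most $3$ elements, since its minimal number of generators equals $\dim_{\mathbb{F}_3}\bigl(H/\Phi(H)\bigr)\leqslant 3$; this is the same kind of order estimate used in the proof of Lemma \ref{lemma_jac}. Thus every $3$-subgroup of $\Aut(B)$ is generated by at most $3$ elements.

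Putting these together, Lemma \ref{lemma_Gbundle} with $n=1$ and $m=3$ shows that $G$ can be generated by $1+3=4$ elements, as claimed. The main obstacle is the first step: one must be confident that $G$ genuinely preserves the ruling so that the extension \eqref{base-fiber_es} is available; once the canonicity of the ruling over a curve of positive genus is established, the two bounds $n=1$ and $m=3$ are routine, and the only quantitative input is the curve estimate \eqref{eq_estimation_on_curve} specialized to genus $3$.
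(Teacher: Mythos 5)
Your proof is correct and follows essentially the same route as the paper: view the ruling $\pi\colon S\to C$ as a $G$-fiber space, bound $3$-subgroups of $\Aut(C)$ by $3$ generators via $|H|\leqslant 16(g-1)=32$ from \eqref{eq_estimation_on_curve}, note that $3$-subgroups of $\Aut(\p^1)$ are cyclic, and combine with Lemma \ref{lemma_Gbundle} to get $1+3=4$. The only differences are cosmetic: you justify the canonicity of the ruling via the Albanese map (the paper simply asserts that every automorphism commutes with the projection, which indeed holds since $g(C)\geqslant 1$) and you derive cyclicity of $3$-subgroups of $\PGL_2(\C)$ from the classification of finite subgroups rather than citing Lemma \ref{gl2}.
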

\begin{proof}
 Any automorphism of such surface commutes with the projection to $C$. By \eqref{eq_estimation_on_curve} any
 $3$-subgroup of $\Aut(C)$ has an order less than or equal to $32$; thus, it can be generated by 3 elements. 
 The fiber of the projection to $C$ is isomorphic to a projective line; thus, by Lemma \ref{gl2} any $3$-group
 with a faithful action on fiber is cyclic. By Lemma \ref{lemma_Gbundle} this implies that $G$ can be generated
 by 4 elements.
\end{proof}

\section{Automorphisms of $3$-dimensional Mori fiber spaces}
\subsection{Preliminaries}
We call $X$  a \emph{$\GQ$-factorial variety}, if it is a $G$ variety and any $G$-invariant Weil divisor is a $\Q$-Cartier divisor,
i.e. its power is a Cartier divisor. The terminal $G$-variety $X$ is a \emph{$\GQ$-Fano variety}, if it is $\GQ$-factorial , the anticanonical
class $-K_X$ is ample and the rank of the group $\Pic(X)^G$ equals 1. 
  
We study groups of regular automorphisms of minimal terminal $G$-Mori fiber spaces of dimension $3$. 
They are either $\GQ$ Fano threefolds or $G$-Mori fiber spaces~\mbox{$X\to B$} such that $\dim(B)>0$, the
class $-K_{X/B}$ is relatively ample and the rank of the relative $G$-invariant Picard group~$\Pic(X/B)^G$ equals 1.

In the following assertion we are describe $3$-subgroups with a regular action on a $G$-Mori fiber space.
\begin{lemma}\label{lemma_mb}
  If $X\to B$ is a $G$-Mori fiber space of dimension $3$ and $\dim(B)>0$, then a $3$-sub\-group~$G$ in $\Aut(X)$ 
  can be generated by $4$ elements.
\end{lemma}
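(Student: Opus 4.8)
The plan is to exploit the fibration structure directly through the group extension \eqref{base-fiber_es} and Lemma~\ref{lemma_Gbundle}, reducing the bound on $G$ to separate bounds for the $3$-subgroups acting on the general fiber $F$ and on the base $B$. Since $X$ has dimension $3$ and $\dim(B)>0$, only two cases occur: $\dim(B)=1$ and $\dim(B)=2$, where the general fiber is a surface in the first case and a curve in the second.

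First I would record that both $B$ and the general fiber $F$ are rational. Because $X$ is rationally connected, its image $B$ under the Mori fibration is rationally connected, so $B\cong\p^1$ when $\dim(B)=1$, and $B$ is a rational surface when $\dim(B)=2$. On the other hand, since $-K_{X/B}$ is relatively ample and the relative $G$-invariant Picard rank is $1$, the general fiber $F$ is a Fano variety: when $\dim(B)=2$ it is a smooth rational curve $\p^1$, and when $\dim(B)=1$ it is a del Pezzo surface, which is again rational. In particular, whenever $F$ or $B$ is a surface its automorphism group embeds into $\Cr_2(\C)=\Bir(\p^2)$, and whenever $F$ or $B$ is a curve it is $\p^1$ with $\Aut(\p^1)=\PGL_2(\C)$.

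Next I would apply the two generation bounds. By Theorem~\ref{main_theorem_dim2}, any $3$-subgroup of $\Cr_2(\C)$ can be generated by $3$ elements, so a $3$-subgroup acting on a rational surface needs at most $3$ generators; by Lemma~\ref{gl2}, any $3$-group acting faithfully on $\p^1$ is cyclic, hence needs at most one generator. Combining these through Lemma~\ref{lemma_Gbundle}: in the case $\dim(B)=2$ the fiber contributes at most $1$ generator and the base at most $3$, giving $1+3=4$; in the case $\dim(B)=1$ the base contributes at most $1$ and the del Pezzo fiber at most $3$, again giving $3+1=4$. In either case $G$ can be generated by $4$ elements.

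The main point to verify carefully is the rational connectedness of $B$ together with the identification of the general fiber, since this is exactly what licenses the use of the sharp rank-$2$ Cremona bound of Theorem~\ref{main_theorem_dim2}. Once this is in place, Lemma~\ref{lemma_Gbundle} handles all the bookkeeping, and no delicate analysis of the singular fibers is needed: passing to the full birational automorphism group of the rational base or fiber absorbs any singularities, so the bounds depend only on the dimension and rationality of $B$ and $F$.
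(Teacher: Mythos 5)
Your proof is correct and takes essentially the same route as the paper: the paper's (two-line) proof likewise observes that the base and the general fiber are rationally connected and combines Lemma~\ref{lemma_Gbundle} with the bounds of Theorem~\ref{main_theorem_dim2} (at most $3$ generators for a $3$-group on a rational surface) and Lemma~\ref{gl2} (cyclic on $\p^1$) to obtain $3+1=4$. Your explicit case split by $\dim(B)$ and the identification of the fiber/base as $\p^1$ or a rational surface just spell out the bookkeeping the paper leaves implicit.
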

\begin{proof}
   The base $B$ is rationally connected as well as the general fiber of the map to $B$. Thus, by Lemma \ref{lemma_Gbundle} and the bounds
   in Theorem \ref{main_theorem_dim2} and Lamma \ref{gl2} we get the result.
\end{proof}

Our next goal is to study $3$-subgroups in groups of regular automorphisms of $\GQ$-Fano threefolds.
Let us introduce are some important invariants of smooth and Gorenstein Fano varieties.
The \emph{index} of a Gorenstein Fano variety $X$ is a maximal number $r$ such that there exists an element $H$ 
of the Picard group of $X$ such that
 \begin{equation*}
  - K_X \sim rH.
 \end{equation*}
Any automorphism $f$ of $X$ preserves the line bundle~$\mathcal{O}(H)$ since~\mbox{$f^*K_X \sim K_X$} and the group
$\Pic(X)$ is torsion-free by \cite[Proposition 2.1.2]{FANO}. 
 
Another invariant of Fano threefold of index 1 is its \emph{genus}:
\begin{equation*}
  g = \dim(H^0(X, \mathcal{O}(H)))-2.
\end{equation*}
Since $X$ is a Fano variety, the line bundle $\mathcal{O}(H)$ is ample and its linear system induces the following map:
\begin{equation*}
 \phi_{|H|}: X \dashrightarrow \p(H^0(X, \mathcal{O}(H)))^{\vee} \cong \p^{g+1}.
\end{equation*}

We call $X$ a \emph{del Pezzo threefold of degree $d$} if $X$ is a terminal Gorenstein Fano threefold of index $2$ 
and $H^3= d$.
\begin{proposition}[{\cite[Table 12.4]{FANO}}]
Smooth Fano threefold with Picard number $1$ are classified:
 \ 
\begin{enumerate}
   \item[$1.$] If $r=1$, there are ten families of smooth Fano threefolds of genera  $2,\dots,10$ and $12$;
   \item[$2.$] if $r=2$, there are ten families of smooth Fano threefolds; namely, del Pezzo threefolds of degrees $1,\dots,5$;
   \item[$3.$] if $r=3$ there is a unique family of smooth Fano threefolds: namely, smooth quadric hypersurfaces in $\p^4$;
   \item[$4.$] if $r=4$ there is a unique smooth Fano threefold: $\p^3$.
  \end{enumerate}
\end{proposition}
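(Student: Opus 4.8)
The plan is to organize the classification by the Fano index $r$, which by the Kobayashi--Ochiai theorem satisfies $r \leqslant \dim X + 1 = 4$, and to treat the possible values in decreasing order. The two extremal cases are essentially formal. If $r = 4$, then the characterization of projective space as the unique $n$-dimensional Fano manifold of index $n+1$ forces $X \cong \p^3$ (with $H = \mathcal{O}(1)$ and $H^3 = 1$), giving a single variety. If $r = 3$, the analogous characterization of quadrics identifies $X$ with a smooth quadric hypersurface $Q \subset \p^4$ and $H$ with its hyperplane class, yielding a single family. Both assertions follow directly once the index bound and these two characterization theorems are in hand.

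For $r = 2$ the variety is a del Pezzo threefold, so $-K_X \sim 2H$ with $H$ ample and primitive. Setting $d = H^3$, adjunction shows that a general member $S \in |H|$ satisfies $-K_S \sim H|_S$ and $(-K_S)^2 = H^3 = d$, so $S$ is a del Pezzo surface of degree $d$; I would then invoke Fujita's theory of del Pezzo manifolds to pin down the possible $X$. Imposing $\rho(X) = 1$ restricts the degree to $1 \leqslant d \leqslant 5$: the values $d = 6, 7$ occur only for varieties of larger Picard number, while $d = 8$ is merely a reembedding of $\p^3$ (which has index $4$, not $2$). The degrees $d = 1, 2, 3, 4, 5$ are then realized respectively by a sextic in $\p(1,1,1,2,3)$, a double cover of $\p^3$, a cubic threefold in $\p^4$, a complete intersection of two quadrics in $\p^5$, and a linear section of $\Gr(2,5)$, and these exhaust the possibilities. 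The technical core here is the analysis of the linear system $|H|$ and of its general surface section, which is classical and well understood.

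The genuinely difficult case is $r = 1$, where $-K_X \sim H$ and the governing invariant is the genus, related to the degree by $g = \frac{1}{2} H^3 + 1$, so that $\dim |{-K_X}| = g+1$ and $\phi_{|H|}$ maps $X$ into $\p^{g+1}$. The main obstacle is to control this anticanonical map: one must first show that $|{-K_X}|$ is base-point free and, outside the hyperelliptic and trigonal exceptions, very ample, and then study the anticanonically embedded model $X \subset \p^{g+1}$. I would follow Iskovskikh's strategy of cutting $X$ with general members of $|{-K_X}|$ --- obtaining a $K3$ surface and then a canonical curve --- and applying Riemann--Roch together with vanishing theorems to the resulting sections in order to bound the genus by $g \leqslant 12$ and to exclude $g = 11$; for the high-genus range $7 \leqslant g \leqslant 10$ and $g = 12$ I would supplement this with Mukai's uniform description via extremal vector bundles and sections of homogeneous spaces. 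Matching each surviving value $g \in \{2, \dots, 10, 12\}$ with a concrete family then completes the ten cases. The very-ampleness statement and the exclusion of $g = 11$ are the delicate points, and they are the reason this case is by far the deepest part of the classification.
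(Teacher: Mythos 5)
Your outline is essentially correct, but note that the paper does not prove this proposition at all: it is quoted verbatim from the classification tables in the literature (\cite[Table 12.4]{FANO}, i.e.\ the Iskovskikh--Prokhorov compilation), and the paper only uses it as a case list for the arguments that follow. What you have written is a faithful reconstruction of the standard proof behind that table: the Kobayashi--Ochiai bound $r\leqslant 4$ with the characterizations of $\p^3$ and of the quadric in the extremal cases $r=4,3$; Fujita's theory of del Pezzo manifolds for $r=2$, where $\rho(X)=1$ correctly cuts the degree down to $1\leqslant d\leqslant 5$ with the five models you list; and for $r=1$ the Iskovskikh--Shokurov analysis of $\phi_{|-K_X|}$ (base-point freeness, the hyperelliptic and trigonal special cases, cutting down to a $K3$ surface and a canonical curve) together with the bound $g\leqslant 12$, the exclusion of $g=11$, and Mukai's vector-bundle description in the range $7\leqslant g\leqslant 10$ and $g=12$. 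One small imprecision: very ampleness of $-K_X$ fails only in the hyperelliptic cases (e.g.\ $g=2$, and the double cover of the quadric with $g=3$); the trigonal condition does not obstruct very ampleness but rather the property that the anticanonical image is an intersection of quadrics, which is the relevant dichotomy at the next stage of Iskovskikh's argument. Since the paper supplies no argument of its own, there is nothing to compare beyond this: your sketch is the proof the cited source gives, correctly attributed to its deepest points (very ampleness and the exclusion of $g=11$).
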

The full classification of smooth Fano threefolds and there invariants (Picard ranks and Hodge numbers $h^{1,2}$) are given in tables.

\subsection{Singular Fano varieties}
 
In dimension three the result of minimal model program is a variety with terminal singularities. 
We need the following assertion.
\begin{theorem}[{\cite[Proposition 3.6]{YPG}}]\label{terminal_singularities}
   Any terminal singularity $x\in X$ of dimension $3$ is locally isomorphic to $Y/\Gamma$, where $\Gamma\cong \mathrm{C}_r$ acts on $Y$ freely
   outside the point $y$. The point $y$ is a $cDV$-singularity or a smooth point (thus, it is a Gorenstein singularity).
   Variety $Y$ in this case is a projectivization of a canonically constructed sheaf of algebras on an open subset of $X$.
\end{theorem}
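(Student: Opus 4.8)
The plan is to reduce the arbitrary terminal case to the Gorenstein one by passing to the index-one (canonical) cover, and then to invoke the classification of Gorenstein terminal threefold singularities. Since $x\in X$ is terminal, the class $K_X$ is $\Q$-Cartier; let $r$ be the least positive integer such that $rK_X$ is Cartier near $x$ (the local index). First I would construct the canonical cover. Choosing a local trivialization of $\mathcal{O}_X(rK_X)$, one equips the sheaf of $\mathcal{O}_X$-modules $\mathcal{A}=\bigoplus_{i=0}^{r-1}\mathcal{O}_X(iK_X)$ with an $\mathcal{O}_X$-algebra structure and sets $Y=\operatorname{Spec}_X\mathcal{A}$, with the natural finite morphism $\pi\colon Y\to X$. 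By construction the cyclic group $\Gamma\cong\mathrm{C}_r$ acts on $Y$ through the grading, with quotient $Y/\Gamma\cong X$; this realizes the canonically constructed sheaf of algebras referred to in the statement, with $Y$ its relative spectrum.

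Next I would check that $\Gamma$ acts freely away from $y=\pi^{-1}(x)$. A three-dimensional terminal singularity is isolated (terminal singularities are smooth in codimension two), so $K_X$ is Cartier on $X\setminus\{x\}$; hence each summand $\mathcal{O}_X(iK_X)$ is invertible there and $\pi$ is étale over $X\setminus\{x\}$, so $\Gamma$ acts freely outside $y$. Moreover the cover trivializes the torsion: one gets $K_Y=\pi^*K_X$, which is now Cartier, so $Y$ is Gorenstein. Because $\pi$ is crepant and étale in codimension one, discrepancies are preserved and terminality of $X$ forces $Y$ to be terminal (or $y$ to be smooth). Thus $y\in Y$ is a Gorenstein terminal, equivalently canonical of index one, threefold singularity.

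The decisive step is the classification of such singularities. Here I would use Reid's characterization: a Gorenstein terminal threefold singularity is precisely an isolated compound Du Val ($cDV$) point, namely a hypersurface singularity a general hyperplane section of which is a rational double point. In particular $y$ is a $cDV$-singularity or a smooth point, as claimed. The main obstacle is exactly this last ingredient. It requires, on the one hand, showing that $y\in Y$ is a hypersurface singularity of embedding codimension one, and on the other hand an inversion-of-adjunction type argument relating terminality of the threefold $Y$ to the Du Val property of a general hyperplane section through $y$. Both rest on the finer theory of canonical singularities rather than on formal manipulation, which is why the assertion is quoted from \cite{YPG} rather than reproved here.
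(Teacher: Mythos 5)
The paper gives no proof of this statement --- it is quoted directly from \cite[Proposition 3.6]{YPG} --- and your reconstruction is precisely the argument of that source: the index-one cover $Y=\operatorname{Spec}_X\bigoplus_{i=0}^{r-1}\mathcal{O}_X(iK_X)$ with its $\mathrm{C}_r$-action, free away from $y$ because a three-dimensional terminal point is isolated so $\pi$ is \'etale over the punctured neighborhood, $K_Y=\pi^*K_X$ Cartier and crepant so $y$ is Gorenstein terminal, and then Reid's classification of Gorenstein terminal threefold points as isolated $cDV$ points, which you correctly flag as the one genuinely deep ingredient and appropriately defer to \cite{YPG} rather than reprove. One small point in your favor: your relative $\operatorname{Spec}$ of the canonically constructed sheaf of algebras is the right construction, whereas the paper's phrase ``projectivization'' of that sheaf is a slip of wording.
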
	
  
The next assertion describes groups preserving a terminal point on a threefold $X$.
\begin{lemma}[{c.f. \cite[Lemma 2.4]{Prokh}}]\label{stabilizer_of_singularity}
  Assume that $X$ is a threefold with terminal singularities and~$G$ is a $3$-subgroup in $\Aut(X)$ preserving a point $x\in X$.
  Then $G$ can be generated by $3$ elements.
\end{lemma}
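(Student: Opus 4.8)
The plan is to reduce the problem to a linear one on the index-one cover of the singularity. By Theorem \ref{terminal_singularities} the germ $(X,x)$ is isomorphic to $(Y,y)/\Gamma$, where $\Gamma\cong\mathrm{C}_r$ acts freely on $Y\setminus\{y\}$ and $(Y,y)$ is either smooth or a $cDV$, hence hypersurface, singularity; in particular $\dim T_y\leqslant 4$. Since the sheaf of algebras producing $Y$ is canonically attached to $(X,x)$ by Theorem \ref{terminal_singularities}, every element of $G$, which fixes $x$, lifts to an automorphism of $Y$ fixing $y$, and the lifts form a group $\widetilde{G}$ sitting in an extension
\begin{equation*}
 1 \to \mathrm{C}_r \to \widetilde{G} \to G \to 1 .
\end{equation*}
Applying Proposition \ref{fixed_point} to the $\widetilde{G}$-action on $Y$ gives an embedding $\widetilde{G}\subset\GL(T_y)$, i.e. $\widetilde{G}\subset\GL_n(\C)$ with $n\leqslant 4$.

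Next I would descend from $\widetilde{G}$ to $G$ through a Sylow $3$-subgroup. Writing $|G|=3^a$ and $r=3^c r'$ with $3\nmid r'$, a Sylow $3$-subgroup $P\subset\widetilde{G}$ has order $3^{a+c}$, whereas $P\cap\mathrm{C}_r$ is a $3$-group of order at most $3^c$; hence the composition $P\hookrightarrow\widetilde{G}\twoheadrightarrow G$ is surjective, so $G$ is a quotient of $P$ and needs no more generators than $P$. It therefore suffices to bound the number of generators of the $3$-group $P\subset\GL_n(\C)$. If $(Y,y)$ is smooth then $n=3$, and the estimates of Appendix \ref{appendix} immediately give that $P$, and hence $G$, is generated by $3$ elements or fewer.

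The remaining, and main, case is when $(Y,y)$ is a genuine $cDV$ point, so that $n=4$. Here the bound for $\GL_4(\C)$ alone is too weak, because $\GL_4(\C)$ already contains the rank $4$ group $(\mathrm{C}_3)^4$; the extra input is that $P$ preserves the hypersurface germ $Y=\{f=0\}\subset T_y$. Since $P$ fixes $y$ and preserves $\{f=0\}$, it multiplies $f$ by a character and thus preserves each homogeneous part of $f$ up to that same character; in particular it preserves the nonzero quadratic tangent cone $\{f_2=0\}$, which forces $P$ into the similitude group of $f_2$. When $f_2$ is nondegenerate this already cuts the maximal rank of a $3$-subgroup down to $3$, and the linear-algebra assertions of Appendix \ref{appendix} finish the case. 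The hard part is the degenerate situation — precisely the $cD$ and $cE$ types, where $f_2$ has small rank and preservation of the quadric by itself still permits rank $4$: there one must additionally use the next semi-invariant parts $f_3,\dots$, combined with the lemmas of Appendix \ref{appendix}, to confine $P$ to a subgroup of $\GL_4(\C)$ whose $3$-subgroups are generated by at most $3$ elements. This case analysis, guided by the normal forms of $cDV$ singularities, is where the real work lies; once it is carried out, the quotient relation $G=P/(P\cap\mathrm{C}_r)$ gives that $G$ is generated by $3$ elements or fewer.
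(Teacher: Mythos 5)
Your reduction steps coincide with the paper's proof: the index-one cover $Y$ with the extension $1\to\mathrm{C}_r\to\widetilde{G}\to G\to 1$, the surjection of a Sylow $3$-subgroup of $\widetilde{G}$ onto $G$ (so that by Lemma \ref{generators_of_group_extension} it suffices to bound that subgroup), the embedding into $\GL(T_y)$ with $\dim T_y\leqslant 4$ by Proposition \ref{fixed_point} and \cite[Corollary 3.12]{YPG}, and the observation that each homogeneous part of $f$ is semi-invariant with one and the same character. But your proof stops exactly where the content of the lemma begins. The passage ``there one must additionally use the next semi-invariant parts $f_3,\dots$ \dots{} this case analysis \dots{} is where the real work lies; once it is carried out'' is a placeholder, not an argument, and the case it defers is not a marginal one: the quadratic part $f_2$ is degenerate for the $cA_n$ ($n\geqslant 2$), $cD$ and $cE$ normal forms, i.e.\ for almost all $cDV$ points. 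Worse, for $f_2$ of rank $\leqslant 2$ (e.g.\ $f_2=z_1z_2$ or $f_2=z_1^2$) the group of linear maps leaving $f_2$ semi-invariant contains the entire diagonal torus, hence a copy of $\mathrm{C}_3^4$; so in precisely the hard cases your similitude-group observation yields no bound whatsoever, and the whole burden falls on the omitted analysis. (A smaller quibble: even in the nondegenerate case the Appendix has no ready statement about similitude groups --- Lemma \ref{lemma_3gr_on_quadric} concerns automorphisms of the projective quadric, so a short bridge through the scalar kernel is still needed, though that is routine.)

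For comparison, the paper avoids your stratification by the rank of $f_2$ altogether and instead classifies the possible \emph{group} shapes: by Corollary \ref{corollary_3sgr_gl3_3generators} (combined with the decomposition in Proposition \ref{gl4}), a $3$-subgroup of $\GL_4(\C)$ that cannot be generated by $3$ elements either contains a diagonal $\mathrm{C}_3^4$, or is a product of a $3$-subgroup of a one-dimensional torus with the group generated by the monomial matrices \eqref{eq_matrix_in_3group} acting irreducibly on a $3$-dimensional summand. Each shape is then excluded in a few lines. In the first case one arranges that $z_1^2$ or $z_1z_2$ occurs in $f$; the generator acting only on $z_1$ then twists $f$ by a nontrivial character while fixing $z_2,z_3,z_4$, so every monomial of $f$ is divisible by $z_1$, contradicting irreducibility of the $cDV$ equation. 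In the second case, irreducibility of the Heisenberg-type action on $\langle z_2,z_3,z_4\rangle$ forces every semi-invariant quadric to be proportional to $z_1^2$, so $f_2\in\{0,\,z_1^2\}$; since a $cDV$ point has $f_2\ne 0$, one is left with the unique normal form in the list of \cite[Section 6]{YPG} with $f_2=z_1^2$, whose nonzero cubic part fails to be semi-invariant. If you want to complete your route, the fix is to replace your normal-form stratification with this classification of group shapes; as written, the proposal is missing its main step.
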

\begin{proof}
Now we assume that $x$ is not a smooth point. Then by Theorem \ref{terminal_singularities} there exists a variety $Y$
and a cyclic group $\Gamma$ such that $X$ is locally isomorphic to a quotient variety $Y/\Gamma$. Since the construction 
of~$Y$ is canonical, an extension $\widetilde{G}$  of $G$ acts on $Y$ and this action commutes with the projection to~$Y/\Gamma$.
Since the Sylow $3$-subgroup of $\widetilde{G}$ maps surjectively to $G$ by Lemma \ref{generators_of_group_extension} it suffices
to prove that $\widetilde{G}$ can be generated by $3$ elements. Thus, we reduct to the case $x$ is $cDV$-singularity.

If $x$ is a Gorenstein terminal point then by \cite[Corollary 3.12]{YPG} the tangent space $T_x$ is of dimension less 
or equal than $4$. Thus, by Proposition \ref{fixed_point} the group $G$ is a subgroup of $\GL_4(\mathbb{C})$ and by 
Proposition \ref{gl4} can be generated by $4$ elements.

Moreover, if $x$ is a smooth point, then $\dim(T_x)=3$ and $G$ can be generated by $3$ elements by Proposition \ref{gl4}.

Assume that $G$ can not be generated by less than $4$ elements. Then by Corollary \ref{corollary_3sgr_gl3_3generators}
it either contains a subgroup of the torus of $\GL_4(\mathbb{C})$ of  rank $4$ or it is a product of a 3-subgroup of a
one-dimensional toris and a group $H$ generated by matrices \eqref{eq_matrix_in_3group}.
  
In the first case replacing elements of $G$ by there degrees we can assume that $G\cong C_3\langle g_1,\dots,g_4\rangle$ is 
isomorphic to $\mathrm{C}_3^4$. 
Choose coordinates $z_1,z_2,z_3$ and $z_4$ of $T_x$ such that the action of $G$ is diagonal.
\begin{equation*}
   g_i\cdot z_j = \left\{ \begin{aligned}
                          \zeta z_i, &\text{ if }i=j;\\
                          z_j, &\text{ otherwise.}
                         \end{aligned}
  \right.
\end{equation*}
Here $\zeta$ is a cube root of unity. Point $x$ is a $cDV$-singularity, it is given by the equation $f(z_1,z_2,z_3,z_4)$
from the list \cite[Section 6]{YPG}. In particular, $f$ is an irreducible polynomial semi-invariant under elements $g_i$.
Moreover, $f$ has a non-trivial quadratic part.

After renumeration of the coordinates we can assume that $z_1^2$ or $z_1z_2$ has non-zero coefficient in~$f$. Then $g_1$ acts on $f$
with a non-trivial eigenvalue. Since on $z_j$ the element $g_1$ acts trivially, all monomials of $f$ are divizible by $z_1$. 
This contradicts to the fact that $f$ is irreducible.

If $G$ contains an element $g_1$ as before and elements $\sigma$ and $t$ preserving $z_1$ and on $z_2,z_3,z_4$ acting as
matrices \eqref{eq_matrix_in_3group}. Consider the quadratic part $f_2$ of $f$. It is semi-invariant under the action of $G$ 
Thus, it is either 0 or $z_1^2$. There is an only equation in the list \cite[Section 6]{YPG} with such a property and in this 
case the cubic part $f_3$ of $f$ is also non-trivial. However, such a polynomial can not be semi-invariant under our group action.
Thus, this situation is also impossible and $G$ can be generated by $3$ elements.
\end{proof}
 
\begin{remark}\label{rmk_stab_cA1}
Lemma \ref{stabilizer_of_singularity} shows that any $cDV$-singularity is not invariant under the faithful action 
of a $3$-group generated by $4$ elements. This result can not be improved: there exist $cDV$-singularities fixed 
by the action of the group $\mathrm{C}_3^3$. For instance, consider $cA_1$-singularity with an equation $z_1^2+z_2z_3+z_4^3$
in a four-dimensional space. It is invariant under the action of a group with following generators:
 \begin{equation*}
  C_3^3\cong \left\langle \mathrm{diag}(\zeta,\zeta^{-1},1,1), \mathrm{diag}(\zeta, 1,\zeta^{-1},1), \mathrm{diag}(1,1,1,\zeta)\right\rangle \subset \GL_4(\mathbb{C})
 \end{equation*}
\end{remark}
Lemma \ref{stabilizer_of_singularity} implies the following corollary
\begin{lemma}\label{lemma_less_27_singularities}
 Assume that $X$ is a terminal Fano threefold and $G$ is a $3$-subgroup of $\Aut(X)$. If the length of the $G$-orbit 
 of a point $x\in X$ equals $N<27$, then $G$ can be generated by $5$ elements. If~\mbox{$N<9$,} then $G$ can be generated by $4$ elements.
\end{lemma}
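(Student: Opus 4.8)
The plan is to translate the hypothesis on the orbit length into a statement about the \emph{index} of the stabilizer of $x$, and then to combine the generation bound for the stabilizer provided by Lemma~\ref{stabilizer_of_singularity} with a bound on how the minimal number of generators can grow when passing from a subgroup of small index up to the whole group. The key observation is that, in a $3$-group, an orbit length is automatically a power of $3$, so the inequalities $N<27$ and $N<9$ are much more rigid than they look.

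First I would set $H=\Stab_G(x)$ and apply the orbit--stabilizer theorem, which gives $N=[G:H]$. Since $G$ is a $3$-group, the index of any subgroup of $G$ is a power of $3$, so $N=3^{k}$ for some $k\geqslant 0$. Hence the hypothesis $N<27$ forces $k\leqslant 2$ (equivalently $N\in\{1,3,9\}$), while the stronger hypothesis $N<9$ forces $k\leqslant 1$ (equivalently $N\in\{1,3\}$). This is the step that does all the real work: one never has to treat the intermediate values between the powers of $3$.

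Next, the stabilizer $H$ fixes the point $x$ and $X$ is a threefold with terminal singularities, so Lemma~\ref{stabilizer_of_singularity} applies directly and shows that $H$ can be generated by $3$ elements. It then remains to bound the number of generators of $G$ in terms of those of $H$ and the index $[G:H]=3^{k}$. For this I would invoke Lemma~\ref{subgroup_of_small_index}; the mechanism behind it is the elementary fact that in a $p$-group, if $H'\subset H''$ has index $p$, then $H'$ is maximal in $H''$, so adjoining to a generating set of $H'$ any single element of $H''\setminus H'$ already generates $H''$. Refining a chain $H=H_0\subset H_1\subset\cdots\subset H_k=G$ with all successive indices equal to $3$ and iterating this observation yields that $G$ can be generated by $3+k$ elements.

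Putting the pieces together, $N<27$ gives $k\leqslant 2$ and hence $G$ is generated by $3+2=5$ elements, whereas $N<9$ gives $k\leqslant 1$ and hence $G$ is generated by $3+1=4$ elements, which is exactly the claim. I do not expect a genuine obstacle here: the only point requiring a little care is to record that the orbit length is a true power of $3$ (so that the chain of index-$3$ subgroups linking $H$ to $G$ has length at most $k$), after which the conclusion is pure bookkeeping built on Lemmas~\ref{stabilizer_of_singularity} and~\ref{subgroup_of_small_index}.
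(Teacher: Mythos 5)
Your proposal is correct and follows exactly the paper's own argument: identify $\Stab_G(x)$ as a subgroup of index $N=3^k$ via orbit--stabilizer, bound its generators by $3$ using Lemma~\ref{stabilizer_of_singularity}, and conclude with Lemma~\ref{subgroup_of_small_index}. Your additional remarks --- that orbit lengths in a $3$-group are powers of $3$, so $N<27$ means $k\leqslant 2$ and $N<9$ means $k\leqslant 1$ --- merely make explicit the bookkeeping the paper leaves implicit.
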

\begin{proof}
Consider the stabilizer $\Stab(x)$ of $x$. It is a 3-subgroup of $G$ of index $N = 3^k$.
By Lemma \ref{stabilizer_of_singularity} the group $\Stab(x)$ is generated by $3$ its elements.
Lemma \ref{subgroup_of_small_index} implies the result.
\end{proof} 

There is an important fact that any Gorenstein terminal Fano threefold is a deformation of smooth Fano threefold;
it is called Namikawa smoothing \cite{Fano_smoothing}. I.e. there exists a family
 \begin{equation*}
  \mathcal{X} \to B \ni 0,
 \end{equation*}
the general fiber of $\mathcal{X}$ is a smooth Fano threefold and the central $X_0$ is isomorphic to $X$.
Moreover, we can estimate the number $N$ of singularities on the central fiber $X_0$ by parameters of the general
smooth fiber $X_b$:
\begin{equation}\label{number_of_sing}
  N\leqslant 20 - \rho(\mathcal{X}_b) + h^{1,2}(\mathcal{X}_b),
\end{equation}	
here $h^{1,2}(\mathcal{X}_b)$ is a dimension of the cohomology group $H^{1,2}(\mathcal{X}_b)$. 

Namikawa bound implies the following result.
\begin{corollary}\label{corollary_namikawa_imply}
Assume that $X$ is a Gorenstein terminal singular Fano threefold, $\rho(X) = 1$ and~$G$ is a~\mbox{$3$-sub}\-group in $\Aut(X)$.
If for a general fiber in Namikawa smoothing we have~$r(\mathcal{X}_b)=1$ and~\mbox{$g(\mathcal{X}_b)>6$,}
then $G$ can be generated by $5$ its elements. Moreover, if the number of singularities of~$X$ differs from $9$ and $18$
the group~$G$ can be generated by $4$ elements.
\end{corollary}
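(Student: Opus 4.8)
The plan is to turn the hypotheses into a numerical bound on the number $N$ of singular points of $X$ via the Namikawa estimate \eqref{number_of_sing}, and then to feed this bound into the orbit-length dichotomy of Lemma \ref{lemma_less_27_singularities}.

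First I would identify the family containing the general smooth fiber $\mathcal{X}_b$. Since $\mathcal{X}_b$ is a smooth Fano threefold of index $r(\mathcal{X}_b)=1$, and Namikawa smoothing preserves the Picard number so that $\rho(\mathcal{X}_b)=\rho(X)=1$, the classification of smooth Fano threefolds of Picard number one places $\mathcal{X}_b$ in one of the ten index-one families of genera $2,\dots,10$ and $12$. The condition $g(\mathcal{X}_b)>6$ restricts the genus to $\{7,8,9,10,12\}$, and reading $h^{1,2}(\mathcal{X}_b)$ off the Hodge-number tables one finds that its largest value on these families is $7$, attained in genus $7$. Substituting $\rho(\mathcal{X}_b)=1$ and $h^{1,2}(\mathcal{X}_b)\leqslant 7$ into \eqref{number_of_sing} yields
\begin{equation*}
 N\leqslant 20-1+7=26<27.
\end{equation*}
Because $X$ is Gorenstein terminal and singular, its singular locus is a nonempty finite set of isolated $cDV$ points, and it is preserved by $G$.

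Next I would study the $G$-action on this singular locus. As $G$ is a $3$-group, every orbit has length a power of $3$; lying in a set of at most $26$ points, each orbit has length in $\{1,3,9\}$, in particular less than $27$. Choosing any singular point, Lemma \ref{lemma_less_27_singularities} applied to its orbit shows that $G$ can be generated by $5$ elements, which is the first assertion. For the sharper claim, note that if every orbit in the singular locus had length exactly $9$, then $9$ would divide $N$; since $N\leqslant 26$ this would force $N\in\{9,18\}$. Hence as soon as $N\neq 9$ and $N\neq 18$, some orbit has length $1$ or $3$, i.e. strictly less than $9$, and the second part of Lemma \ref{lemma_less_27_singularities} shows that $G$ can be generated by $4$ elements.

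The computational heart is entirely geometric rather than group-theoretic: the whole argument rests on the inequality $N<27$, and the point I expect to require the most care is verifying that $h^{1,2}(\mathcal{X}_b)\leqslant 7$ uniformly across the genus $7,8,9,10,12$ families (and that $\rho(\mathcal{X}_b)=1$, so that \eqref{number_of_sing} may be used in the form $N\leqslant 19+h^{1,2}$). Once the bound $N<27$ is secured, the passage to the number of generators is the elementary counting of orbit lengths carried out above.
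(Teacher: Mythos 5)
Your proof is correct and takes essentially the same route as the paper: the Namikawa bound \eqref{number_of_sing} together with the Hodge-number tables gives $N\leqslant 20-1+7=26<27$, and then Lemma \ref{lemma_less_27_singularities} yields both assertions. You have merely made explicit two details the paper leaves implicit, namely the uniform bound $h^{1,2}(\mathcal{X}_b)\leqslant 7$ across the genus $7,8,9,10,12$ families and the orbit-length divisibility argument showing that $N\notin\{9,18\}$ forces an orbit of length strictly less than $9$.
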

 \begin{proof}
 Consider a singular point $x$ on $X$. The bound \eqref{number_of_sing} and \cite[Tables 12.6–12.7]{FANO},
 implies that the length of the orbit of $x$ is less than 27. Then by Lemma \ref{lemma_less_27_singularities}
 the group $G$ can be generated by $5$ elements.
 
 If number of of singularities of $X$ differs from $9$ and $18$, then there exists a singularity $x$ on $X$ with
 orbit of less than $9$ points. In this situation by Lemma \ref{lemma_less_27_singularities}
 the group $G$ can be generated by $4$ elements.
\end{proof}

Recall that the \emph{index} of a non-Gorenstein singularity $x$ on a variety $X$ is a minimal number $n$
such that $nK_X$ is a Cartier divisor in a neighborhood $x$.
\begin{corollary}\label{corollary_reid_imply}
Assume that $X$ is a terminal Fano threefold with a non-Gorenstein singularity and~$G$ is a $3$-subgroup
in $\Aut(X)$. Then $G$ can be generated by $5$ elements. Moreover, $G$ can be generated by~$4$ elements 
unless there is exactly $9$ singularities on $X$ and they all are cyclic quotient-singularities of type~$\frac{1}{2}(1,1,1)$.
\end{corollary}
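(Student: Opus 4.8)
The plan is to control the non-Gorenstein locus of $X$ by the orbifold Riemann--Roch estimate of Reid and Kawamata and then feed the result into the orbit--stabilizer mechanism already packaged in Lemma \ref{lemma_less_27_singularities}. Write $\mathcal{B}$ for the basket of fictitious terminal cyclic quotient singularities attached to $X$, and recall the identity $24\chi(\mathcal{O}_X)=(-K_X)\cdot c_2(X)+\sum_{(r,b)\in\mathcal{B}}\bigl(r-\tfrac1r\bigr)$. On a terminal Fano threefold $\chi(\mathcal{O}_X)=1$ and $(-K_X)\cdot c_2(X)>0$, so this yields the strict bound $\sum_{(r,b)\in\mathcal{B}}(r-\tfrac1r)<24$. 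Every geometric non-Gorenstein point of $X$ contributes at least one basket entry, of index $r\geq 2$, hence at least $r-\tfrac1r\geq\tfrac32$ to the left-hand side. I would first record that the set $S$ of non-Gorenstein points is therefore finite with $|S|\leq 15$.

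For the first assertion I would pick any $x\in S$. Its $G$-orbit is contained in $S$, so has length at most $15<27$, and Lemma \ref{lemma_less_27_singularities} immediately gives that $G$ is generated by $5$ elements.

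For the second assertion I would argue through the orbit lengths, all of which are powers of $3$. If some point of $S$ has $G$-orbit of length $<9$, Lemma \ref{lemma_less_27_singularities} gives $4$ generators and we are finished; so assume every orbit in $S$ has length $\geq 9$. Since $|S|\leq 15<18$, the set $S$ must consist of a single orbit of length exactly $9$, whose points are analytically isomorphic of a common index $r$. If $r\geq 3$ then each point contributes at least $r-\tfrac1r\geq\tfrac83$, whence $\sum_{\mathcal{B}}(r-\tfrac1r)\geq 9\cdot\tfrac83=24$, contradicting the strict bound; thus $r=2$. Next I would invoke Reid's description of the index-$2$ terminal singularities: the cyclic quotient $\tfrac12(1,1,1)$ contributes a single basket point, of weight $\tfrac32$, whereas every other index-$2$ terminal singularity (such as $cA/2$) contributes at least two basket entries, hence weight at least $3$; nine such points would force $\sum_{\mathcal{B}}(r-\tfrac1r)\geq 27>24$, again impossible. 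Therefore all nine points are of type $\tfrac12(1,1,1)$. Any further singular point of $X$ would lie in its own $G$-orbit, and should that orbit have length $<9$ we would once more obtain $4$ generators; so in the remaining case $X$ carries exactly these nine singularities, which is precisely the excluded configuration. There Lemma \ref{stabilizer_of_singularity} bounds the stabilizer only by $3$ generators, so the orbit--stabilizer count returns $5$ and no improvement is possible.

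The main obstacle is the step distinguishing $\tfrac12(1,1,1)$ from the remaining index-$2$ types: one must verify, from the explicit basket data of Reid's \emph{Young person's guide} rather than from the index alone, that a non-cyclic index-$2$ terminal point contributes strictly more than $\tfrac32$ to $\sum_{\mathcal{B}}(r-\tfrac1r)$, so that $\tfrac12(1,1,1)$ is the unique minimizer. This is the only place where the fine analytic structure of the singularity, rather than merely its index and orbit length, is needed; a short accompanying check is also required to rule out a second large orbit of additional (Gorenstein) singular points coexisting with the nine $\tfrac12(1,1,1)$-points.
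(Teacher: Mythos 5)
Your proof is correct and follows essentially the same route as the paper: the Reid--Kawamata basket inequality $\sum_{i,j}\left(r_{ij}-\frac{1}{r_{ij}}\right)<24$ bounds the non-Gorenstein locus by $15$ points, Lemma~\ref{lemma_less_27_singularities} disposes of any orbit of length $<27$ (resp.\ $<9$), and in the remaining case of a single orbit of nine index-$2$ points the strict inequality forces each point to have a one-entry basket, which by Reid's classification means it is the cyclic quotient $\frac{1}{2}(1,1,1)$ --- precisely the paper's argument, including its appeal to the YPG fact you correctly flag as the delicate step. The residual check you mention about a coexisting large orbit of Gorenstein singular points is not actually needed: as in Theorem~\ref{main_theorem_dim3}$(a)$, the exceptional configuration concerns only the non-Gorenstein singularities, which your argument fully determines.
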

\begin{proof}
For a terminal singularity $x_i$ on $X$ there exists a set (basket) of virtual quotient-singularities~$y_{ij}$ 
of indexes~$r_{ij}$, where $j = 1,\dots,l_i$. By~\mbox{\cite[Corollary 10.3]{YPG}} and \cite{Kaw} we have the following
inequality:
 \begin{equation}\label{eq_RR}
  \sum_{i=1}^N\sum_{j=1}^{l_i} \left( r_{ij} - \frac{1}{r_{ij}} \right)< 24,
 \end{equation}
here $N$ is a number of all non-Gorenstein singularities on $X$. This implies that $N<16$; thus, by 
Lemma \ref{lemma_less_27_singularities} the group $G$ can be generated by $5$ elements.
 
If the number of singularities on $X$ differs from $9$, we can find a non-Gorenstein point $x$ on $X$ 
with orbit of less than 9 points and by 

If $N=9$ and $G$ is transitive on the set of singularities, then numbers~\mbox{$l_i = l$} in formula \eqref{eq_RR} 
coinside and $r_{ij} = r_j$ do not depend on $i$. Thus, we can rewrite the inequality \eqref{eq_RR} in a following way:
 \begin{equation*}
  \sum_{j=1}^l \left(r_j  - \frac{1}{r_j}\right)< \frac{8}{3}.
 \end{equation*}
Then $l=1$ and $r_1=2$ and all non-Gorenstein singularities of $X$ are cyclic quotient-singularities. 
(see the proofs of \cite[Theorem 9.1, Theorem 10.2]{YPG}).
Since $r_1=2$, they are of type $\frac{1}{2}(1,1,1)$.
\end{proof}

\subsection{Fano threefolds with $\rho>1$}
 
In this section we study Fano threefolds with Picard lattice of rank greater than 1.
As before we consider Fano threefolds such that the rank of the group $\Pic(X)^G$ equals 1.
The following assertion is useful in this situation.
 
\begin{theorem}[{\cite[Theorem 1.2, Theorem 6.6]{G-Fano}}]\label{Fano_big_rho}
Assume that $X$ is a terminal Gorenstein Fano threefold, $\rho(X)>1$ and $G$ is a finite group with an action on
$\Pic(X)$ preserving the intersection form and the canonical class. If we have $\Pic(X)^G \cong \mathbb{Z}$, then 
$X$ is one of the following list:
 \begin{enumerate}
  \item[$1.$] $\rho(X) = 2;$
  \item[$2.$] $X\cong\p^1 \times \p^1 \times \p^1;$
  \item[$3.$] There is a double cover $f:X \to\p^1 \times \p^1 \times \p^1$ branched along an element of the linear system~\mbox{$|-K_{\p^1 \times \p^1 \times \p^1}|;$}
  \item[$4.$] $X$ is a blow up of a divisor $W$ of bidegree $(1, 1)$ in $\p^2\times \p^2$ in a curve~$C$ of bidegree $(2,2)$ and 
  the composition $C \hookrightarrow W \to \p^2\times\p^2 \xrightarrow{pr_i} \p^2$ is an embedding for both projections
  $pr_1$ and $pr_2;$
   \item[$5.$] $X$ is a divisor of multidegree $(1,1,1,1)$ in $\p^1 \times \p^1 \times \p^1\times \p^1$.
 \end{enumerate}
 \end{theorem}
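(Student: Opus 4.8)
The plan is to run the $G$-equivariant analysis of the Mori cone of $X$ and to match the resulting configurations of extremal contractions against the classification of terminal Gorenstein Fano threefolds. Since $-K_X$ is ample, the cone of curves $\overline{NE}(X)\subset N_1(X)_{\mathbb{R}}$ is rational polyhedral and $G$-invariant; dually, $G$ acts on $N^1(X)_{\mathbb{R}}\cong\Pic(X)\otimes\mathbb{R}$ preserving the cubic intersection form and fixing the class of $-K_X$, and the hypothesis $\Pic(X)^G\cong\mathbb{Z}$ says that the $G$-invariant subspace is exactly the line spanned by $-K_X$. If $\rho(X)=2$ we are in case~$1$ and there is nothing to prove, so I would assume $\rho(X)\geqslant 3$. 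Boundedness of terminal Gorenstein Fano threefolds caps $\rho(X)$, and the rank-one invariant condition will force $\rho(X)\leqslant 4$.

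First I would examine the extremal rays of $\overline{NE}(X)$. Because the $G$-invariant subspace of $N_1(X)_{\mathbb{R}}$ is one-dimensional while the extremal rays span the whole space, $G$ must permute the extremal rays in orbits whose classes jointly span $N_1(X)_{\mathbb{R}}$, with the sum over each orbit lying on the invariant line. For a ray $R$ I would use Mori's classification of extremal contractions, extended to the terminal Gorenstein case, to determine the type of the contraction $\phi_R\colon X\to Y_R$; since $G$ permutes the rays in the orbit of $R$, all contractions in that orbit share a type and the targets assemble into a $G$-equivariant morphism $X\to\prod_i Y_i$. When the contractions are $\p^1$-fibrations over products of projective lines, the assembled map exhibits $X$ as a conic bundle over such a product, and a finer analysis of the discriminant or branch data separates the product $(\p^1)^3$, its anticanonical double cover, and the $(1,1,1,1)$-divisor in $(\p^1)^4$ (items $2$, $3$ and $5$); when an orbit instead consists of divisorial contractions, the induced maps to $\p^2$ single out the blow-up of the $(1,1)$-divisor $W\subset\p^2\times\p^2$ of item~$4$.

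The main obstacle is the combinatorial and intersection-theoretic case analysis needed to show that these are the only possibilities: one must rule out every configuration of extremal rays and contraction types, for each $\rho(X)\in\{3,4\}$, that is not realized by one of the five families, and also exclude $\rho(X)\geqslant 5$. This requires controlling the cubic form $D\mapsto D^3$ on $N^1(X)$ under the permutation action of $G$ and checking compatibility with $-K_X$ lying in the fixed line, which is where the bulk of the work lies; once the candidate varieties are isolated, one verifies directly that each admits a group $G$ (for instance a symmetric group permuting the $\p^1$-factors) acting with one-dimensional invariant Picard group.
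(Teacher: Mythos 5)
This statement is not proved in the paper at all: it is imported verbatim from Prokhorov \cite{G-Fano} (Theorem 1.2 and Theorem 6.6 there), so the only meaningful benchmark is Prokhorov's proof. Your outline does follow the same general philosophy as that proof --- equivariant Mori theory, $G$-orbits of $K_X$-negative extremal rays whose orbit sums lie on the invariant line, assembling the contractions in an orbit into a $G$-equivariant morphism, and matching the outcome against the classification of Fano threefolds --- so the strategy is the right one. But what you have written is a plan, not a proof, and the gap is exactly where you locate it yourself: the entire case analysis that constitutes the theorem is deferred. Concretely: (i) the bound $\rho(X)\leqslant 4$ is asserted, not derived; for singular $X$ one only gets $\rho(X)\leqslant 10$ via Namikawa smoothing together with the invariance of $\Pic$ in the smoothing family, and cutting this down to $4$ under $\operatorname{rk}\Pic(X)^G=1$ is itself a substantial argument, not a formal consequence of the invariant-line condition; (ii) $X$ is terminal Gorenstein but need not be $\mathbb{Q}$-factorial, so extremal faces can admit \emph{small} contractions, the Mori--Cutkosky classification of extremal rays does not apply off the shelf, and the claim that an orbit of rays yields a morphism $X\to\prod_i Y_i$ which is a conic bundle, a double cover, or the divisorial structure of item $4$ requires proof (the assembled map need not be finite or flat a priori); (iii) separating items $2$, $3$ and $5$ by ``discriminant or branch data'' and pinning down the $(2,2)$-curve $C\subset W$ with both projections embeddings is precisely the content of \cite[Theorem 6.6]{G-Fano} and occupies most of that paper.

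One further subtlety in the hypotheses: as stated, $G$ acts only on the lattice $\Pic(X)$ preserving the intersection form and $K_X$; a priori such a lattice action need not preserve the effective cone or $\overline{NE}(X)$, so your very first step (``the cone of curves is $G$-invariant'') already silently uses a geometric action $G\subset\Aut(X)$. That is indeed how the theorem is used in this paper (the groups here act on $X$ by automorphisms, which induces the lattice action), and it matches Prokhorov's actual setting, but if you keep the purely lattice-theoretic hypothesis you must either justify cone invariance or strengthen the assumption. In summary: correct skeleton, consistent with the known proof, but the theorem's substance --- the exclusion of all other configurations for $\rho(X)\in\{3,4,\dots\}$ --- is acknowledged rather than carried out, so as a proof attempt it is incomplete.
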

This theorem implies the following.
\begin{lemma}\label{Fano_rho>1_smooth}
  Assume that $G$ is $3$-group and $X$ is a terminal Gorenstein $G\mathbb{Q}$-Fano threefold such that~$\rho(X)>1$. 
  Then $G$ can be generated by $4$ elements.
\end{lemma}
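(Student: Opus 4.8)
The plan is to invoke Theorem \ref{Fano_big_rho}. Since $X$ is a terminal Gorenstein $\GQ$-Fano threefold, the group $\Pic(X)^G$ has rank $1$, and it is torsion-free because $\Pic(X)$ is; hence $\Pic(X)^G\cong\mathbb{Z}$ and the hypotheses of Theorem \ref{Fano_big_rho} are satisfied for the induced action of $G$ on $\Pic(X)$. Thus $X$ belongs to one of the five listed families, and I would treat them one at a time. In each family the idea is to produce a $G$-equivariant fibration or product structure whose base and general fiber have small automorphism groups, and then to combine the resulting generator bounds using Lemma \ref{lemma_Gbundle} and Lemma \ref{generators_of_group_extension}. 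The recurring mechanism is that a $3$-subgroup of a symmetric group $\SG_n$ permuting the natural projections is very restricted, so $G$ always respects enough of the fibration structure to reduce the dimension.

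I would first dispose of case~$(1)$, where $\rho(X)=2$, by showing it cannot occur for a $3$-group. The group $G$ acts on the rank-$2$ lattice $N_1(X)$ preserving the Mori cone $\overline{NE}(X)$, which is a strictly convex two-dimensional cone. Any element of order $3$ in $\mathrm{GL}(N_1(X))\cong\mathrm{GL}_2(\mathbb{Z})$ has the two primitive cube roots of unity as eigenvalues, so it acts as a rotation by $120^{\circ}$, which cannot preserve a strictly convex angular sector. Hence $G$ acts trivially on $N_1(X)$, and therefore on $\Pic(X)$, forcing $\Pic(X)^G=\Pic(X)$ of rank $2$, contrary to $\Pic(X)^G\cong\mathbb{Z}$. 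So case~$(1)$ is vacuous.

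For case~$(2)$, $X\cong(\p^1)^3$ and $\Aut(X)\cong\PGL_2(\mathbb{C})^{3}\rtimes\SG_3$. The image of $G$ in $\SG_3$ is a $3$-subgroup, hence contained in a cyclic group of order $3$, while the kernel $G_0$ lies in $\PGL_2(\mathbb{C})^{3}$ and projects to each factor as a $3$-subgroup of $\PGL_2(\mathbb{C})$, which is cyclic by Lemma \ref{gl2}. Thus $G_0$ embeds into a product of three cyclic groups and is generated by $3$ elements, and Lemma \ref{generators_of_group_extension} gives that $G$ is generated by $3+1=4$ elements. For case~$(3)$ the double cover $f\colon X\to(\p^1)^3$ is defined by a $G$-invariant linear system and is therefore $G$-equivariant, so the double-cover clause of Lemma \ref{lemma_Gbundle} yields $G\subset\Aut((\p^1)^3)$, and the previous count applies.

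Cases~$(4)$ and~$(5)$ I would handle by the fibration argument. In case~$(4)$ the two projections $X\to\p^2$ give a homomorphism $\Aut(X)\to\SG_2$; since $G$ is a $3$-group its image is trivial, so each projection is $G$-equivariant, and its general fiber is a smooth conic $\cong\p^1$. Applying Lemma \ref{lemma_Gbundle} with $\Aut(F)=\Aut(\p^1)$ cyclic by Lemma \ref{gl2} and $\Aut(B)=\Aut(\p^2)\subset\Cr_2(\mathbb{C})$, whose $3$-subgroups are generated by $3$ elements by Theorem \ref{main_theorem_dim2}, gives at most $1+3=4$ generators. In case~$(5)$, $X$ is a divisor of multidegree $(1,1,1,1)$ in $(\p^1)^4$, and $\Aut(X)$ permutes the four projections to $\p^1$, yielding $G\to\SG_4$ with image a $3$-subgroup, hence trivial or generated by a $3$-cycle; in either case it fixes one of the four factors. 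Projecting to that factor gives a $G$-equivariant map $X\to\p^1$ whose general fiber is a $(1,1,1)$-divisor in $(\p^1)^3$, a rational (del Pezzo) surface $S$, and Lemma \ref{lemma_Gbundle} combined with the cyclic bound on $\Aut(\p^1)$ and the bound $d\le 3$ for $3$-subgroups of $\Aut(S)\subset\Cr_2(\mathbb{C})$ from Theorem \ref{main_theorem_dim2} again gives at most $1+3=4$ generators. The point requiring the most care is the genuine $G$-equivariance of these projections and of the double cover, which I would justify by noting that they are the extremal Mori contractions of $X$ (respectively the anticanonically defined cover), so that $\Aut(X)$ merely permutes them, together with the identification of the general fibers above; granting this, every surviving case produces at most four generators.
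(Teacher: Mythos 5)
Your treatment of case~$(4)$ has a genuine gap, and it is the one case where the key idea of the paper's proof cannot be bypassed. You posit a homomorphism $\Aut(X)\to\SG_2$ permuting the two projections $X\to\p^2$ and conclude that a $3$-group preserves each of them. But $X$ carries \emph{three} distinct divisorial contractions to $W$ with three exceptional divisors $E_1,E_2,E_3$ (this is what the paper extracts from the tables in \cite{FANO}), so the conic bundle structures on $X$ are not intrinsically a two-element set, and an automorphism may carry a projection coming from one blow-down structure to one coming from another; your map to $\SG_2$ is not well defined. Worse, your conclusion contradicts the hypotheses of the lemma: with $H_1,H_2$ the pullbacks of the hyperplane classes and $E$ the exceptional divisor of one blow-down, $\Pic(X)\cong\mathbb{Z}^3$ has basis $H_1,H_2,E$ and $-K_X=2H_1+2H_2-E$; if $G$ fixed both $H_1$ and $H_2$, then, since it fixes $K_X$, it would fix $E$ and act trivially on $\Pic(X)$, giving $\operatorname{rk}\Pic(X)^G=3$, against the $\GQ$-Fano assumption $\Pic(X)^G\cong\mathbb{Z}$. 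The same computation shows that in the minimal situation \emph{no} conic bundle class is $G$-invariant (that would already force $\operatorname{rk}\Pic(X)^G\geqslant 2$), so the fallback you use in case~$(5)$ --- ``the $3$-group fixes one of the structures'' --- is unavailable here: $G$ may genuinely permute the three structures in a $3$-cycle. The repair is the paper's argument: pass to the normal subgroup $H\subset G$ stabilizing each $E_i$, so that $G/H$ embeds in $\SG_3$ and is cyclic of order dividing $3$; then $H$ acts trivially on $\Pic(X)$, descends to $W\cong\p_{\p^2}(T_{\p^2})$ preserving the $\p^1$-bundle over $\p^2$, hence is generated by $3$ elements by Lemma \ref{lemma_Gbundle}, and Lemma \ref{generators_of_group_extension} gives $4$ for $G$.

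The remaining cases are essentially sound. Your eigenvalue/convex-cone argument for $\rho(X)=2$ is equivalent to the paper's Lemma \ref{3-group_acts_on_Z^2}; your counts for $(\p^1)^3$ match the paper's (you use the $\SG_3$ factor where the paper uses the cyclicity of $3$-subgroups of $\GL_3(\mathbb{Z})$, Lemma \ref{3-subgroups_in_gl3Z}); and applying the double-cover clause of Lemma \ref{lemma_Gbundle} directly to $G$ in case~$(3)$ is a legitimate shortcut, since $f$ is defined by the ($\Aut(X)$-invariant) half-anticanonical system. In case~$(5)$ you diverge from the paper: there the paper observes that a $3$-group must fix one of the four exceptional divisors (equivalently, one of the four projection classes), so $\Pic(X)^G$ has rank at least $2$ and the case is vacuous under minimality; your del Pezzo fibration bound $1+3=4$ is valid as far as it goes, but it proves a bound in a case that does not occur, and this observation is exactly the mechanism you failed to account for in case~$(4)$.
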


\begin{proof}
Consider all cases from Theorem \ref{Fano_big_rho}. 
Assume that $\rho(X)=2$ as in the point 1 of Theorem~\ref{Fano_big_rho}. Then by Lemma \ref{3-group_acts_on_Z^2} 
the sublattice $\Pic(X)^G$ of the rank 2 lattice $\Pic(X)$ can not be of rank 1. Therefore, the threefold $X$ is not $G$-minimal.
  
Consider $3$-subgroup $G$ of the group of automorphisms of  $X=\p^1\times\p^1\times\p^1$ as in the point $2$ of Theorem \ref{Fano_big_rho}).
Consider the subgroup $H$ in $G$ of automorphisms of $X$ with trivial action on $\Pic(X)$. Then $H$ is a subgroup of a 
group~\mbox{$\PGL_2(\mathbb{C})\times \PGL_2(\mathbb{C})\times\PGL_2(\mathbb{C})$}; thus, by Lemma~\ref{gl2} the group $H$ can be generated
by $3$ elements. The quotient group $G/H$ acts faithfully on the lattice $\Pic(X)\cong \mathbb{Z}^3$. Then it is a subgroup 
of $\GL_3(\mathbb{Z})$ and by Lemma \ref{3-subgroups_in_gl3Z} it is cyclic. Therefore, by Lemma \ref{generators_of_group_extension} 
the group $G$ can be generated by $4$ elements.

Consider a $3$-group $G$ with a faithful action on the double cover $f\colon X\to \p^1\times \p^1 \times \p^1$ as in the point 1 of Theorem~\ref{Fano_big_rho}. 
Denote by $H$ the kernel of action of $G$ on $\Pic(X)$. The group $G/H$ acts faithfully on the Picard group $\Pic(X)\cong \mathbb{Z}^3$. Thus, $G/H$ is
cyclic  by Lemma \ref{3-subgroups_in_gl3Z}. The group $H$ fixes all linear bundles on $X$; in particular,
it preserves the linear system $|(pr_i\circ f)^*\mathcal{O}_{\p^1}|$. Thus, any element of $H$ maps the fiber of $f$ to a fiber of $f$
and preserves the structure of the double cover $f$. By Lemmas \ref{lemma_Gbundle} and \ref{gl2} the group $H$ can be generated by
three elements. Therefore, by Lemma \ref{generators_of_group_extension} the group $G$ can be generated by $4$ elements.

Consider the blow up $X$ of $W$ along $C$ as in point $4$ of Theorem \ref{Fano_big_rho}. By~\cite[Table 12.4]{FANO} there exists
three contractions of an exceptional divisor on $X$ to $W$ and there exist three exceptional divisors $E_1$, $E_2$ and $E_3$. 
Consider the normal subgroup $H$ of $G$ which fixes these three divisors. This group $H$ acts regularly on $W$ and fixes there 
the curve $C$. Moreover, since $\rho(W)=2$ the 3-group $H$ acts trivially on $\Pic(W)$. Thus, the group $H$ preserves the structure 
of the projective bundle
\begin{equation*}
 W\cong \p_{\p^2}(T_{\p^2})\to \p^2.
\end{equation*}
Then by Lemma \ref{lemma_Gbundle} the group $H$ can be generated by $3$ elements. 
Therefore, $G$ can be generated by $4$ elements by Lemma \ref{generators_of_group_extension}..
  
Finally, consider the divisor $X$ of multidegree $(1,1,1,1)$ in $\p^1 \times \p^1 \times \p^1 \times \p^1$ 
as in point $5$ of Theorem~\ref{Fano_big_rho}. By the table \cite[Table 12.5]{FANO} there are exactly $4$ different 
contractions from $X$ to~\mbox{$\p^1\times \p^1\times \p^1$}. The $3$-group $G$ can not act transitively on the set 
of $4$ divisors; thus, it fixes one of them. Therefore, $G$ is not a $G$-minimal variety; this contradicts to the assumption.
\end{proof}
 \subsection{Fano threefolds with $\rho=1$}
\subsubsection{$|-K_X|$ does not induce an embedding}
First consider a Fano threefold such that its anticanonical linear system has a non-trivial base locus.
\begin{lemma}\label{Fano_rho=1_Base_pts}
 Consider a terminal Gorenstein Fano threefold $X$ such that the linear system $|-K_X|$ has a non-empty base locus.
 Any $3$-subgroup $G\subset \Aut(X)$ can be generated by $3$ elements.
\end{lemma}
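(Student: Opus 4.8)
The plan is to analyze the base locus of $|-K_X|$ directly, using the fact that it must be $G$-invariant together with the general structure theory of anticanonical systems on Gorenstein terminal Fano threefolds of Picard number one. For such a threefold, when $|-K_X|$ fails to be base-point free, the classification (going back to the Iskovskikh–Shokurov–Reid analysis summarized in \cite[Chapter 4]{FANO}) severely restricts the base locus: it is either a single point or a curve of very small degree, and the generic member of the system is a (possibly singular) K3 surface whose own anticanonical geometry pins down the base locus precisely. First I would invoke this classification to conclude that $\mathrm{Bs}|-K_X|$ is either a single point or a line in the appropriate sense; the key point is that whatever it is, it is canonically associated to $X$ and hence preserved by every automorphism, so the whole group $G$ preserves it.

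In the case where the base locus is a single point $x$, the argument is immediate: $G$ fixes $x$, so by Lemma \ref{stabilizer_of_singularity} the group $G$ can be generated by $3$ elements, which is exactly the claimed bound. In the case where the base locus is a curve $B$, I would use that $B$ is a smooth rational curve (a line, in the polarization $H=-K_X$) on which $G$ acts; the action of $G$ on $B\cong\p^1$ factors through $\Aut(\p^1)\cong\PGL_2(\C)$, so by Lemma \ref{gl2} the image of $G$ in $\Aut(B)$ is cyclic, and the kernel fixes $B$ pointwise. To control the kernel I would pick a point $x\in B$ fixed by the cyclic quotient (such a point exists since a cyclic $3$-group acting on $\p^1$ has fixed points) and pass to $\Stab(x)$, again applying Lemma \ref{stabilizer_of_singularity} to bound $\Stab(x)$ by $3$ generators, then combine with the cyclic quotient on $B$ via Lemma \ref{generators_of_group_extension} — but this naive count gives $4$, so the sharper statement requires a more careful reduction.

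The main obstacle, and where I would spend the bulk of the effort, is getting the bound down to $3$ rather than $4$ in the curve case. The resolution should come from the observation that the linearization of $G$ at the fixed point $x$ lands not in the full $\GL_3(\C)$ but in the stabilizer of a \emph{distinguished tangent direction} (the tangent line to $B$ at $x$, which is $G$-invariant because $B$ itself is). Thus $G\subset \GL(T_x)$ preserves a line in $T_x\cong\C^3$, so it embeds in the subgroup of $\GL_3(\C)$ of block-upper-triangular matrices $\begin{pmatrix}\ast&\ast\\0&\ast\end{pmatrix}$ with a $1\times 1$ and a $2\times 2$ block; a $3$-subgroup of such a group is generated by at most $3$ elements by the linear-algebra estimates of the Appendix (cf.\ Proposition \ref{gl4} and the structure results used in Lemma \ref{stabilizer_of_singularity}). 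I expect that tracking the semi-invariance of the defining data of $X$ near $x$, exactly as in the proof of Lemma \ref{stabilizer_of_singularity}, is what rules out the genuinely $4$-generated possibility and forces the $3$-generator bound in every case.
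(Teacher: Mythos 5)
Your skeleton matches the paper's proof: invoke the classification of the anticanonical base locus (the paper cites \cite[Theorem 0.5]{Gorenstein_Fano}) to conclude that $\mathrm{Bs}(|-K_X|)$ is either a point or a $\p^1$, and in the point case conclude immediately via Lemma \ref{stabilizer_of_singularity}. The gap is in your curve case, where you miscount and then patch unnecessarily. Once you pick the point $x\in B$ fixed by the cyclic image of $G$ in $\Aut(B)\cong\PGL_2(\C)$ (Lemma \ref{gl2}), the \emph{entire} group $G$ fixes $x$: the kernel $K$ of $G\to\Aut(B)$ fixes $B$ pointwise, and any $g\in G$ moves $x$ only through its image in $G/K$, which fixes $x$. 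Hence $\Stab(x)=G$, there is no extension to feed into Lemma \ref{generators_of_group_extension}, and Lemma \ref{stabilizer_of_singularity} applied to $G$ itself gives the $3$-generator bound at once. Your ``naive count gives $4$'' comes from bounding $\Stab(x)$ by $3$ generators and then adding one for the cyclic quotient acting on $B$ --- but that quotient action is already realized inside $\Stab(x)=G$, so you counted it twice. This one-line observation is exactly the paper's argument, and it makes the entire third paragraph of your plan superfluous.

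Moreover, your proposed rescue --- linearizing at $x$ and using the $G$-invariant tangent direction of $B$ to land in a block-triangular subgroup of $\GL_3(\C)$ --- is not sound as stated: it tacitly assumes $x$ is a smooth point of $X$, so that $T_x\cong\C^3$. But a base point of $|-K_X|$ may well be a singular point; for a Gorenstein terminal ($cDV$) point one only knows $\dim(T_x)\leqslant 4$ by \cite[Corollary 3.12]{YPG}, and in that case your parabolic-in-$\GL_3$ analysis does not apply --- one would instead have to rerun the semi-invariance analysis of the defining equation, which is precisely what the proof of Lemma \ref{stabilizer_of_singularity} does. That lemma is formulated to cover the smooth and $cDV$ cases uniformly and already yields $3$ generators for the full stabilizer of any point of a terminal threefold, so no sharpening of it is needed here.
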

\begin{proof}
 By \cite[Theorem 0.5]{Gorenstein_Fano} the base locus $B=\mathrm{Bs}(|-K_X|)$ of a Fano threefold 
 is either a point or a projective line $\p^1$. If $B$ is a point, then any automorphism of $X$ fixes it.
 Thus, by Lemma \ref{stabilizer_of_singularity} the group $G$ can be generated by 3 elements. 
 
 If $B$ is isomorphic to $\p^1$, then by Lemma \ref{gl2} the group $G$ fixes a point on $B$. 
 Thus, by Lemma \ref{stabilizer_of_singularity} the group $G$ can be generated by 3 elements.
\end{proof}
	
Recall that for any subvariety $Y$ in a projective space
which is not contained in a hyperplane we have an inequality $\mathrm{codim}(Y)+1\leqslant\deg(Y)$. Moreover,
in case when we have an equality $Y$ is called a \emph{variety of minimal degree}. Such varieties are 
classified, see, for instance, \cite[Theorem 2.2.11]{FANO}; in this situation $Y$ is either a projective space,
a quadric hypersurface, a scroll, a Veronese surfaceor a cone over a scroll or a Veronese surface.
Recall also that a \emph{scroll} is the image of the projectivization of a vector bundle  $\p\left(\bigoplus \mathcal{O}_{\p^1}(a_i)\right)$, 
where all $a_i< 0$ under the map induced by the linear system $|\mathcal{O}_{\p\left(\bigoplus \mathcal{O}_{\p^1}(a_i)\right)}(1)|$.
In the next lemma we study the $3$-groups with a faithful action on a variety of minimal degree.

\begin{lemma}\label{var_of_min_degree}
 Assume that $Y$ is a variety of minimal degree and $G$ is a $3$-subgroup in $\Aut(Y)$.
 If $\dim(Y) = d$ and $d<7$, then $G$ can be generated by $d$ elements.
\end{lemma}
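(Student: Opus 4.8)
The plan is to go through the classification of varieties of minimal degree recalled just above the statement---projective space, smooth quadric, rational normal scroll, Veronese surface, and cones over scrolls or Veronese surfaces (as well as, in the singular case, over quadrics)---and bound $G$ in each class, reducing everything to the generator estimates for $3$-subgroups of $\GL_n(\mathbb C)$ and $\PGL_n(\mathbb C)$ with $n<9$ provided by Appendix \ref{appendix}, together with Lemma \ref{gl2} and the fibre-space bound of Lemma \ref{lemma_Gbundle}. The two easiest classes come first. If $Y\cong\p^d$ then $\Aut(Y)\cong\PGL_{d+1}(\mathbb C)$, and since $d+1\leqslant 7<9$ the Appendix bounds a $3$-subgroup of $\PGL_{d+1}(\mathbb C)$ by $d$ generators; the Veronese surface is $\p^2$ with $\Aut=\PGL_3(\mathbb C)$, giving the bound $2=d$.

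For a scroll, $Y$ is a $\p^{d-1}$-bundle over $\p^1$ with $\Pic(Y)\cong\mathbb Z^2$. I would observe that $G$ acts on $\Pic(Y)$ permuting its (at most two) extremal rays; being a $3$-group it must fix each ray, hence it preserves the projection $Y\to\p^1$ (this is the only point where one must avoid the ruling-swapping automorphisms that occur e.g. on $\p^1\times\p^1$, and they form a $2$-group invisible to $G$). Thus $Y\to\p^1$ is a $G$-fibre space with fibre $\p^{d-1}$, and Lemma \ref{lemma_Gbundle} combined with Lemma \ref{gl2} (base $\p^1$: one generator) and the Appendix bound for $\PGL_d(\mathbb C)$ (fibre $\p^{d-1}$: at most $d-1$ generators) yields $1+(d-1)=d$.

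For a smooth quadric $Q^d\subset\p^{d+1}$ one has $\Aut(Q^d)=O_{d+2}(\mathbb C)/\{\pm I\}$ up to a component group that is a $2$-group; since $G$ is a $3$-group it lies in the identity component and lifts (by Schur--Zassenhaus, as $\{\pm I\}$ is coprime to $|G|$) to a $3$-subgroup of $O_{d+2}(\mathbb C)$. The key input here is that a group of odd order has no non-trivial self-dual complex irreducible representation (its only real conjugacy class is that of the identity): the invariant symmetric form therefore splits the representation as $V_0\oplus\bigoplus_i(W_i\oplus W_i^{*})$ with $G$ acting trivially on the trivial-isotypic part $V_0$ and faithfully on $W=\bigoplus_i W_i$, whence $G\hookrightarrow\GL(W)$ with $\dim W\leqslant\lfloor(d+2)/2\rfloor\leqslant 4$. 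Proposition \ref{gl4} and its lower-dimensional analogues then bound $G$ by $\lfloor(d+2)/2\rfloor\leqslant d$.

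The cone case is the remaining one and I expect it to be the main obstacle. Write $Y$ as a cone with vertex a linear space $\Lambda\cong\p^k$ over a scroll, Veronese surface, or smooth quadric $Z$ of dimension $m=d-k-1$. The first delicate point is that the vertex locus $\Lambda$ is intrinsic, hence invariant under all of $\Aut(Y)$: for $Z$ not itself a cone, $\Lambda$ is exactly the set of points from which $Y$ is a cone. Blowing up $\Lambda$ should give a $G$-equivariant resolution $\widetilde Y\to Y$ exhibiting $\widetilde Y$ as a $\p^{k+1}$-bundle over $Z$, namely the family of $(k+1)$-planes $\langle\Lambda,z\rangle$ for $z\in Z$; since $G$ fixes $\Lambda$ it lifts to $\widetilde Y$ and preserves this projection. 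Lemma \ref{lemma_Gbundle} then gives the bound as the sum of the bound for $\Aut(Z)$ (at most $m$, by the already-settled scroll/Veronese/quadric cases) and the bound for $\Aut(\p^{k+1})=\PGL_{k+2}(\mathbb C)$ (at most $k+1$, by the Appendix, valid since $k+2<9$), that is $m+(k+1)=d$. The points requiring genuine care are precisely the intrinsic characterisation of $\Lambda$ and the verification of the claimed bundle structure of $\widetilde Y$; note that no circularity arises, because the base $Z$ of such a cone is never itself a cone and has strictly smaller dimension, so it falls under the cases already treated.
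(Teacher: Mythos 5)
Your proposal is correct, and its skeleton coincides with the paper's: run through the Eisenbud--Harris classification, use Proposition \ref{gl4} for $\p^d$ (and the Veronese), use the canonical projection to $\p^1$ plus Lemma \ref{lemma_Gbundle} for scrolls, and blow up the (invariant) vertex of a cone to reduce to a projective bundle over the lower-dimensional base. The one place where you genuinely diverge is the smooth quadric: the paper delegates this to Lemma \ref{lemma_3gr_on_quadric}, which argues via Corollary \ref{corollary_ab_subgroup_in_3gr} that a $3$-group needing $m$ generators contains an abelian subgroup of rank $m$, and then bounds $m$ by the rank $\lfloor (d+2)/2 \rfloor$ of a maximal torus of $\PSO_{d+2}(\mathbb{C})$, asserting that abelian subgroups lie in a torus. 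Your argument instead lifts $G$ to $O_{d+2}(\mathbb{C})$ by Schur--Zassenhaus and uses the classical fact that an odd-order group has no nontrivial self-dual complex irreducible representation, so the invariant quadratic form forces $V\cong V_0\oplus(W\oplus W^{*})$ with $G$ acting trivially on $V_0$ and faithfully on $W$, whence $G\hookrightarrow\GL(W)$ with $\dim W\leqslant\lfloor(d+2)/2\rfloor\leqslant 4$, and Proposition \ref{gl4} (or Lemmas \ref{lemma_gl3}, \ref{gl2}) finishes. This is self-contained, avoids the torality claim for abelian subgroups of $\PSO_{d+2}(\mathbb{C})$ (which requires justification in general), and buys the same bound $\lfloor(d+2)/2\rfloor\leqslant d$; the paper's route is shorter given its appendix machinery and also covers singular quadrics inside Lemma \ref{lemma_3gr_on_quadric} by the same vertex blow-up that you perform, so your folding of quadric cones into the general cone case is a harmless reorganization. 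Your extra care in the scroll case (a $3$-group cannot swap the two extremal rays of $\Pic(Y)\cong\mathbb{Z}^2$) and in the cone case (intrinsic characterization of the vertex, explicit $\p^{k+1}$-bundle structure $\langle\Lambda,z\rangle$ on the blow-up, no circularity since the base of a cone is not a cone) makes explicit what the paper only asserts ("the projection to $\p^1$ is canonical", "$G$ preserves the vertex"), and all the numerical hypotheses you need ($k+2<9$, $d<9$) are indeed satisfied since $d<7$.
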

\begin{proof}
 If $Y\cong \p^d$, then by Proposition \ref{gl4} the group $G$ can be generated by $d$ elements.
 
 If $Y$ is a scroll. then the projection from $Y$ to $\p^1$ is canonical; and $G$ preserves the structure 
 of the projective bundle. By Lemma \ref{lemma_Gbundle} and Proposition \ref{gl4} the group $G$ can be 
 generated by $d$ elements.
 
 If $Y$ is a cone over a scroll or a Veronese surface, then the vertex of the cone $Z\subset Y$  is a projective space 
 of codimension $c$ in $Y$. A blow up of $Y$ in $Z$ is a $\p^c$-fiber space over $\p^1$, $\p^2$ or a $(d-c)$-dimensional
 scroll. The group $G$ preserves the vertex of a cone; thus, it acts regularly on the blow up. Then by Lemma \ref{lemma_Gbundle}
 we get the result.
 
 If $Y$ is a $d$-dimensional quadratic hypersurface in $\p^{d+1}$, then by Lemma \ref{lemma_3gr_on_quadric} 
 the group $G$ can be generated by $d$ elements.
\end{proof}

Now we study the Fano varieties such that the base locus of $|-K_X|$ is empty 
and this linear system induces a double cover.

\begin{lemma}\label{Fano_rho=1_2-coverings}
Assume that $X$ is terminal Gorenstein Fano threefold and the linear system 
$|-K_X|$ induces a regular map which is not an embedding. Then a $3$-subgroup $G$
of $\Aut(X)$ can be generated by $3$ elements.
\end{lemma}
\begin{proof}
 By \cite[Proposition 2.1.15]{FANO}, if $\phi_{|-K_X|}$ is base point free, then its degree equals $1$ or $2$.
 Since the map $\phi_{|-K_X|}$ is not an embedding, its image in the linear system $|-K_X|$ is a variety of minimal degree
 with isolated singularities. By Lemmas  \ref{lemma_Gbundle} and \ref{var_of_min_degree}  the group $G$ can be generated by $3$ elements.
\end{proof}
  
\subsubsection{Del Pezzo varieties}
The previous assertions implies the following result for del Pezzo varieties.
\begin{corollary}\label{corollary_3gr_on_dP}
 Assume that $X$ is a terminal Gorenstein $G$-del Pezzo threefold with $\rho(X) = 1$ and~$G$ is a $3$-group.
 Then $G$ can be generated by $4$ elements.
\end{corollary}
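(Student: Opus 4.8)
The plan is to classify terminal Gorenstein del Pezzo threefolds $X$ of Picard number one by their degree $d = H^3$, where $-K_X \sim 2H$, and to handle each degree separately using the anticanonical (or half-anticanonical) linear system. Recall that for del Pezzo threefolds the degree satisfies $1 \leqslant d \leqslant 5$. The key idea is that in most cases the linear system $|H|$ gives an embedding of $X$ into a projective space, and the action of $G$ on $X$ linearizes to an action on the ambient space via $\Aut(X) \hookrightarrow \PGL(H^0(X, \mathcal{O}(H)))$; then the bound on generators of $3$-subgroups of $\GL_n(\C)$ from the Appendix applies. The main subtlety is that $|H|$ need not always embed $X$, so I first treat the small-degree cases where the embedding behavior is degenerate.

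First I would dispose of the cases where $|-K_X|$ does not induce an embedding: by Lemma~\ref{Fano_rho=1_Base_pts} and Lemma~\ref{Fano_rho=1_2-coverings}, if the anticanonical system has nonempty base locus or induces a double cover, then $G$ can already be generated by $3$ elements, which is better than the claimed bound of $4$. So I may assume $|-K_X|$ embeds $X$; equivalently I may work with the smaller system $|H|$ and analyze the map $\phi_{|H|}\colon X \dashrightarrow \p^{N}$ where $N+1 = \dim H^0(X,\mathcal{O}(H))$. For del Pezzo threefolds the dimension $N$ is small: one computes $N = d+1$ from Riemann--Roch, so that $\phi_{|H|}$ lands in $\p^{d+1}$ with $d \leqslant 5$, i.e.\ in at most $\p^6$.

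The central step is then the following. When $|H|$ is very ample, the action of $G$ on $\Aut(X)$ lifts to a linear (projective) action on $\p^{d+1}$, so $G$ is a $3$-subgroup of $\PGL_{d+2}(\C)$; replacing $G$ by its preimage in $\GL_{d+2}(\C)$ (a $3$-group mapping onto $G$), Lemma~\ref{generators_of_group_extension} reduces me to bounding generators of $3$-subgroups of $\GL_{d+2}(\C)$. Since $d \leqslant 5$ gives $d+2 \leqslant 7 < 9$, the Appendix results (the analogues of Propositions~\ref{gl2} and~\ref{gl4}) apply and yield a generating set of at most $4$ elements for all the relevant ranks. The degenerate small-degree cases, where $\phi_{|H|}$ is not an embedding and $X$ presents instead as a hypersurface or complete intersection of low degree, would be handled by identifying $X$ (e.g.\ as a quadric, a cubic, a complete intersection of quadrics, or a weighted hypersurface) from the classification and observing in each case that $\Aut(X)$ sits inside a small $\PGL$ preserving the equations.

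The main obstacle I expect is the lowest-degree cases $d=1$ and $d=2$, where $|H|$ fails to be very ample: for $d=1$ the system $|H|$ may have a base point and $X$ realizes as a hypersurface in weighted projective space, and for $d=2$ one gets a double cover rather than an embedding. For these I would fall back on Lemma~\ref{var_of_min_degree} and Lemma~\ref{Fano_rho=1_2-coverings}, using the fact that $G$ fixes the base point or preserves the double-cover structure, so that by Proposition~\ref{fixed_point} and Lemma~\ref{stabilizer_of_singularity} the group $G$ embeds into $\GL_n(\C)$ for small $n$ and is generated by at most $4$ (in fact $3$) elements. Assembling these pieces, every del Pezzo degree yields a bound of $4$ generators, which proves the corollary.
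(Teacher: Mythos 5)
Your reduction for the non-embedding cases and for degree $3$ matches the paper, but the central step fails for degrees $4$ and $5$, and this is exactly where the real work lies. You embed $X$ by $|H|$ into $\p^{d+1}$ and invoke the Appendix bound for $3$-subgroups of $\PGL_{d+2}(\C)$; but Proposition \ref{gl4} gives $n-1$ generators for a $3$-subgroup of $\PGL_n(\C)$, so for $d=4$ (i.e.\ $\PGL_6(\C)$) you get only $5$, and for $d=5$ (i.e.\ $\PGL_7(\C)$) only $6$. These linear bounds cannot be improved in the abstract: the image of the diagonal torus already contains elementary abelian $3$-subgroups of ranks $5$ and $6$ in $\PGL_6(\C)$ and $\PGL_7(\C)$ respectively. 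So the fact that such a group must preserve the equations of $X$ is not a side remark to be ``observed in each case'' --- it is the essential geometric input, and your proposal never carries it out.

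The paper closes these two cases as follows. For $d=4$, the threefold is a complete intersection of two quadrics in $\p^5$, so there is a pencil ($\cong\p^1$) of quadrics through $X$; by Lemma \ref{gl2} a $3$-group acting on $\p^1$ fixes a point, hence $G$ preserves one quadric of the pencil and acts faithfully on a $4$-dimensional quadric hypersurface, where Lemma \ref{lemma_3gr_on_quadric} (via the torus of $\PSO_6(\C)$, of rank $3$) gives the bound $4$. For $d=5$, one first uses \cite[Theorem 1.7]{GFano} to conclude that a terminal Gorenstein $G$-del Pezzo threefold of degree $5$ is automatically smooth, and then Mukai's result that $\Aut(X)\cong\PGL_2(\C)$, so by Lemma \ref{gl2} the group $G$ is in fact cyclic. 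Neither ingredient (the equivariant choice of a quadric in the pencil, nor smoothness plus the computation of the full automorphism group) is recoverable from your representation-theoretic framework, so as written the proposal proves the corollary only for $d\leqslant 3$.
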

\begin{proof}
If the anticanonical system $|-K_X|$ does not induce an embedding to the anticanonical linear system, then
by Lemmas \ref{Fano_rho=1_Base_pts} and \ref{Fano_rho=1_2-coverings} the group $G$ can be generated by $3$ elements.

If the degree of $X$ equals $3$ and $|-K_X|$ induces an embedding, then $G$ is a subgroup of  $\PGL_5(\mathbb{C})$.
Thus by Proposition \ref{gl4} the group $G$ can be generated by $4$ elements.
  
If $X$ is a del Pezzo variety of degree $4$, then by \cite[Theorem 3.2.5]{FANO} it is isomorphic to the complete
intersection of two quadric hypersurfaces. Thus, there is a 1-dimensional pencil of quadrics passing through $X$.
By Lemma \ref{gl2} the group $G$ preserves one hypersurface in this pencil. Thus, we have a faithful action of $G$ on a 
quadric hypersurface of dimension $4$. Lemma  \ref{lemma_3gr_on_quadric} implies that $G$ can be generated by $4$ elements. 

If $X$ is a $G$-del Pezzo threefold of degree $5$, then by \cite[Theorem 1.7]{GFano} it is smooth.
By \cite[Proposition 4.4]{Muk88} we get that the group $\Aut(X)$ is isomorphic to $\PGL_2(\mathbb{C})$. 
Thus, Lemma \ref{gl2} the group~$G$ is cyclic.
\end{proof}

\subsubsection{Fano threefolds of index $1$}
\begin{lemma}\label{lemma_3gr_g_less_5}
Assume that $X$ is a terminal Gorenstein Fano threefold with Picard number $1$, index~$1$ and genus $g<5$.
If $G$ is a $3$-subgroup of $\Aut(X)$ then it can be generated by $4$ elements.
\end{lemma}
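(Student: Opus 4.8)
The plan is to use the standard classification of terminal Gorenstein Fano threefolds of index $1$ and small genus in terms of their anticanonical embedding, and reduce each case to an action on a nicer ambient variety where the bound is already available. By Lemma \ref{Fano_rho=1_2-coverings} we may assume $\phi_{|-K_X|}$ is an embedding, so $X$ is the anticanonical model sitting in $\p^{g+1}$ with $G \subset \Aut(X) \subset \PGL_{g+2}(\mathbb{C})$ acting linearly. For each value $g \in \{2,3,4\}$ I would invoke \cite[Chapter 4 or Table 12.4]{FANO} to describe $X$ concretely: for $g=2$ it is a double cover of $\p^3$ branched in a sextic (hence handled by Lemma \ref{lemma_Gbundle}, giving $G\subset\Aut(\p^3)=\PGL_4(\mathbb{C})$ and $4$ generators by Proposition \ref{gl4}); for $g=3$ it is either a quartic in $\p^4$ or a double cover of a quadric; and for $g=4$ it is a complete intersection of a quadric and a cubic in $\p^5$.

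For the genuine-embedding subcases the first step is to linearize: since $\phi_{|-K_X|}$ is $G$-equivariant, $G$ embeds into $\PGL_{g+2}(\mathbb{C})$, and I would lift to a $3$-subgroup of $\GL_{g+2}(\mathbb{C})$ using the fact that a central extension of a $3$-group by the cyclic scalars can be controlled (and that Sylow $3$-subgroups surject, as in Lemma \ref{generators_of_group_extension}). The quartic threefold with $g=3$ lives in $\p^4$, so $G\subset\PGL_5(\mathbb{C})$ and Proposition \ref{gl4} directly gives the bound $4$. The remaining delicate case is $g=4$, where $X = Q\cap F \subset \p^5$ with $Q$ a quadric and $F$ a cubic. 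The strategy mirrors the del Pezzo degree-$4$ argument in Corollary \ref{corollary_3gr_on_dP}: here there is a unique quadric $Q$ through $X$ (the pencil degenerates to a single quadric in the index-$1$, $g=4$ case, so $Q$ is automatically $G$-invariant), hence $G$ acts faithfully on the quadric fourfold $Q\subset\p^5$, and Lemma \ref{lemma_3gr_on_quadric} then yields that $G$ is generated by $4$ elements.

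The main obstacle I anticipate is handling the quadrics that arise: both the $g=4$ case and the double-cover-of-a-quadric subcase of $g=3$ require knowing that $G$ preserves a quadric and then controlling $3$-subgroups of its automorphism group. For $g=3$, if $X\to Q$ is a double cover of a quadric threefold, Lemma \ref{lemma_Gbundle} forces $G\subset\Aut(Q)$, and again Lemma \ref{lemma_3gr_on_quadric} bounds the number of generators by $3$ (the dimension of $Q$). The key technical point throughout is that passing from $X$ to its ambient $\p^{g+1}$, quadric, or base of a double cover never loses faithfulness of the $G$-action and never increases the required number of generators beyond $4$; the bounds from Proposition \ref{gl4} (for projective spaces) and Lemma \ref{lemma_3gr_on_quadric} (for quadrics) supply exactly the values needed in dimensions up to $4$. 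Assembling the cases, every terminal Gorenstein Fano threefold of Picard number $1$, index $1$, and genus $g<5$ yields a $3$-subgroup $G$ generated by at most $4$ elements.
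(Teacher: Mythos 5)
Your proposal follows essentially the same route as the paper's proof: reduce to the case where $|-K_X|$ is an embedding (the paper additionally cites Lemma~\ref{Fano_rho=1_Base_pts} for the possible base locus, which your reduction tacitly needs alongside Lemma~\ref{Fano_rho=1_2-coverings}), then for $g=3$ use $G\subset\PGL_5(\mathbb{C})$ with Proposition~\ref{gl4}, and for $g=4$ use the unique (hence $G$-invariant) quadric through $X=Q\cap F\subset\p^5$ together with Lemma~\ref{lemma_3gr_on_quadric}. The extra explicit treatment of $g=2$ and the double-cover-of-a-quadric subcase is subsumed in the paper by the non-embedding reduction, so the argument is correct and matches the paper's.
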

\begin{proof}  
If the linear system $|-K_X|$ does not induce an embedding, then by Lemmas \ref{Fano_rho=1_Base_pts} and \ref{Fano_rho=1_2-coverings}
the group $G$ can be generated by $3$ elements. From now on we assume, that $|-K_X|$ induces an embedding to a 
projective space $\p^{g+1}$.
  
If $g=3$ and $|-K_X|$ induces an embedding to $\p^4$, then $G$ is a subgroup of  $\PGL_5(\mathbb{C})$. 
Proposition \ref{gl4} implies that $G$ can be generated by $4$ elements.
   
If $g=4$ then $|-K_X|$ induces an embedding of $X$ to $\p^5$, then by \cite[Proposition 4.1.12]{FANO}
the variety $X$ is an intersection of a cubic and a quadric hypersurfaces. Thus, $G$ preserves a $4$-dimensional
quadric passing through $X$. By Lemma  \ref{lemma_3gr_on_quadric}, the group $G$ can be generated by $4$ elements.
\end{proof}
\begin{lemma}\label{lemma_3gr_g_equals_5}
Assume that $X$ is a terminal Gorenstein Fano threefold with Picard number $1$, index~$1$ and genus $g=5$.
If $G$ is a $3$-subgroup of $\Aut(X)$ then it can be generated by $4$ elements.
\end{lemma}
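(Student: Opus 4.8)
The plan is to follow the pattern of Lemma \ref{lemma_3gr_g_less_5}. First I would dispose of the case where $|-K_X|$ is not very ample: if the anticanonical map is not an embedding, then by Lemmas \ref{Fano_rho=1_Base_pts} and \ref{Fano_rho=1_2-coverings} the group $G$ is already generated by three elements. So from now on I would assume that $|-K_X|$ embeds $X$ into $\p^{g+1}=\p^6$. By the classification of anticanonically embedded Fano threefolds of genus $5$ (see \cite[\S4.1]{FANO}), either the quadrics through $X$ cut it out scheme-theoretically, so that $X$ is a complete intersection of three quadrics, or the intersection of all quadrics through $X$ is a four-dimensional variety $Y$ of minimal degree (the trigonal case). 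In the latter case every automorphism of $X$ extends linearly to $\p^6$ and preserves $Y$, so $G$ embeds into $\Aut(Y)$ with $\dim Y=4<7$, and Lemma \ref{var_of_min_degree} gives the bound of four generators directly.

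It remains to treat the main case, where $X$ is the base locus of a net $N=\p(W)\cong\p^2$ of quadrics, with $W\subset H^0(\p^6,\mathcal{O}(2))$ three-dimensional. Since $G$ preserves $X$, it preserves $W$, and I would study the resulting extension
\begin{equation*}
 1\to H\to G\to \bar{G}\to 1,
\end{equation*}
where $\bar{G}\subset\PGL(W)\cong\PGL_3(\mathbb{C})$ is the image and $H$ is the kernel of the action on $N$. The key claim is that $H$ is trivial. An element of $H$ acts trivially on $\p(W)$, hence acts on $W$ by a single scalar $c$, so it multiplies every quadric of the net by the same constant $c$. Now a general pencil inside the net is regular, so in suitable coordinates it is spanned by $q_0=\sum_{i=1}^{7}x_i^2$ and $q_1=\sum_{i=1}^{7}\lambda_i x_i^2$ with pairwise distinct $\lambda_i$. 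Its singular members are the seven quadrics $q_1-\lambda_i q_0=\sum_{j\ne i}(\lambda_j-\lambda_i)x_j^2$, whose vertices are the coordinate points $e_1,\dots,e_7$ spanning $\p^6$. An element $h\in H$ sends each $q_1-\lambda_i q_0$ to $c\,(q_1-\lambda_i q_0)$, hence fixes every vertex $e_i$; fixing these seven points forces a lift of $h$ to be diagonal, and preserving $q_0$ up to the scalar $c$ forces its diagonal entries to agree up to sign. Thus $h$ has order at most two in $\PGL_7(\mathbb{C})$, and since $H$ is a $3$-group we conclude $H=1$.

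Therefore $G\cong\bar{G}$ is a $3$-subgroup of $\PGL_3(\mathbb{C})$, and any such subgroup is generated by at most two elements, which is well within the asserted bound of four. The main obstacle I expect is not the group theory but pinning down the geometric input: extracting from \cite{FANO} that the only alternative to a complete intersection of three quadrics is a four-dimensional variety of minimal degree, and verifying that the net attached to a (possibly singular, terminal Gorenstein) $X$ always contains a regular pencil, so that the spanning argument for the vertices $e_1,\dots,e_7$ is legitimate.
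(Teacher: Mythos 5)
Your architecture is genuinely different from the paper's, and much of it is sound (the reduction via Lemmas \ref{Fano_rho=1_Base_pts} and \ref{Fano_rho=1_2-coverings}, the trigonal alternative handled by Lemma \ref{var_of_min_degree}, and the group theory at the end). But the pivotal claim of your main case --- that the kernel $H$ of the action on the net is trivial --- rests on the step you yourself flag and do not prove: that the net of quadrics through a possibly singular terminal Gorenstein $X$ contains a regular pencil, i.e.\ one with seven distinct corank-one degenerate members, so that the vertices $e_1,\dots,e_7$ span $\p^6$. This is a genuine gap, not a routine verification, because the claim is false without terminality. Take $h=\mathrm{diag}(1,1,1,\zeta,\zeta,\zeta^2,\zeta^2)$; the $h$-invariant quadrics are exactly those of the form $A(x_1,x_2,x_3)+B\bigl((x_4,x_5),(x_6,x_7)\bigr)$ with $B$ bilinear, and a net of such quadrics has discriminant $\det A\cdot(\det B)^2$, which is a non-reduced septic, so \emph{no} pencil in it is regular, while $h$ is an order-$3$ element acting trivially on the net --- precisely the configuration your argument must exclude. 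One checks that here the base locus $X$ contains the line $\{x_1=x_2=x_3=x_6=x_7=0\}$ and is singular along it, hence is not terminal; this suggests your claim is true, but a proof must show in general that a multiple component of the discriminant, or a generically singular member of the net, forces either a point of $X$ with Zariski tangent space of dimension at least $5$ (excluded at $cDV$ points by \cite[Corollary 3.12]{YPG}, as used in Lemma \ref{stabilizer_of_singularity}) or a non-isolated singular locus (excluded by terminality). None of this appears in your proposal, and it is the technical heart of your main case.

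The paper sidesteps all of this with a more robust device: since every irreducible representation of a $3$-group has dimension a power of $3$, the $7$-dimensional representation on $H^0(X,\mathcal{O}(-K_X))^{\vee}$ contains a $4$-dimensional subrepresentation, giving a $G$-invariant $\Pi\cong\p^3\subset\p^6$; then $Y=\Pi\cap X$ is a nonempty proper intersection of three quadric surfaces in $\p^3$, Lemma \ref{lemma_intersection_3_quadrics} yields a subgroup of index at most $3$ fixing a point of $X$, and Lemmas \ref{stabilizer_of_singularity} and \ref{subgroup_of_small_index} give the bound $3+1=4$. This needs no nondegeneracy of the net whatsoever. Your route, if the regular-pencil claim were established, would buy a stronger conclusion in the complete-intersection case, namely $G\hookrightarrow\PGL_3(\mathbb{C})$ and hence $2$ generators by Lemma \ref{lemma_gl3}; and your explicit treatment of the trigonal alternative is, if anything, more careful than the paper, which applies \cite[Proposition 4.1.12]{FANO} to terminal Gorenstein $X$ without comment. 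But as written, the proposal does not prove the lemma.
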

\begin{proof}
By Lemmas \ref{Fano_rho=1_Base_pts} and \ref{Fano_rho=1_2-coverings} we can assume that $|-K_X|$ 
induces an embedding.

By \cite[Proposition 4.1.12]{FANO} the threefold $X$ is isomorphic to the intersection of $3$ quadric
hypersurfaces $Q_1$, $Q_2$ and $Q_3$ in $\p^6$. The group $G$ acts on the linear system $|-K_X|$. Thus,
the Sylow subgroup of $\widetilde{G}$ the preimage of $G$ in  $\GL_7(\mathbb{C})$ has the representation 
in the vector space $H^0(X, \mathcal{O}(K_X)))^{\vee}$ of dimension 7. There exists a $4$-dimensional 
subrepresentation of $\widetilde{G}$ in $H^0(X, \mathcal{O}(K_X)))^{\vee}$. Therefore, we can find a 
$G$-invariant subspace $\Pi \cong \p^3$ of dimension $3$.

Consider the intersection $Y = \Pi\subset X$. Since $X$ is irreducible, it is not empty and proper in $\Pi$.
Moreover, $Y$ is $G$-invariant subvariety of $X$. By Lemma \ref{lemma_intersection_3_quadrics} there is a subgroup $H$
of $G$ of index less than or equal to $3$ preserving a point $y$ on $Y$. Then Lemmas \ref{stabilizer_of_singularity}
and \ref{subgroup_of_small_index} implies that  $G$ can be generated by $4$ elements.  
\end{proof}
  
\begin{lemma}\label{lemma_3gr_g_equals_6}
Assume that $X$ is a terminal Gorenstein Fano threefold with Picard number $1$, index~$1$ and genus $g=6$.
If $G$ is a $3$-subgroup of $\Aut(X)$ then it can be generated by $4$ elements.
\end{lemma}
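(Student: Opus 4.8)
The plan is to realize $X$ inside a Grassmannian and then reduce the generator count to the case of $\PGL$ acting on a projective space. First I would dispose of the degenerate cases: if the base locus of $|-K_X|$ is nonempty, or if $|-K_X|$ induces a regular map that is not an embedding, then Lemmas \ref{Fano_rho=1_Base_pts} and \ref{Fano_rho=1_2-coverings} already give that $G$ is generated by three elements. So from now on I assume that $|-K_X|$ embeds $X$ anticanonically as a threefold of degree $2g-2=10$ in $\p^7$; by the classification of genus-$6$ terminal Gorenstein Fano threefolds (Mukai's description, see \cite{Muk88} and \cite{FANO}) $X$ is then a Gushel--Mukai threefold.

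Next I would use the Mukai model. For such $X$ there is a canonical \emph{Gushel map} $\gamma\colon X \to \Gr(2,V)$ into the Pl\"ucker-embedded Grassmannian, with $\dim V = 5$, under which $-K_X$ pulls back from the Pl\"ucker polarization. Since $\gamma$ is intrinsic to $X$, it is $G$-equivariant, so $G$ acts on $\Gr(2,5)$. By Proposition \ref{GR}, because $\dim V = 5 \neq 4 = 2\cdot 2$, we have $\Aut(\Gr(2,5)) \cong \PGL(V) = \PGL_5(\C)$.

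I would then split according to the behaviour of $\gamma$. In the ordinary (Mukai) case $\gamma$ is an embedding realizing $X$ as $\Gr(2,5)\cap Q \cap \p^7$, so $\Aut(X)$ injects into $\Aut(\Gr(2,5))$ and hence $G \hookrightarrow \PGL_5(\C) = \Aut(\p^4)$; Proposition \ref{gl4}, applied with $d=4$, shows that $G$ is generated by four elements. In the special (Gushel) case $\gamma$ is two-to-one onto the quintic del Pezzo threefold $V_5 = \Gr(2,5)\cap \p^6$, exhibiting $X$ as a double cover of $V_5$; since $G$ is a three-group the deck involution is not in $G$, so by Lemma \ref{lemma_Gbundle} we get $G \subset \Aut(V_5) \cong \PGL_2(\C)$, which is cyclic by Lemma \ref{gl2} (as already recorded for degree $5$ in Corollary \ref{corollary_3gr_on_dP}). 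In either case $G$ is generated by at most four elements.

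The main obstacle is the middle step: I must be certain that the Gushel map is genuinely canonical and therefore $G$-equivariant, so that the $\PGL(V)$-action on $\Gr(2,5)$ is forced rather than merely available for one chosen model, and that this carries over to the terminal Gorenstein (possibly singular) $X$ and not only to the smooth members of the family. This is precisely what I would import from Mukai's reconstruction of the Grassmannian from the anticanonical threefold; once it is in hand, the concluding bounds from Proposition \ref{gl4} and Lemma \ref{gl2} are routine.
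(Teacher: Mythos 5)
Your outline matches the paper's strategy in its main branch --- reduce to the Gushel--Mukai situation, use the canonical map to $\Gr(2,5)$, and conclude via Proposition \ref{GR} and Proposition \ref{gl4} (with Lemma \ref{lemma_Gbundle} disposing of the double-cover case) --- but it has one genuine gap. You pass from ``$|-K_X|$ induces an embedding'' directly to ``$X$ is a Gushel--Mukai threefold'' by appealing to Mukai's classification. Mukai's description is a theorem about \emph{smooth} prime Fano threefolds of genus $6$; the lemma is stated for terminal Gorenstein $X$, which may be singular, and for such $X$ one cannot exclude a priori that the anticanonical image in $\p^7$ fails to be cut out by quadrics (the trigonal-type case). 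This is exactly the case your argument never addresses, and your closing paragraph defers it to ``Mukai's reconstruction,'' which does not cover it.

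The paper's proof handles this by letting $Y$ be the intersection of all quadrics in $\p^7$ containing $X$, which is automatically $G$-invariant. If $Y\neq X$, then $Y$ is a fourfold of minimal degree (by the argument in the proof of \cite[Proposition 2.3]{Isk_78}), and Lemma \ref{var_of_min_degree} with $d=4$ already gives four generators --- no Gushel--Mukai structure is needed or available here. Only when $Y=X$, i.e.\ $X$ is an intersection of quadrics, does the paper conclude that $X$ is Gushel--Mukai, and it does so not via Mukai's smooth classification but via \cite[Propositions 2.12 and 2.15]{Gushel_map}: a genus-$6$ curve section of $X$ is again an intersection of quadrics, hence Clifford general, and this characterizes GM varieties in a form valid for the singular Gorenstein members. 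From that point on your argument agrees with the paper's: the map to $\Gr(2,5)$ is canonical hence $G$-equivariant, $\Aut(\Gr(2,5))\cong\PGL_5(\C)$ since $5\neq 4$, and Proposition \ref{gl4} gives the bound $5-1=4$ (your finer observation that in the Gushel case $G$ lands in $\Aut$ of the quintic del Pezzo threefold and is therefore cyclic is correct but unnecessary). To repair your proposal, you must add the dichotomy on $Y$ and replace the appeal to Mukai's theorem by the Clifford-generality criterion.
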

\begin{proof}
By Lemmas \ref{Fano_rho=1_Base_pts} and \ref{Fano_rho=1_2-coverings} we can assume that $|-K_X|$ 
induces an embedding.

Denote by $Y$ the intersection of all quadric hypersurfaces in the linear system $|-K_X|$ passing through $X$ (c.f. \cite[Example 4.3.3]{X10}).
It is a $G$-invariant variety; thus, $G\subset\Aut(Y)$.
If $Y$ does not coinside with $X$, then it is of dimension $4$ and it is a variety of minimal degree 
(see, for instance, proof of \cite[Proposition 2.3]{Isk_78}). Therefore, by Lemma \ref{var_of_min_degree}
the group $G$ can be generated by $4$ elements.

Now assume that $X$ coinsides with the intersection of all quadratic hypersurfaces passing through it.
Any curve in the intersection of $X$ and two hyperplane section of $X$ is an intersection of quadric hypersurfaces too.
Its genus equals to 6.
By \cite[Proposition 2.12]{Gushel_map} this is a  Clifford general curve. By  \cite[Proposition 2.15]{Gushel_map}
the threefold $X$ is a Gushel--Mukai variety \cite[Definition 2.1]{Gushel_map}. Thus, there exists an 
embedding or a double cover~\mbox{$X \to \Gr(2,5)$} and this map is $\Aut(X)$ invariant. Then by
Lemma \ref{lemma_Gbundle} the group $G$ is a subgroup of $\Aut(\Gr(2,5))$. By Propositions \ref{GR}  and \ref{gl4}
the group $G$ can be generated by $4$ elements.
\end{proof}

\begin{lemma}[{c.f. \cite[Lemma 7.6]{p-subgroups}}]\label{lemma_qfact}
Assume that $X$ is a terminal Gorenstein $G\Q$-Fano threefold with Picard number $1$, index~$1$ and genus $g=8,\ 9$ or $12$.
If $G$ is a $3$-group then $X$ is a $\mathbb{Q}$-factorial variety.
\end{lemma}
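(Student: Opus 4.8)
The plan is to show that the divisor class group $\mathrm{Cl}(X)$ has rank $1$; since $\rho(X)=1$ and $\mathrm{Pic}(X)$ is torsion-free, this is equivalent to $\Q$-factoriality. Because $X$ is terminal and Gorenstein, its singularities are isolated $cDV$ points, and $G$ acts on the finitely generated lattice $\mathrm{Cl}(X)$ preserving the invariant class $-K_X=H$. I set $\sigma=\mathrm{rank}\,\mathrm{Cl}(X)-1$ and aim to prove $\sigma=0$. The first point is that the hypothesis ``$X$ is a $\GQ$-Fano threefold'' means precisely that $X$ is $\GQ$-factorial, i.e.\ every $G$-invariant Weil divisor is $\Q$-Cartier; equivalently $(\mathrm{Cl}(X)\otimes\Q)^G=\Q\cdot[H]$ is one-dimensional. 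Thus $G$ acts on the $\sigma$-dimensional space $(\mathrm{Cl}(X)\otimes\Q)/\Q[H]$ with no nonzero invariants.

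The next step is where I would use that $G$ is a $3$-group. A one-dimensional rational representation of any group is a homomorphism into the torsion subgroup $\{\pm1\}$ of $\Q^{\times}$, so a $3$-group can act only trivially on a rank-one lattice and always has nonzero invariants there. Hence the defect cannot equal $1$: either $\sigma=0$, in which case we are done, or $\sigma\geqslant 2$. From now on I assume $\sigma\geqslant2$ and look for a contradiction.

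Assuming $X$ is not $\Q$-factorial, I would pass to a $G$-equivariant small $\Q$-factorialization $\pi\colon\widehat X\to X$, produced by running a $G$-equivariant relative minimal model program over $X$. Since $\pi$ is small and $X$ is Gorenstein terminal, $\widehat X$ is again Gorenstein terminal with $K_{\widehat X}=\pi^{*}K_X$; thus $-K_{\widehat X}$ is nef and big, $\widehat X$ is a weak Fano threefold of index $1$ with $(-K_{\widehat X})^{3}=(-K_X)^{3}=2g-2$, and $\rho(\widehat X)=1+\sigma\geqslant3$ while $\rho(\widehat X)^G=1$ (a small morphism does not change the class group, and $\widehat X$ is $\Q$-factorial). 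The anticanonical morphism of $\widehat X$ is exactly $\pi$, and its contracted curves---those lying over the singular points of $X$---span the $K_{\widehat X}$-trivial part of $\overline{NE}(\widehat X)$ and are permuted by $G$.

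The contradiction should then come from the geometry of the genera $g=8,9,12$, for which $(-K_X)^{3}$ equals $14,16,22$, the largest anticanonical degrees occurring for Fano threefolds of Picard number one. I would bound the number and the $G$-orbit structure of the singular points of $X$ through the Namikawa inequality \eqref{number_of_sing} together with \cite[Tables 12.6--12.7]{FANO}, and then invoke the classification of weak Fano threefolds to rule out a weak Fano $\widehat X$ of index $1$, of this degree, and of Picard number at least $3$ carrying a $3$-group action with $\rho(\widehat X)^G=1$, concluding $\sigma=0$. I expect this last step to be the main obstacle: unlike the $\rho>1$ situation covered by Theorem \ref{Fano_big_rho}, the Picard number of $\widehat X$ cannot be lowered by an equivariant contraction, since the extra extremal rays are $K_{\widehat X}$-trivial flopping rays rather than $K$-negative ones, so one cannot reduce to a Fano and read off the answer from that classification. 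Moreover the genus $12$ case lacks a linear-section description inside a homogeneous space, so it must be handled separately using the specific structure of the family $V_{22}$ rather than by a uniform argument that works for $g=8,9$.
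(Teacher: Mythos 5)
Your setup (pass to a $G$-equivariant small $\Q$-factorialization $\widehat X\to X$, note $\widehat X$ is terminal Gorenstein weak Fano with $K_{\widehat X}=\pi^*K_X$, the extra extremal rays are $K$-trivial flopping rays permuted by $G$) is correct as far as it goes, but the proof stops exactly where the real work begins, and you say so yourself: you never derive the contradiction, and the fallback you propose --- invoking a ``classification of weak Fano threefolds'' of index $1$, degree $14$, $16$ or $22$ and $\rho\geqslant 3$ with a $3$-group action --- is not an available tool and is not what the paper does. That is a genuine gap, not a missing detail. The paper's proof instead imports the quantitative mechanism of \cite[Lemma 7.6]{p-subgroups}: if $X$ is not $\Q$-factorial, then $X$ is birational to a terminal Gorenstein Fano threefold $Y$ with
\begin{equation*}
-K_Y^3 \geqslant 2g(X)-2+(3-1)\bigl(4g(X)-6\bigr)\geqslant 66 \quad\text{for } g\geqslant 8,
\end{equation*}
whereas every terminal Gorenstein Fano threefold has anticanonical degree at most $64$: by Namikawa smoothing \cite{Fano_smoothing} the degree equals that of a smooth fiber of the smoothing family, and smooth Fano threefolds have $-K^3\leqslant 64$ by \cite[Tables 12.6--12.7]{FANO}. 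Contradiction, uniformly in $g=8,9,12$, with no case analysis.

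The conceptual point you missed is precisely the one you flagged as the obstacle: one does not stop at the weak Fano $\widehat X$ just because its extra rays are $K$-trivial. The argument of \cite[Lemma 7.6]{p-subgroups} continues the equivariant two-ray game through flops, after which a $K$-negative (divisorial or fiber-type) contraction appears on a flopped model, and the output Fano $Y$ has strictly larger degree; the factor $(3-1)$ in the inequality is where the hypothesis that $G$ is a $3$-group enters, through the length of the relevant $G$-orbit. Two smaller remarks: your preliminary observation that the defect $\sigma$ cannot equal $1$ (a $3$-group acts trivially on a rank-one lattice) is correct but plays no role in the actual proof; and your suggestion that $g=12$ requires a separate treatment via the structure of $V_{22}$ is unnecessary --- the degree bound $66>64$ disposes of all three genera at once, which is exactly what makes this route so short.
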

\begin{proof}
As in proof of  \cite[Lemma 7.6]{p-subgroups} we can show that if $X$ is not a $\mathbb{Q}$-factorial variety,
then it is birational to a terminal Gorenstein Fano threefold $Y$ with the following condition
on the degree of the canonical class:
  \begin{equation*}
   -K_Y^3 \geqslant 2g(X) - 2 + (3-1)(4g(X) - 6) \geqslant 66.
  \end{equation*}
By \cite[Tables 12.6--12.7]{FANO} the degree of any smooth Fano variety less than 65. Since the degrees of the canonical 
classes are same in all fibers in the Namikawa smoothing \cite{Fano_smoothing} we get a contraction. Thus, $X$ is 
a $\mathbb{Q}$-factorial variety.
\end{proof}

Now we study singular Fano varieties of genus greater than or equal to $8$.
\begin{lemma}\label{lemma_r1_g8912}
Assume that $X$ is a terminal Gorenstein $G\Q$-Fano threefold with Picard number $1$, index~$1$ and genus $g=8,\ 9$ or $12$.
If $G$ is a $3$-group then it can be generated by $4$ elements.
\end{lemma}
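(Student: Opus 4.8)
The plan is to treat the smooth and the singular case separately, since the Namikawa-smoothing bound of Corollary \ref{corollary_namikawa_imply} is only available when $X$ is singular. If $X$ is smooth I would appeal to the classification of automorphism groups of smooth Fano threefolds with Picard number one in \cite{Hilb_Fano}: for each of the three families (of genus $8$, $9$ and $12$) the group $\Aut(X)$ is described explicitly. Its identity component is one of the groups occurring in that classification (such as $\mathbb{G}_a$, $\mathbb{G}_m$, $\mathbb{G}_a\rtimes\mathbb{G}_m$ or $\PGL_2(\C)$), all of whose finite $3$-subgroups are cyclic (cf.\ Lemma \ref{gl2}), while the group of connected components is finite of bounded order. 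Combining the two contributions with Lemma \ref{generators_of_group_extension}, one checks case by case that any $3$-subgroup of $\Aut(X)$ is generated by at most $4$ elements.

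Now suppose that $X$ is singular. By Lemma \ref{lemma_qfact} it is $\mathbb{Q}$-factorial, and it admits a Namikawa smoothing \cite{Fano_smoothing} whose general fibre is a smooth Fano threefold of the same index $r=1$ and the same genus $g>6$. Corollary \ref{corollary_namikawa_imply} then shows at once that $G$ is generated by $5$ elements, and by $4$ elements whenever the number $N$ of singular points of $X$ is different from $9$ and $18$. Thus only the cases $N=9$ and $N=18$ remain. Analysing the $G$-action on the finite set of singular points, every orbit has length a power of $3$, and an orbit of length at most $3$ yields a stabilizer of index at most $3$, which by Lemma \ref{stabilizer_of_singularity} and Lemma \ref{subgroup_of_small_index} already gives $4$ generators. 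Hence the only configuration left to exclude is the one in which the singular points form a single orbit of length $9$ (when $N=9$) or two orbits of length $9$ (when $N=18$).

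The heart of the argument, and the step I expect to be the main obstacle, is to rule out this last configuration. For this I would use the detailed study of the singularities of Fano threefolds of genus $8$, $9$ and $12$ in \cite{sing_X8912}, together with the Mukai realization of $X$ inside the relevant homogeneous space: $\Gr(2,6)$ for $g=8$ (where $\Aut(\Gr(2,6))\cong\PGL_6(\C)$ by Proposition \ref{GR}), the Lagrangian Grassmannian for $g=9$, and the special $V_{22}$-model for $g=12$. Concretely, I would show that for these genera the singular points of $X$ cannot split into $G$-orbits of length $9$ only: either the bounds of \cite{sing_X8912} on the number and type of singularities exclude the values $N=9$ and $N=18$ (or force $N<9$), so that Lemma \ref{lemma_less_27_singularities} applies, or the homogeneous-space structure yields a further $G$-invariant subvariety --- for instance an invariant linear section of the ambient space, in the spirit of the genus $5$ and $6$ cases --- carrying a $G$-orbit of length $<9$, and one concludes again by Lemma \ref{stabilizer_of_singularity} and Lemma \ref{subgroup_of_small_index}. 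I expect the genus $9$ and $12$ cases to be the most delicate, since there the ambient variety is not an ordinary Grassmannian and Proposition \ref{GR} is unavailable.
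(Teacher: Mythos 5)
Your singular-case plan contains the paper's actual proof as one of its branches, but you stop short of carrying out exactly the step that matters. The paper's argument is two lines long: after $\mathbb{Q}$-factoriality (Lemma \ref{lemma_qfact}), it invokes \cite[Theorem 1]{sing_X8912}, which states that such a threefold has at least $1$ and at most $5$ singular points; since the singular locus is $G$-invariant and every $G$-orbit has $3$-power length, some singular point has an orbit of length $1$ or $3$, and the $N<9$ clause of Lemma \ref{lemma_less_27_singularities} immediately gives $4$ generators. Your ``first alternative'' (that the bounds of \cite{sing_X8912} force $N<9$) is precisely what happens, but you present it as one of two unverified options and defer to a speculative analysis of the Mukai models in $\Gr(2,6)$, the Lagrangian Grassmannian and the $V_{22}$-model --- an analysis you yourself flag as the main obstacle and never carry out. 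As written the proof is therefore incomplete at its crux: without committing to the actual content of \cite[Theorem 1]{sing_X8912}, nothing rules out the configurations with all orbits of length $9$, and once that theorem is committed to, the entire Namikawa-smoothing detour through Corollary \ref{corollary_namikawa_imply} and the discussion of $N=9$ and $N=18$ become superfluous.

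Your smooth case rests on a false premise. The reference \cite{Hilb_Fano} does not describe $\Aut(X)$ explicitly as an abstract group for genus $8$ and $9$: for those genera the automorphism group is finite but not classified there, and knowing that the identity component is trivial (or, for special genus-$12$ threefolds, one of the small linear algebraic groups you list) together with mere finiteness of the component group yields no bound whatsoever on the number of generators of a $3$-subgroup --- the ``case by case check'' you appeal to has nothing to check against. The working mechanism, which is the one the paper uses, is the faithful action of $\Aut(X)$ on the Hilbert scheme of conics $S(X)$ (Proposition \ref{Conic_on_Fano}): a $\p^1$-bundle over a genus-$3$ curve for $g=9$, the plane $\p^2$ for $g=12$, and for $g=8$ the induced faithful action on the associated cubic threefold (Lemmas \ref{lemma_Fano_r1_g7910} and \ref{lemma_Fano_r1_g8}). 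To be fair, the paper's own proof of this lemma also addresses only the singular case --- the cited theorem guarantees at least one singular point, and smooth threefolds of these genera are handled by the separate lemmas just mentioned --- so your instinct to split into smooth and singular cases matches the paper's global structure; but your smooth-case mechanism, as proposed, would not assemble into a proof.
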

\begin{proof}
By Lemma \ref{lemma_qfact} the variety $X$ is $\mathbb{Q}$-factorial. By \cite[Theorem 1]{sing_X8912} there are 
at least 1 and at most $5$ singular points on $X$. Thus, Lemma \ref{lemma_less_27_singularities} implies the result.
\end{proof}

Now we study smooth Fano threefolds of genus greater than or equal to $7$. In order to do 
consider a smooth Fano threefold and its very ample linear system $|-K_X|$. Denote by $S(X)$ the Hilbert scheme of conics on $X$ 
in the anticanonical embedding. We recall the following set of facts about these Hilbert 
schemes\mbox{\cite[Theorem 1.1.1, Lemma 4.2.1, Lemma 4.3.4, Corolla\-ry~4.3.5]{Hilb_Fano}}.
\begin{proposition}\label{Conic_on_Fano} 
   If $X$ is a smooth Fano threefold of index $1$ and $g(X)\geqslant 6$, then~$S(X)$ is a smooth surface and
   \begin{enumerate}
    \item[(i)] if $g=7$, then $S(X)$ is a symmetric square of a smooth curve $C$ of genus~$7$;
    \item[(ii)] if $g = 9$, then $S(X)$ is a projectivization of a vector bundle of rank $2$ over a curve of genus $3$;
    \item[(iii)] if $g = 10$, then $S(X)$ --- is a Jacobian $\mathrm{Jac}(C)$  of a smooth curve $C$ of genus $2$ ;
    \item[(iv)] if $g = 12$, then $S(X)\cong \p^2$.
   \end{enumerate}
   Moreover, the group $\Aut(X)$ acts faithfully on $S(X)$ and in case of $g=7$ on the curve $C$.
\end{proposition}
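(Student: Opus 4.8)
The statement is a compilation of \cite[Theorem 1.1.1, Lemma 4.2.1, Lemma 4.3.4, Corollary 4.3.5]{Hilb_Fano}, so the plan is to extract from \emph{loc. cit.} the four case-by-case identifications of $S(X)$ together with the faithfulness of the induced action, rather than to reprove them from scratch. I indicate how each piece is obtained.

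First I would record that $S(X)$ is a smooth surface. For a conic $C\subset X$ in the anticanonical embedding one computes the normal bundle $N_{C/X}$ and verifies that $H^1(C,N_{C/X})=0$, so that the Hilbert scheme is unobstructed at $[C]$ and smooth of the expected dimension $\chi(N_{C/X})=2$; the hypothesis $g(X)\geqslant 6$ is what guarantees this uniformly, since in this range $X$ is cut out by quadrics (or is closely related to such a variety), which rigidifies the family of conics.

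Next, for each of the four genera I would pass through Mukai's realization of the prime Fano threefold $X$ as a linear section, or a double cover of a linear section, of a homogeneous space, and translate conics on $X$ into geometric data on the ambient space. For $g=12$ the threefold is of type $V_{22}$ and the conics are parametrized by $\p^2$; for $g=10$ it is a section of the $G_2$-Grassmannian and the conic parameter space is an abelian surface, identified with $\mathrm{Jac}(C)$ for a smooth curve $C$ of genus $2$; for $g=9$ it is a section of the Lagrangian Grassmannian and $S(X)$ fibers over a genus-$3$ curve with $\p^1$-fibers, whence it is the projectivization of a rank-$2$ bundle; for $g=7$ it is a section of the spinor tenfold and $S(X)$ is the symmetric square of a smooth curve $C$ of genus $7$. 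In each case the claimed model of $S(X)$ is exactly what the cited lemmas provide.

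Finally, the faithfulness of the $\Aut(X)$-action is the part that needs the most care. Functoriality of the Hilbert scheme yields a homomorphism $\Aut(X)\to\Aut(S(X))$, and I would argue that its kernel is trivial: an automorphism lying in the kernel fixes every conic of $X$ as a subscheme, hence fixes a dense set of points of $X$ (since conics sweep out $X$ and a general point lies on only finitely many of them), and is therefore the identity. When $g=7$ the identification $S(X)\cong\mathrm{Sym}^2 C$ is canonical, so the action of $\Aut(X)$ descends to a faithful action on $C$, giving the last assertion. The only genuinely laborious ingredient is the uniform normal-bundle and parametrization analysis behind the four identifications, but as we only need the conclusions recorded in \cite{Hilb_Fano}, it suffices to invoke them.
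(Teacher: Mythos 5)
Your proposal matches the paper exactly: the paper offers no argument of its own for Proposition \ref{Conic_on_Fano}, but simply recalls it from \cite[Theorem 1.1.1, Lemma 4.2.1, Lemma 4.3.4, Corollary 4.3.5]{Hilb_Fano}, which is precisely what you do, and your supplementary sketches (smoothness via unobstructedness of conics, the Mukai models for $g=7,9,10,12$) are consistent with how those cited results are actually proved. One caveat: your faithfulness sketch is slightly loose --- an automorphism fixing every conic setwise only sends a point $x$ into the intersection of the finitely many conics through $x$, so one still needs that this intersection is $\{x\}$ for general $x$ --- but since you explicitly defer to the cited lemmas for this part as well, the gap is immaterial.
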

This assertion implies the bound for smooth Fano threefolds of big genus.
\begin{lemma}\label{lemma_Fano_r1_g7910}
Assume that $X$ is a smooth Fano threefold of index $1$ and $g(X)=7,\ 9, \ 10$ or $12$.
Then a $3$-subgroup~$G$ in $\Aut(X)$ can be generated by $4$ elements.
\end{lemma}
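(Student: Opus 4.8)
The plan is to feed each of the four genus values into Proposition \ref{Conic_on_Fano}, which identifies the Hilbert scheme $S(X)$ of conics as a concrete surface on which $\Aut(X)$ — and hence the $3$-group $G$ — acts faithfully. In every case the task reduces to bounding the number of generators of a $3$-subgroup of $\Aut(S(X))$ (or of $\Aut(C)$ in the genus $7$ case), and for each of the four resulting surfaces we already have the needed machinery from \S3. Since the action of $G$ on $X$ is faithful and the $\Aut(X)$-action on $S(X)$ is faithful by the last sentence of Proposition \ref{Conic_on_Fano}, the composite action of $G$ on $S(X)$ is faithful, so no generator is lost in passing from $X$ to $S(X)$.

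Concretely I would argue case by case. For $g=7$, Proposition \ref{Conic_on_Fano}(i) says $S(X)$ is the symmetric square of a smooth curve $C$ of genus $7$, and the proposition guarantees that $G$ in fact acts faithfully on $C$ itself; then the estimate \eqref{eq_estimation_on_curve} gives $|G|\leqslant 16(g(C)-1)=96$, and since $|G|$ is a power of $3$ we get $|G|\leqslant 81$, so $G$ is generated by at most $4$ elements by Lemma \ref{subgroup_of_small_index}. For $g=9$, part (ii) identifies $S(X)$ with the projectivization of a rank $2$ vector bundle over a curve of genus $3$, and this is exactly the surface treated in Lemma \ref{lemma_proj_over_curve}, which already yields the bound $4$. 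For $g=10$, part (iii) gives $S(X)\cong\mathrm{Jac}(C)$ for a genus $2$ curve $C$; since $G$ preserves the principal polarization, Lemma \ref{lemma_jac} shows $G$ is generated by $2$ elements. For $g=12$, part (iv) gives $S(X)\cong\p^2$, so $G\subset\Aut(\p^2)\cong\PGL_3(\C)$ and Proposition \ref{gl4} bounds it by $3$ generators. In all four cases the bound is at most $4$, which is the assertion.

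The only point demanding a little care is checking that the auxiliary hypotheses of the cited surface lemmas are genuinely met. In the Jacobian case one must verify that the $\Aut(X)$-action on $S(X)=\mathrm{Jac}(C)$ preserves the polarization, which is automatic because the identification in Proposition \ref{Conic_on_Fano}(iii) is canonical and hence equivariant, so the induced automorphisms are automorphisms of the polarized abelian variety and Lemma \ref{lemma_jac} applies verbatim. In the genus $7$ case one relies on the final clause of Proposition \ref{Conic_on_Fano} to transport the action all the way down to $C$, so that the curve bound \eqref{eq_estimation_on_curve} is available directly rather than via $\mathrm{Sym}^2 C$. I do not expect a serious obstacle here: the real content of the lemma is already packaged inside Proposition \ref{Conic_on_Fano}, and the proof is essentially a dictionary lookup matching each genus to the appropriate surface lemma proved earlier.
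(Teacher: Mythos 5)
Your proposal is correct and takes essentially the same route as the paper's own proof: the same reduction to a faithful action on $S(X)$ via Proposition \ref{Conic_on_Fano}, followed by the identical four case-by-case citations (the curve bound \eqref{eq_estimation_on_curve} for $g=7$, Lemma \ref{lemma_proj_over_curve} for $g=9$, Lemma \ref{lemma_jac} for $g=10$, and the linear-algebra bound for $S(X)\cong\p^2$ when $g=12$). The only cosmetic discrepancy is your count of $3$ generators in the $g=12$ case, where Lemma \ref{lemma_gl3} in fact gives $2$ for a $3$-subgroup of $\PGL_3(\C)$; this is immaterial to the asserted bound of $4$.
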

\begin{proof}
By Proposition \ref{Conic_on_Fano} the group $G$ is a subgroup of $\Aut(S(X))$. 

If $g=12$, the group  $\Aut(S(X))$ is isomorphic to $\PGL_3(\mathbb{C})$. Thus, Proposition \ref{gl4} implies 
that $G$ can be generated by $2$ elements.

If $g=10$, the surface $S(X)$ is a Jacobian of a curve of genus $2$ and the group $G$ preserves its polarization.
Thus, by Lemma \ref{lemma_jac} the group $G$ can be generated by $2$ elements.

If $g=9$, by Lemma \ref{lemma_proj_over_curve} the group $G$ can be generated by $4$ elements.

If $g=7$, by Lemma \ref{Conic_on_Fano} the group $G$ is a subgroup of $\Aut(C)$ and $C$ is a curve of genus $7$.
The bound \eqref{eq_estimation_on_curve} implies
\begin{equation*}
 |G|\leqslant 16\cdot (g(C)-1) = 96<3^5.
\end{equation*}
Thus, the group $G$ can be generated by $4$ elements.
\end{proof}
The next assertion describes 3-groups with faithful action on Fano threefolds of genus 8.
\begin{lemma}\label{lemma_Fano_r1_g8}
Assume that $X$ is a smooth Fano threefold of index $1$ and genus $8$.
Then a $3$-subgroup~$G$ in $\Aut(X)$ can be generated by $4$ elements.
\end{lemma}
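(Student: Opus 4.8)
The plan is to reduce the problem to a cubic threefold. A smooth Fano threefold $X$ of index $1$, Picard number $1$ and genus $8$ is the variety of anticanonical degree $2g-2=14$; by Mukai's theory it is a transverse linear section $\Gr(2,V)\cap\p^9$ of the Grassmannian of a $6$-dimensional space $V$, the section being cut out by a $5$-dimensional space $W\subset\wedge^2 V^{\vee}$ of skew forms. The Pfaffian of these forms is a cubic on $\p(W)\cong\p^4$, and its zero locus is a smooth cubic threefold $Y$ canonically attached to $X$. The first step is to invoke \cite{Hilb_Fano}, where the automorphism groups of smooth Fano threefolds are determined, to obtain that this correspondence is equivariant and yields an isomorphism $\Aut(X)\cong\Aut(Y)$ (an embedding $\Aut(X)\hookrightarrow\Aut(Y)$ would already be enough). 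In particular $G$ is isomorphic to a $3$-subgroup of $\Aut(Y)$.

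Next I would linearise the action. For a smooth cubic threefold $Y\subset\p^4$ one has $-K_Y\sim 2H$ with $H$ the hyperplane class, and $\Pic(Y)\cong\mathbb{Z}\cdot H$ by the Lefschetz hyperplane theorem; in particular $\Pic(Y)$ is torsion free (cf. \cite[Proposition 2.1.2]{FANO}). Hence every automorphism of $Y$ fixes the class $H$ and therefore extends to a projective automorphism of $\p^4$, so that $\Aut(Y)\subset\PGL_5(\mathbb{C})$ and $G$ is a $3$-subgroup of $\PGL_5(\mathbb{C})$. By Proposition \ref{gl4} a $3$-subgroup of $\PGL_5(\mathbb{C})$ is generated by at most $4$ elements, which is exactly the asserted bound.

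I expect the only delicate point to be the first step: identifying the cubic threefold $Y$ attached to $X$ and, above all, checking that the correspondence between $X$ and $Y$ is $\Aut$-equivariant, so that $\Aut(X)$ and $\Aut(Y)$ are identified (or at least that the former embeds into the latter). This is not elementary and rests on the structural results of \cite{Hilb_Fano} rather than on the linear-algebraic lemmas of the appendix; once it is granted, the passage to $\PGL_5(\mathbb{C})$ and the generation bound are immediate.
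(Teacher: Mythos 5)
Your proposal is correct and follows essentially the same route as the paper: the paper likewise reduces via the construction of \cite[Section B.6]{Hilb_Fano} to the associated degree-$3$ del Pezzo (Pfaffian cubic) threefold $Y$ and then concludes through Corollary \ref{corollary_3gr_on_dP}, whose degree-$3$ case is precisely your step $G\subset\PGL_5(\mathbb{C})$ plus Proposition \ref{gl4}. The one point where the paper is more careful is exactly the one you flagged: rather than asserting $\Aut(X)\cong\Aut(Y)$ (which need not hold, since a given cubic corresponds to a positive-dimensional family of genus-$8$ threefolds, so only an embedding is available), it proves faithfulness of the induced $G$-action on $Y$ using the isomorphism of Hilbert schemes $S(Y)\cong\Sigma(X)$ from \cite[Proposition B.6.3]{Hilb_Fano} together with the faithfulness of the $\Aut(X)$-action on $\Sigma(X)$ from Proposition \ref{Conic_on_Fano}.
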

\begin{proof}  
As was shown in \cite[Section B.6]{Hilb_Fano} there exists a canonical construction of a smooth del Pezzo threefold $Y$ of degree $3$
by a a smooth Fano threefold $X$ of index $1$ and genus $8$. The action of~$G$ on $X$ induces an action of $G$ on $Y$; this action 
can be not faithful. By \cite[Proposition B.6.3]{Hilb_Fano} the Hilbert scheme $S(Y)$ of lines on $Y$ is isomorphic to the Hilbert scheme  
of conics~$\Sigma(X)$ on $X$. The action of $G$ on $\Sigma(X)$ is faithful by Proposition \ref{Conic_on_Fano}. Thus, the action of $G$
on $Y$ is also faithful. Thus, by Corollary \ref{corollary_3gr_on_dP} the group $G$ can be generated by $4$ elements.
\end{proof}  

\subsection{Proof of Theorem \ref{main_theorem_dim3}}
 
Remark \ref{rmk_proof_main_thm} proves the first assertion of Theorem  \ref{main_theorem_dim3}. 
To prove the second assertion we consider several cases.

If $X_0$ is a $G$-Mori fiber space, then by Lemma \ref{lemma_mb} the group $G$ can be generated by $4$ elements.
From now on we assume that $X_0$ is a terminal $G$-Fano threefold.

If on $X_0$ there is a non-Gorenstein terminal singularity and it is not the variety described in point~$(a)$,
then by Corollary \ref{corollary_reid_imply} the group $G$ can be generated by $4$ elements. From now on we assume
that $X_0$ has only Gorenstein singularities.

If $X_0$ is a Fano threefold and $\rho(X)>1$, then by Lemma \ref{Fano_big_rho}  the group $G$ can be generated by $4$ elements.

If $X_0$ is a Fano threefold, $\rho(X) = 1$ and index of $X$ is greater than 2, then $X_0$ is a quadric hyper surface or a projective space.
By Proposition \ref{gl4} and \ref{var_of_min_degree} the group $G$ can be generated by $3$ elements.

If $X_0$ is a del Pezzo variety, then by Corollary \ref{corollary_3gr_on_dP} the group $G$ can be generated by $4$ elements.

If $X_0$ is a Fano threefold of index $1$ and genus less than $7$,  then  by Lemmas ~\mbox{\ref{lemma_3gr_g_less_5}, \ref{lemma_3gr_g_equals_5}}
 and \ref{lemma_3gr_g_equals_6} the group $G$ can be generated by $4$ elements.
 
If $X_0$ is a Gorenstein singular Fano threefold of index $1$ and genera 8, 9 or 12, then by Lemma~\ref{lemma_r1_g8912}
the group $G$ can be generated by $4$ elements.
 
If $X_0$ is smooth Fano threefold of index $1$ and genus greater than 6, then by Lemmas \ref{lemma_Fano_r1_g7910}
and \ref{lemma_Fano_r1_g8} the group $G$ can be generated by $4$ elements.
 
If $X_0$ is a singular Gorenstien Fano threefold of index $1$ and genera 7 and 10 and the number of singularities 
on $X_0$ differs from 9 or 18, then by Corollary \ref{corollary_namikawa_imply} the group $G$ can be generated by $4$ elements.
This finishes the proof of  Theorem \ref{main_theorem_dim3}.

\begin{remark}\label{rmk_final}
Theorem \ref{main_theorem_dim3} implies that if there exists a $3$-subgroup $G$ of a group $\Bir(X)$ of birational automorphisms of
a rationally connected threefold $X$ and $G$ can not be generated by less than $5$ elements, then the group $G$ acts faithfully on threefolds
with very specific properties. Indeed, by Proposition \ref{regularization}
there exists a regularization $\widetilde{X}$ of the action of $G$ on $X$. Applying a $G$-equivariant minimal model program to $\widetilde{X}$
we get a terminal $G$-Mori fiber space with regular and faithful action of $G$. Since $G$ can not be generated by less than $5$ elements, $X_0$
is a threefold described in points  $(a)$ or $(b)$ of theorem \ref{main_theorem_dim3}. 

Moreover, this argument gives another proof of the first point of Theorem \ref{main_theorem_dim3} since by Corollaries~\ref{corollary_namikawa_imply}
and~\ref{corollary_reid_imply} any $3$-group with a faithful action on threefolds described in points  $(a)$ or $(b)$ can be generated by $5$ elements.
\end{remark}

\appendix
\section{{3-groups and their representations}}\label{appendix}
\subsection{Generators}
In this section we estimate number of elements generating groups. Consider a situation where we have an estimation 
of the number of generators of a subgroup of a small index of our group. In such situation we can estimate the number
of generators of the whole group using the next assertion.
\begin{lemma}\label{subgroup_of_small_index}
Assume that $G$ is a $p$-group and $H$ is its subgroup of index $p^n$. 
If $H$ can be generated by $m$ elements, then $G$ can be generated by $n+m$ elements.
\end{lemma}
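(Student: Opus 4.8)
The plan is to reduce to the case $n=1$ and then induct on $n$. The two structural facts about finite $p$-groups that I will use are that every maximal subgroup is normal of index $p$, and that every proper subgroup is contained in some maximal subgroup.

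First I would treat the base case $n=1$, where $H$ has index $p$ in $G$ and is therefore a maximal subgroup. Choose generators $h_1,\dots,h_m$ of $H$ together with any element $g\in G\setminus H$. The subgroup $K=\langle h_1,\dots,h_m,g\rangle$ contains $H$ and the element $g\notin H$, so $K$ properly contains $H$; by maximality of $H$ we conclude $K=G$. Hence $G$ is generated by $m+1$ elements, as required.

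For the general case I would induct on $n$. When $n=0$ we have $H=G$ and there is nothing to prove. For $n\geqslant 1$, since $H$ is a proper subgroup it is contained in a maximal subgroup $M$ of $G$, which has index $p$; thus $[M:H]=p^{n-1}$. Applying the induction hypothesis to the inclusion $H\subseteq M$ shows that $M$ is generated by $m+(n-1)$ elements, and applying the base case to $M\subseteq G$ adds a single generator, so $G$ is generated by $(m+n-1)+1=m+n$ elements. This completes the induction.

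The argument is essentially elementary, and the only point requiring care is the justification that each maximal subgroup occurring in the chain has index exactly $p$. This rests on the fact that a maximal subgroup $M$ of a $p$-group is normal --- because the normalizer of a proper subgroup of a $p$-group is strictly larger, forcing $N_G(M)=G$ --- so that $G/M$ is a nontrivial abelian $p$-group with no proper nontrivial subgroups, hence cyclic of order $p$. Alternatively one could phrase the whole proof through the Frattini quotient $G/\Phi(G)$ and the Burnside basis theorem already invoked in the proof of Proposition~\ref{prop_el_to_all}, but the inductive formulation above keeps the bookkeeping on the number of generators most transparent.
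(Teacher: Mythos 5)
Your proof is correct, and it is a mild but genuine variant of the paper's argument. The paper's proof avoids maximal subgroups entirely: it adjoins to the $m$ generators of $H$ a single element $g\in G\setminus H$, observes that the subgroup $H'$ so generated satisfies $[G:H']<[G:H]$ and hence $[G:H']\leqslant p^{n-1}$ (Lagrange, all indices being powers of $p$), and concludes by induction on $n$. Note that in that bottom-up version the index is allowed to drop by more than a factor of $p$ in a single step; this is harmless because the induction only needs the number of steps to be at most $n$. You instead build the chain from the top: you place $H$ inside a maximal subgroup $M$ and invoke the structure theory of finite $p$-groups (normalizers grow, so maximal subgroups are normal of index exactly $p$) to get $[M:H]=p^{n-1}$, then induct, with the base case $n=1$ handled by maximality of $H$ --- which, as is worth observing, holds for a subgroup of prime index in \emph{any} group, so the $p$-group structure theory is really only consumed in the inductive step. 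What your route buys is a chain in which every index is exactly $p$; what it costs is the normality argument, which the paper's leaner formulation shows to be unnecessary --- the strict decrease of the index already suffices. (A small cosmetic point: in your justification that $G/M$ has order $p$ you call it abelian before this is established; better to say that any nonidentity element generates it, so it is cyclic of prime order.) Your closing remark that one could alternatively argue through the Frattini quotient and the Burnside basis theorem, as in the proof of Proposition~\ref{prop_el_to_all}, is also valid, but the inductive version is indeed the more transparent bookkeeping here, and it is the one the paper uses.
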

\begin{proof}
Choose an element $g\in G$ outside $H$ and consider a subgroup $H'\subset G$ generated by $m$ generators of $H$ and $g$.
We have
\begin{equation*}
 [G:H']<[G:H].
\end{equation*}    
Thus, $H'$ is a subgroup of index $p^{n-1}$ or less. Then by induction we show that $G$ can be generated by $n+m$ elements.
\end{proof}
Now consider the case when the group $G$ is an extension of two groups and we can estimate numbers of generators of them.
\begin{lemma}\label{generators_of_group_extension}
Assume that a finite group $G$ is an extension of a group $G_2$ by a group $G_1$ 
and each group~$G_i$ can be generated by $n_i$ elements for $i=1$ and $2$. 
Then the group $G$ can be generated by~\mbox{$n_1+n_2$} elements.
\end{lemma}
\begin{proof}
Choose sets of generators $x_1,\dots,x_{n_1}$ and $y_1,\dots,y_{n_2}$ of groups $G_1$ and $G_2$.
Denote by $\widetilde{x_i}$ the image of $x_i$ in $G$ and choose a preimage $\widetilde{y_j}$ of $y_j$ in $G$.

Take an element $z$ in $G$. Then the image of $z$ in $G_2$ equals to a product of generators $\prod y_i^{d_i}$.
Then the product $z\cdot(\prod \widetilde{y_i}^{d_i})^{-1}$ maps to unity in $G_2$. Thus, this product equals 
to~$\prod \widetilde{x_j}^{c_j}$. Therefore, $z$ can be represented as a product of $x_j$ and $y_i$ and these elements generate
the group $G$.
\end{proof}
If $H$ is a subgroup of $G$ and we know the number of generators of $G$ then we can conclude almost no information about the
number of generators of $H$. For example, any finite group can be embedded to a group of permutation $\SG_N$; though, $\SG_N$
can be generated by a transposition and a cycle of length~$N$. Nevertheless, if $H$ is a subgroup of a direct product we can
extract some information.
\begin{lemma}\label{subgroup_of_direct_product}
Assume that $H\subset \prod\limits_{i=1}^N G_i$ and for each $i$ any subgroup of $G_i$ can be generated 
by $n_i$ elements. Then $H$ can be generated by $\sum\limits_{i=1}^N n_i$ elements.
\end{lemma}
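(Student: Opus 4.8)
The plan is to prove Lemma~\ref{subgroup_of_direct_product} by induction on $N$, using the projections of $H$ onto the factors together with the previously established extension result in Lemma~\ref{generators_of_group_extension}. First I would set up the base case $N=1$: here $H\subset G_1$ is itself a subgroup of $G_1$, and by hypothesis every subgroup of $G_1$ can be generated by $n_1$ elements, so $H$ can be generated by $n_1$ elements. This is exactly the estimate we want.

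\medskip

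For the inductive step, suppose the claim holds for products of fewer than $N$ factors. Let $\pi\colon \prod_{i=1}^N G_i \to G_N$ be the projection onto the last factor, and restrict it to $H$. The image $\pi(H)$ is a subgroup of $G_N$, hence can be generated by $n_N$ elements by hypothesis. The kernel $K=\ker(\pi|_H)$ consists of those elements of $H$ whose last coordinate is trivial, so $K$ embeds into $\prod_{i=1}^{N-1} G_i$; it is therefore a subgroup of a product of $N-1$ factors, and by the induction hypothesis it can be generated by $\sum_{i=1}^{N-1} n_i$ elements. Now $H$ sits in a short exact sequence
\begin{equation*}
 1 \to K \to H \to \pi(H) \to 1,
\end{equation*}
so $H$ is an extension of $\pi(H)$ by $K$. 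Applying Lemma~\ref{generators_of_group_extension} with $G_1=K$ and $G_2=\pi(H)$, the group $H$ can be generated by $\left(\sum_{i=1}^{N-1} n_i\right)+n_N = \sum_{i=1}^N n_i$ elements, which completes the induction.

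\medskip

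The one point that genuinely requires the hypothesis in its full strength --- and which I expect to be the only subtle step --- is the treatment of the kernel $K$. It is not enough to know that $H$ is generated by few elements; we need that the \emph{subgroup} $K$ of the smaller product is generated by few elements, and this is precisely why the hypothesis is phrased so that \emph{any} subgroup of each $G_i$ (not just $G_i$ itself) admits the bound $n_i$. Thus $K$, viewed inside $\prod_{i=1}^{N-1} G_i$, satisfies the hypotheses of the lemma for the shorter product, and the induction applies to $K$ rather than to the whole product. Everything else is a routine application of the extension lemma.
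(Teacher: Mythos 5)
Your proposal is correct and matches the paper's own argument step for step: induction on $N$, projection onto the last factor, the kernel viewed as a subgroup of $\prod_{i=1}^{N-1} G_i$, and Lemma~\ref{generators_of_group_extension} applied to the resulting extension. Your closing remark about why the hypothesis must cover \emph{all} subgroups of each $G_i$ is a valid observation that the paper leaves implicit.
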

\begin{proof}
We prove this by induction by $N$. If $N=1$, then the assertion is trivial. Assume that the assertion is true for $N-1$ and show it for $N$.
Denote by~$\mathrm{pr}_N$ the projection form the product to $G_N$: 
\begin{equation*}
 \mathrm{pr}_N: \prod\limits_{i=1}^N G_i\to G_N.
\end{equation*}
The groups $\mathrm{pr}_N(H)\subset G_N$ and $\mathrm{Ker}(\mathrm{pr}_N)\cap H \prod\limits_{i=1}^{N-1}G_i$
can be generated by $n_N$ and $\sum\limits_{i=1}^{N-1}n_i$ respectively. The group $H$ is the following extension:
\begin{equation*}
 1\to \mathrm{Ker}(\mathrm{pr}_N)\cap H\to H \to \mathrm{pr}_N(H) \to 1
\end{equation*}
By Lemma \ref{generators_of_group_extension} we conclude that $H$ can be generated by $\sum\limits_{i=1}^N n_i$ elements.
\end{proof}

\subsection{Heisenberg group}
Denote by $\H_3$ a group of $27$ elements with three generators~$x$,~$y$ and~$z$ and relations
\begin{align*}
 &x^3 = y^3 = z^3 = 1;\\
 &xyx^{-1}y^{-1} = z.
\end{align*}
This group is called \emph{Heisenberg group}, it can be generated by two elements $x$ and $y$ and its center
is generated by $z$. Thus, we have the following exact sequence.
\begin{equation*}
 1 \to \mathrm{C}_3\langle z \rangle \to \H_3 \to \mathrm{C}_3\langle x, y\rangle \to 1.
\end{equation*}
Here the group $\mathrm{C}_3\langle x, y\rangle$ is isomorphic to a direct product of cyclic groups $\mathrm{C}_3\times \mathrm{C}_3$.
The group $\H_3$ can be described as a subgroup of $\SL_3(\mathbb{C})$ generated by the following matrices
\begin{equation*}
 X = \left(\begin{smallmatrix}
       0&0&1\\ 1&0&0\\ 0&1&0
     \end{smallmatrix}\right)
\text{ and }
Y = \left(\begin{smallmatrix}
     1&0&0\\
     0& \zeta & 0\\
     0&0& \zeta^2
    \end{smallmatrix}\right).
\end{equation*}
Here $\zeta$ is a primitive root of unity. In this realization the center of the group $\H_3$ is inside the group of scalar 
matrices. Denote by $N(\H_3)$ the normalizer of a Heisenberg group in $\SL_3(\mathbb{C})$. 
\begin{lemma}[{\cite[\S 8.5]{Normalizer_of_H3}}]
 The quotient group $N(\H_3)/\H_3$ is isomorphic to $\SL_2(\mathbb{F}_3)$.
\end{lemma}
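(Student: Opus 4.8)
The plan is to analyze the conjugation action of the normalizer on $\H_3$ and to read off the quotient from the structure of $\Aut(\H_3)$. First I would form the homomorphism $\phi\colon N(\H_3)\to\Aut(\H_3)$ sending $g$ to the automorphism $h\mapsto ghg^{-1}$. Since the defining $3$-dimensional representation $\rho$ of $\H_3$ is irreducible, Schur's lemma shows that the centralizer of $\H_3$ in $\GL_3(\C)$ consists only of scalar matrices; intersecting with $\SL_3(\C)$ leaves exactly the scalar cube roots of unity, which are precisely the central elements $Z(\H_3)$. Hence $\ker\phi=Z(\H_3)$ and $N(\H_3)/Z(\H_3)\cong\operatorname{im}\phi$.

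Next I would pin down $\operatorname{im}\phi$ as the subgroup $\Aut_Z(\H_3)\subset\Aut(\H_3)$ of automorphisms acting trivially on the center $Z(\H_3)\cong\mathrm{C}_3$. The group $\H_3$ has exactly two faithful irreducible $3$-dimensional representations, distinguished by their central characters $\zeta$ and $\zeta^2$. If $\alpha\in\Aut(\H_3)$ is trivial on $Z(\H_3)$, then $\rho\circ\alpha$ has the same central character as $\rho$, so $\rho\circ\alpha\cong\rho$ and $\alpha$ is realized by conjugation by some $g\in\GL_3(\C)$; rescaling $g$ by a suitable cube root makes $\det g=1$, so $\alpha=\phi(g)$ with $g\in N(\H_3)$. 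Conversely, an automorphism inverting the center would send $\rho$ to the inequivalent complex conjugate representation and therefore cannot arise from conjugation by any matrix. This identifies $\operatorname{im}\phi=\Aut_Z(\H_3)$.

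Since conjugation by elements of $\H_3$ realizes exactly the inner automorphisms, $\phi(\H_3)=\mathrm{Inn}(\H_3)$, and as $\ker\phi=Z(\H_3)\subset\H_3$ we obtain $N(\H_3)/\H_3\cong\Aut_Z(\H_3)/\mathrm{Inn}(\H_3)$. To finish I would use the commutator pairing $\H_3/Z(\H_3)\times\H_3/Z(\H_3)\to Z(\H_3)\cong\mathbb{F}_3$, which is a nondegenerate alternating form on the $2$-dimensional $\mathbb{F}_3$-space $\H_3/Z(\H_3)\cong\mathbb{F}_3^2$. An automorphism trivial on the center preserves this form, giving a homomorphism $\Aut_Z(\H_3)\to\mathrm{Sp}_2(\mathbb{F}_3)=\SL_2(\mathbb{F}_3)$ whose kernel is precisely $\mathrm{Inn}(\H_3)$, namely the automorphisms trivial on both $Z(\H_3)$ and $\H_3/Z(\H_3)$.

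The one genuinely nontrivial step is surjectivity of this last map, i.e. that every symplectic transformation of $\mathbb{F}_3^2$ lifts to an automorphism of $\H_3$; this I would settle by writing down explicit lifts on the generators $x,y,z$. A quick order count ($|\Aut_Z(\H_3)|=216$ and $|\mathrm{Inn}(\H_3)|=9$) then confirms that the index matches $|\SL_2(\mathbb{F}_3)|=24$ and yields $N(\H_3)/\H_3\cong\SL_2(\mathbb{F}_3)$, as claimed.
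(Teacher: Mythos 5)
Your proof is correct, but it cannot coincide with ``the paper's proof'' for a simple reason: the paper proves nothing here, stating the lemma only with the citation \cite{Normalizer_of_H3}. Your self-contained argument is the standard one for extraspecial $p$-groups (the Weil-representation picture): Schur's lemma plus irreducibility of the defining representation identifies $\ker\phi$ with the scalar matrices in $\SL_3(\C)$, which are exactly $Z(\H_3)$; comparing central characters of the two faithful irreducible $3$-dimensional representations pins down $\operatorname{im}\phi=\Aut_Z(\H_3)$ (your rescaling by a cube root to force $\det g=1$ is exactly the right device for landing in $\SL_3$ rather than $\GL_3$); and the nondegenerate alternating commutator pairing on $\H_3/Z(\H_3)\cong\mathbb{F}_3^2$ converts the quotient into $\mathrm{Sp}_2(\mathbb{F}_3)=\SL_2(\mathbb{F}_3)$. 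Where the citation buys the paper brevity, your argument buys transparency: it exhibits concretely the action of $N(\H_3)/\H_3$ on $\H_3/Z(\H_3)$ that the paper exploits immediately afterwards in Lemma \ref{lemma_image_of_H}.

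Two steps that you assert or defer should be written out to make the proof airtight. First, the identification $\ker\bigl(\Aut_Z(\H_3)\to\mathrm{Sp}_2(\mathbb{F}_3)\bigr)=\mathrm{Inn}(\H_3)$: an automorphism trivial on $Z(\H_3)$ and on $V=\H_3/Z(\H_3)$ has the form $h\mapsto h\,\chi(\bar h)$ with $\chi\in\mathrm{Hom}(V,Z(\H_3))$ (any homomorphism $\H_3\to Z(\H_3)$ automatically kills $Z(\H_3)=[\H_3,\H_3]$), a group of order $9$; nondegeneracy of the pairing makes $g\mapsto[g,\cdot\,]$ an isomorphism $V\to\mathrm{Hom}(V,Z(\H_3))$, so every such automorphism is inner. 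This direct argument is preferable to your closing order count, since quoting $|\Aut_Z(\H_3)|=216$ before establishing surjectivity is mildly circular unless you compute $\Aut(\H_3)$ independently. Second, surjectivity: because $\H_3$ has exponent $3$, one can present $\H_3\cong V\times Z(\H_3)$ with product $(u,s)(v,t)=(u+v,\,s+t+2\,\omega(u,v))$ (here $2=\tfrac{1}{2}$ in $\mathbb{F}_3$), and then $(u,s)\mapsto(Au,s)$ is an automorphism trivial on the center for every $A\in\SL_2(\mathbb{F}_3)$; this is the explicit lift you promised on generators, done uniformly. With these two points made precise, your proof is complete.
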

The maximal $3$-subgroup of $\SL_2(\mathbb{F}_3)$ is isomorphic to a cyclic group $\mathrm{C}_3$. 
Denote by~$\widetilde{\H_3}$ the maximal~\mbox{$3$-sub}\-group of~$N(\H_3)$. Then it is the following extension.
\begin{equation*}
   1 \to \H_3 \to \widetilde{\H_3} \to \mathrm{C}_3 \to 1
\end{equation*}
\begin{lemma}\label{lemma_image_of_H}
 The image of the group $\widetilde{\H}_3\subset \SL_3(\mathbb{C})$ under the projection to the group $\PGL_3(\mathbb{C})$ 
 is isomorphic to $\H_3$.
\end{lemma}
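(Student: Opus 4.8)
The plan is to study the restriction of the projection $\pi\colon\SL_3(\C)\to\PGL_3(\C)$ to $\widetilde{\H}_3$ and to read off the isomorphism type of the image from explicit generators and relations. First I would identify the kernel. Since $\ker\pi$ consists of the scalar matrices $\{cI:c^3=1\}\cong\mathrm{C}_3$, and a direct computation gives $XY=\zeta^2\,YX$, i.e. $[X,Y]=XYX^{-1}Y^{-1}=\zeta^2 I$, the central generator $z$ of $\H_3$ maps precisely onto these scalars. In particular the scalar subgroup already lies in $\H_3\subset\widetilde{\H}_3$, so $\ker\left(\pi|_{\widetilde{\H}_3}\right)\cong\mathrm{C}_3$ and the image $\pi(\widetilde{\H}_3)$ has order $81/3=27$; moreover $\pi(\H_3)\cong\H_3/\langle z\rangle\cong\mathrm{C}_3\times\mathrm{C}_3$ sits inside $\pi(\widetilde{\H}_3)$ as an index-$3$ subgroup.

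Next I would exhibit the extra generator explicitly. All maximal $3$-subgroups of $N(\H_3)$ are Sylow subgroups, hence conjugate inside $N(\H_3)\subset\SL_3(\C)$, so their images in $\PGL_3(\C)$ are conjugate and it suffices to treat one convenient choice. I would take $w=\mathrm{diag}(\xi,\xi,\xi\zeta)$, where $\xi$ is a primitive $9$-th root of unity normalised so that $\det w=\xi^3\zeta=1$. A direct check gives $wXw^{-1}=XY$ and $wYw^{-1}=Y$, so $w$ normalises $\H_3$ and induces an order-$3$ element of $N(\H_3)/\H_3\cong\SL_2(\mathbb{F}_3)$; since $w\notin\H_3$ we obtain $\widetilde{\H}_3=\langle X,Y,w\rangle$, which already has the correct order $81$.

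Finally I would identify the image. Writing $\bar X,\bar Y,\bar w$ for the images in $\PGL_3(\C)$, the relations above descend to $\bar X^3=\bar Y^3=1$, $[\bar X,\bar Y]=1$, $[\bar w,\bar X]=\bar Y$ and $[\bar w,\bar Y]=1$. The decisive point is that $w^3=\xi^3 I=\zeta^2 I$ is a scalar matrix, so $\bar w^3=1$ even though $w$ itself has order $9$ in $\SL_3(\C)$. Hence $\bar X$ and $\bar w$ are elements of order $3$ whose commutator $\bar Y$ is central of order $3$, which is exactly the defining presentation of $\H_3$ (with $\bar w,\bar X,\bar Y$ playing the roles of $x,y,z$). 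Since $\left|\pi(\widetilde{\H}_3)\right|=27=|\H_3|$, the resulting surjection $\H_3\twoheadrightarrow\pi(\widetilde{\H}_3)$ is an isomorphism.

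The hard part will be this last step: a priori $\pi(\widetilde{\H}_3)$ is merely a non-abelian group of order $27$, and one must exclude the other such group, the one of exponent $9$. This amounts to checking that the image has exponent $3$, which is not obvious precisely because the determinant-one condition forces the natural lift $w$ to be a primitive $9$-th root of unity times a diagonal matrix, so $w$ has order $9$ in $\SL_3(\C)$; the whole content of the argument is that $w^3$ nonetheless lands in the scalar subgroup and therefore becomes trivial in $\PGL_3(\C)$.
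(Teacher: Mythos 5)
Your proof is correct, and it reaches the conclusion by a genuinely different, more explicit route than the paper. The paper argues structurally: the scalar center of $\H_3$ dies under $\pi$, so $W=\pi(\H_3)$ becomes a two-dimensional $\mathbb{F}_3$-vector space, the extra generator acts on $W$ through the generator $m$ of the Sylow $3$-subgroup of $N(\H_3)/\H_3\cong\SL_2(\mathbb{F}_3)$, i.e.\ by the unipotent matrix $M=\left(\begin{smallmatrix}1&1\\0&1\end{smallmatrix}\right)$, and from this it concludes that $\pi(\widetilde{\H}_3)$ is a non-abelian group of order $27$ all of whose elements have order $3$, hence Heisenberg. You instead produce a concrete lift $w=\mathrm{diag}(\xi,\xi,\xi\zeta)$ (after a Sylow-conjugacy reduction, which is genuinely needed and which you correctly supply, since $\widetilde{\H}_3$ is only well defined up to conjugacy in $N(\H_3)$), verify $wXw^{-1}=XY$, $wYw^{-1}=Y$, and identify the image via the presentation of $\H_3$ plus the order count $81/3=27$; all of your computations check out ($[X,Y]=\zeta^2 I$, $\det w=\xi^3\zeta=1$, $w^3=\zeta^2 I$, and $w\notin\H_3$ since its entries are primitive ninth roots of unity). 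Notably, your explicit approach supplies exactly the step the paper leaves implicit: knowing that the quotient acts unipotently on $W$ does not by itself pin down the group, because the other non-abelian group of order $27$, the exponent-$9$ group $\langle a,b\mid a^9=b^3=1,\ bab^{-1}=a^4\rangle$, is likewise an extension of $\mathrm{C}_3$ by $\langle a^3,b\rangle\cong\mathrm{C}_3\times\mathrm{C}_3$ with nontrivial unipotent action of the quotient. What excludes it here is precisely your observation that $w^3=\xi^3 I$ is scalar, so $\bar{w}^3=1$ in $\PGL_3(\mathbb{C})$ even though $w$ has order $9$ in $\SL_3(\mathbb{C})$; the paper's one-sentence assertion that all elements of $\pi(\widetilde{\H}_3)$ have order $3$ is true but needs exactly this kind of verification. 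What the paper's route buys in exchange is coordinate-freeness: it never chooses a particular Sylow representative or a basis, whereas your argument trades that economy for complete verifiability by hand.
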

\begin{proof}
 Denote the standard projection $\SL_3(\mathbb{C})\to \PGL_3(\mathbb{C})$ by $\pi$.
 In the center of the group $\H_3$ in~$\SL_3(\mathbb{C})$ all matrices are scalar; and they map to unity in $\PGL_3(\mathbb{C})$.
 Thus, the group $W=\pi(\H_3)$ can be considered as a vector space of dimension $2$ over $\mathbb{F}_3$. 
 Denote by $m$ the generator of the Sylow~\mbox{$3$-sub}\-group of $\SL_2(\mathbb{F}_3)$. 
 In some coordinates $w_1$ and $w_2$ of $W$ the generator $m$ acts as the following matrix:
 \begin{equation*}
  M = \left(\begin{smallmatrix}
            1&1 \\0& 1
           \end{smallmatrix}
       \right).
 \end{equation*}
Thus,  the group $\pi(\widetilde{\H_3})$ is a non-abelian group of order $27$ and all its elements are of order $3$. Therefore, it is a Heisenberg group.
\end{proof}

\subsection{Consequents of linear algebra}
 
We start with a description of $p$-subgroups of $\GL_2(\mathbb{C})$. 
\begin{lemma}\label{gl2}
If $p\ne 2$ is a prime number, then any $p$-subgroup $G$ in $\GL_2(\mathbb{C})$ is abelian and can be generated by $2$ elements.
 \begin{equation*}
  G\cong \mathrm{C}_{p^n} \times \mathrm{C}_{p^m}.
 \end{equation*}
Moreover, any $p$-subgroup of $\PGL_2(\mathbb{C})$ is cyclic and its action on a projective line preserves a point.
\end{lemma}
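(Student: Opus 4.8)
The plan is to treat the two claims separately. For the linear statement I would first observe that the standard $2$-dimensional representation of $G$ on $\mathbb{C}^2$ cannot be irreducible: the dimension of any irreducible complex representation of a finite $p$-group is a power of $p$, and since $p$ is odd, $2$ is not such a power. Hence this representation is reducible, and by Maschke's theorem it is completely reducible, so $\mathbb{C}^2 = L_1\oplus L_2$ with both $L_i$ one-dimensional and $G$-invariant. Therefore $G$ is simultaneously diagonalizable and in particular abelian. Recording the two diagonal entries gives characters $\chi_1,\chi_2\colon G\to\mathbb{C}^*$, and the pair $(\chi_1,\chi_2)$ is injective because the action is faithful. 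Each image $\chi_i(G)$ is a finite subgroup of $\mathbb{C}^*$, hence a cyclic $p$-group, so $G$ embeds into a product of two finite cyclic $p$-groups. By the structure theorem for finite abelian $p$-groups this forces $G\cong \mathrm{C}_{p^n}\times\mathrm{C}_{p^m}$ for some $n,m$, so in particular $G$ can be generated by $2$ elements.

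For the projective statement the idea is to lift a $p$-subgroup $\bar G\subset\PGL_2(\mathbb{C})$ to $\SL_2(\mathbb{C})$. Using the central isogeny $\SL_2(\mathbb{C})\to\PGL_2(\mathbb{C})$ with kernel $\{\pm I\}\cong\mathrm{C}_2$, the full preimage $\hat G$ of $\bar G$ is a finite group of order $2\lvert\bar G\rvert=2p^k$. I would then take $G$ to be a Sylow $p$-subgroup of $\hat G$; since $p$ is odd, $G\cap\{\pm I\}=1$, so the restriction to $G$ of the projection is injective, and a count of orders shows it is an isomorphism. Thus $\bar G$ is realized as an honest $p$-subgroup $G\subset\SL_2(\mathbb{C})$.

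Now the first part applies to $G$: it is abelian and simultaneously diagonalizable. Because $G$ lies in $\SL_2(\mathbb{C})$, its diagonal form lands in the one-parameter torus $\{\mathrm{diag}(t,t^{-1})\}\cong\mathbb{C}^*$, so $G$ is a finite subgroup of $\mathbb{C}^*$, hence cyclic; therefore $\bar G\cong G$ is cyclic. Finally, the common diagonalization exhibits the two coordinate points $[1:0]$ and $[0:1]$ of $\p^1$ as fixed by $G$, and hence by $\bar G$, which gives the desired invariant point.

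I expect the main subtlety to be the lifting step: one must guarantee that the chosen lift is genuinely a $p$-group isomorphic to $\bar G$ rather than a larger central extension. The point that makes this work is precisely that $p$ is odd, so the order-$2$ kernel $\{\pm I\}$ is coprime to $p$ and is split off by Sylow's theorem; this is exactly where the hypothesis $p\ne 2$ enters, and it is also the reason the conclusions (cyclicity and the existence of a fixed point) fail for $p=2$.
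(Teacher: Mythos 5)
Your proof is correct and takes essentially the same approach as the paper: both hinge on the fact that every irreducible complex representation of a finite $p$-group has $p$-power dimension, so for odd $p$ the faithful $2$-dimensional representation splits into two characters and $G$ is simultaneously diagonalizable, hence abelian of rank at most $2$. You simply spell out the details the paper compresses into ``Therefore, we get the result,'' namely the Sylow lifting of a $p$-subgroup of $\PGL_2(\mathbb{C})$ through $\SL_2(\mathbb{C})$ (where oddness of $p$ splits off the kernel $\{\pm I\}$) and the observation that the diagonal torus of $\SL_2(\mathbb{C})$ is one-dimensional, which yields cyclicity and the fixed points on $\p^1$.
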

\begin{proof}
Any embedding of a finite group $G$ to $\GL_2(\mathbb{C})$ induces a faithful representation of dimension $2$ of the group $G$.
The order of the group $G$ divides the dimension of any irreducible representation of~$G$. Thus, if $p\ne 2$, a representation of 
dimension 2 of $G$ is a sum of linear representations. Therefore, we get the result. 
\end{proof}
Now we consider Lie groups of higher rank.
\begin{lemma}\label{lemma_gl3}
 If $G$ is a $3$-subgroup of $\PGL_3(\mathbb{C})$, then it can be generated by $2$ elements.
 If $G$ is a~\mbox{$3$-sub}\-group of $\GL_3(\mathbb{C})$, then it can be generated by $3$ elements.
\end{lemma}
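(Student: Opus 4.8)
The plan is to split the argument according to whether the given faithful $3$-dimensional representation is reducible or irreducible, and to deduce the $\GL_3$-statement from the $\PGL_3$-statement. I will use throughout that an irreducible complex representation of a $3$-group has dimension a power of $3$; consequently a reducible faithful representation of degree $3$ must split as a direct sum of three characters, so that the group is diagonalizable.

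First I would settle $\PGL_3(\mathbb{C})$. Given a $3$-subgroup $\bar G\subset\PGL_3(\mathbb{C})$, pull it back to its preimage $G\subset\SL_3(\mathbb{C})$ along $\SL_3\to\PGL_3$; the kernel is the scalar group $\mathrm{C}_3$, so $G$ is again a finite $3$-group, now realized in $\GL_3$. If its representation is reducible then $G$ is diagonal and lies in the maximal torus $D$ of $\SL_3$, which is a $2$-dimensional torus; hence $G$, and therefore its quotient $\bar G$, has rank at most $2$. The substantial case is when the representation of $G$ is irreducible. Here I would invoke that every irreducible representation of a $3$-group is monomial, so $\rho\cong\mathrm{Ind}_H^G\chi$ for a subgroup $H$ of index $3$ (automatically normal) and a character $\chi$; in the monomial basis $H=G\cap D$ acts diagonally and a generator $g$ of $G/H\cong\mathrm{C}_3$ acts by the cyclic coordinate permutation $c$.

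Writing $\bar H=H/\mathrm{C}_3\subset\bar G$, which is abelian of rank $\le 2$ as a quotient of a subgroup of the $2$-torus $D$, a direct commutator computation gives $[\bar G,\bar G]=(c-1)\bar H$. This yields $d(\bar G)\le 1+\dim_{\mathbb{F}_3}V_c$, where $V=\bar H\otimes\mathbb{F}_3$ and $V_c=V/(c-1)V$ is the module of $c$-coinvariants. The main obstacle is exactly to show $\dim_{\mathbb{F}_3}V_c\le 1$: since $c$ has order $3$ it acts on the (at most $2$-dimensional) space $V$ as a unipotent operator $1+n$ with $n^3=0$, and the only way to get $\dim V_c=2$ is $n=0$, i.e.\ $c$ trivial on $V$ with $\dim V=2$, whence $\operatorname{rank}(\bar H_c)=2$ and $|\bar H_c|\ge 9$.

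I would rule this bad case out using the standard equality $|\bar H_c|=|\bar H^c|$ for the cyclic group $\langle c\rangle$ acting on the finite abelian group $\bar H$, combined with the observation that the $c$-fixed points in $D$ are precisely the scalars $\mathrm{C}_3$: this makes $\{h\in H:(c-1)h\in\mathrm{C}_3\}$ map into $\mathrm{C}_3$ with kernel $\mathrm{C}_3$, so $|\bar H^c|\le 3<9$, a contradiction. Hence $\dim V_c\le 1$ and $d(\bar G)\le 2$, proving the $\PGL_3$-statement. Finally, for $\GL_3(\mathbb{C})$: in the reducible case $G$ is diagonal of rank $\le 3$, while in the irreducible case Schur's lemma forces the center to be scalar, hence cyclic, and $G/Z(G)\subset\PGL_3$ is $2$-generated by the previous part; Lemma \ref{generators_of_group_extension} then gives $d(G)\le 1+2=3$.
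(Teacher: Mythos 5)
Your proposal is correct, and for the substantial half of the lemma it takes a genuinely different route from the paper. For the $\GL_3(\mathbb{C})$ statement the two arguments essentially coincide: the paper applies Lemma \ref{generators_of_group_extension} to the central extension $1\to\mathbb{C}^*\to\GL_3(\mathbb{C})\to\PGL_3(\mathbb{C})\to 1$ (cyclic kernel, $2$-generated image), which is exactly your final step; your reducible/irreducible dichotomy there is harmless but not needed, since $G/(G\cap\mathbb{C}^*)$ embeds into $\PGL_3(\mathbb{C})$ whether or not the representation is irreducible. For the $\PGL_3(\mathbb{C})$ statement, however, the paper gives no argument at all: it cites Borel (the same structure theory behind Theorem \ref{thm_psgroup_of_Lie}, that a $p$-subgroup of a compact Lie group normalizes a torus), whereas you supply a complete elementary proof. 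Your reducible case (three characters, hence a subgroup of the rank-$2$ torus of $\SL_3$) and your irreducible case (monomiality forces the diagonal-plus-cyclic-permutation configuration, which is precisely the shape recorded in \eqref{eq_matrix_in_3group} and Corollary \ref{corollary_3sgr_gl3_3generators}) are both sound, and the key refinement — the Burnside-basis estimate $d(\bar G)\le 1+\dim_{\mathbb{F}_3}V_c$ combined with the counting identity $|\bar H_c|=|\bar H^c|$ and the observation that the permutation-fixed diagonal matrices in $\SL_3$ are only the scalar $\mathrm{C}_3$ — is exactly what is needed to improve the naive normalizer bound of $2+1=3$ generators down to the sharp bound $2$. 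I checked the delicate points ($(c-1)\bar H\subseteq[\bar G,\bar G]$; right-exactness of $-\otimes\mathbb{F}_3$ giving $\bar H_c\otimes\mathbb{F}_3=V_c$; the kernel computation $H^c=\mathrm{C}_3$, which uses that the scalars lie in $H=G\cap D$) and they all hold. What the paper's citation buys is brevity and generality; what your argument buys is self-containedness — it avoids compact Lie group theory entirely and in effect reproves, by monomial representation theory, the special case of Theorem \ref{thm_psgroup_of_Lie} that this lemma actually requires.
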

\begin{proof}
The first assertion can be proved as in \cite[Section 6.4]{Borel}. 
To estimate the number of generators of a $3$-subgroup $G$ of $\GL_3(\mathbb{C})$ we consider the following
exact sequence.
 \begin{equation*}
  1 \to \mathbb{C}^* \to  \GL_3(\mathbb{C}) \xrightarrow{\pi} \PGL_3(\mathbb{C}) \to 1.
 \end{equation*}
The kernel of the map $\pi|_{G}$ is a subgroup of $\mathbb{C}^*$; thus, it is a cyclic group.
The image~$\pi(G)$ is a~\mbox{$3$-sub}\-group of $\PGL_3(\mathbb{C})$, thus, it can be generated by  $2$ elements.
Lemma \ref{generators_of_group_extension} implies the result.
\end{proof}

The next assertion describes $p$-subgroups of a compact Lie group for all prime numbers $p$.
\begin{theorem}[{\cite[Theorem 1]{BS}, \cite[Section 6.4]{Borel}}]\label{thm_psgroup_of_Lie}
Assume that $\mathscr{G}$ is a compact Lie group and~$G$ is a~\mbox{$p$-sub}\-group in $\mathscr{G}$.
Then there exists a torus $T\subset\mathscr{G}$ such that the group $G$ is a subgroup of a normalizer $N(T)$. 

In particular, if $\mathscr{G}$ is $\SL_p(\mathbb{C})$ or $\PGL_p(\mathbb{C})$, then
the quotient group $G/(G\cap T)$ is contained in~$\mathbb{Z}/p\mathbb{Z}$, generated by matrices of permutations.
\end{theorem}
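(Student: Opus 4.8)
The plan is to split the statement into its two parts: the general existence of a torus whose normalizer contains $G$, and the sharper description for $\SL_p(\mathbb{C})$ and $\PGL_p(\mathbb{C})$. For the first part I would treat the theorem of Borel and Serre as a black box: its proof proceeds through the torsion-prime analysis of $\mathscr{G}$ and the cohomology of the flag manifold $\mathscr{G}/T$, and reproducing it is the genuine obstacle, lying well outside the elementary techniques of this paper. What the applications actually use is the explicit ``in particular'' clause, and this I would prove directly. Note first that $\SL_p(\mathbb{C})$ and $\PGL_p(\mathbb{C})$ are not compact, so I work instead with their maximal compact subgroups $\mathrm{SU}(p)$ and $\mathrm{PSU}(p)$, into which the finite group $G$ conjugates by averaging a Hermitian form over $G$.

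So suppose $G\subset\mathrm{SU}(p)$ acts on $\mathbb{C}^p$, and decompose this representation into irreducibles. The dimension of any irreducible representation of a $p$-group divides the group order and is therefore a power of $p$; since $p^2>p$, either $\mathbb{C}^p$ splits as a direct sum of $p$ lines, or it is itself irreducible. In the first case $G$ is simultaneously diagonalizable, hence contained in the diagonal maximal torus $T$, and $G/(G\cap T)$ is trivial.

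The essential case is the irreducible one, which I would handle by Clifford theory. Let $A$ be a maximal normal abelian subgroup of $G$; since $G$ is a $p$-group, $A$ is self-centralizing, $A=C_G(A)$. Restricting the faithful irreducible $V=\mathbb{C}^p$ to $A$ yields a sum of one-dimensional characters forming a single $G$-orbit, whose length $[G:I]$ (with $I$ the inertia subgroup of a character) divides $\dim V=p$. If $[G:I]=1$ then $A$ acts by scalars, so $A\subset Z(G)$ and hence $A=C_G(A)=G$ is abelian, forcing $\dim V=1$, a contradiction. Therefore $[G:I]=p$: the group $A$ acts through $p$ distinct characters, $V$ is the sum of the corresponding $p$ eigenlines, and $G$ permutes these lines transitively. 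This gives a homomorphism $G\to\SG_p$ with image a transitive $p$-subgroup of $\SG_p$, necessarily cyclic of order $p$ and generated by a $p$-cycle. Taking $T$ to be the torus of matrices diagonal in the eigenline basis, the kernel of $G\to\SG_p$ is exactly $G\cap T$, so $G/(G\cap T)$ embeds into $\mathbb{Z}/p\mathbb{Z}$ and is realized by permutation matrices, as required.

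Finally, the $\PGL_p(\mathbb{C})$ case I would deduce from the $\SL_p(\mathbb{C})$ case by passing to the preimage $\widetilde{G}\subset\mathrm{SU}(p)$ of $G\subset\mathrm{PSU}(p)$: this is again a $p$-group, the central kernel $\mu_p$ lies in the chosen maximal torus, and the isogeny identifies the two Weyl groups with $\SG_p$, so the conclusion descends verbatim.
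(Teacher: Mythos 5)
Your proposal is correct, but it does something the paper never does: the paper offers no proof of this statement at all, citing it to Borel--Serre and Borel's seminar, so your argument should be compared against the standard derivation rather than a proof in the text. For the general clause you black-box exactly what the paper black-boxes, which is the right call. For the ``in particular'' clause, the routine route is a one-liner on top of the cited theorem: $G\subset N(T)$ gives an embedding $G/(G\cap T)\hookrightarrow N(T)/T\cong \SG_p$, and since $p!$ is divisible by $p$ only once, a $p$-subgroup of $\SG_p$ has order at most $p$ and is generated by a $p$-cycle. Your Clifford-theoretic argument instead reproves this clause from scratch, and I verified its key steps: the dimension count forcing $\mathbb{C}^p$ to be either a sum of $p$ lines or irreducible is right (each irreducible summand has $p$-power dimension); a maximal normal abelian $A$ in a $p$-group is indeed self-centralizing; in $\dim V = e\,[G:I] = p$ the case $[G:I]=1$ correctly collapses via faithfulness to $A\subseteq Z(G)$, hence $G=C_G(A)=A$ abelian, a contradiction; the case $[G:I]=p$ gives $p$ eigenlines permuted transitively, and a transitive $p$-subgroup of $\SG_p$ has order exactly $p$ by orbit--stabilizer plus the Sylow bound; and the kernel of $G\to\SG_p$ is precisely $G\cap T$ for the torus diagonal in the eigenline basis. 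The descent to $\PGL_p(\mathbb{C})$ via the preimage $\widetilde{G}$ also works cleanly, since $\pi^{-1}(T)=\widetilde{T}$ (the center lies in every maximal torus), so $G/(G\cap T)\cong\widetilde{G}/(\widetilde{G}\cap\widetilde{T})$. What your route buys is genuine: it makes the only part of the theorem the paper actually uses (via Corollary \ref{corollary_3sgr_gl3_3generators} and Proposition \ref{gl4}) independent of the Borel--Serre machinery, and it explicitly constructs the torus rather than invoking its existence; it also quietly repairs a sloppiness in the statement itself, which hypothesizes a compact $\mathscr{G}$ but is applied to the non-compact groups $\SL_p(\mathbb{C})$ and $\PGL_p(\mathbb{C})$ --- your averaging into $\mathrm{SU}(p)$, or simply observing that the Clifford argument needs no compactness, handles that. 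The only blemishes are cosmetic: for $p=2$ a $p$-cycle permutation matrix has determinant $-1$ and must be adjusted by a scalar to land in $\SL_2$, but that looseness is already in the paper's phrasing, and the paper only uses $p=3$.
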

\begin{corollary}\label{corollary_3sgr_gl3_3generators}
Assume that a $3$-subgroup $G$ of $\GL_3(\mathbb{C})$ can not be generated by $3$ elements.
Then either $G$ contains an abelian group $\mathrm{C_3^3}$, or in some basis it contains 
the following matrices $($here $\zeta$ is a primitive cube root of unity and $\lambda_i$ are roots of unity of degree $3^{k_i})$.
 \begin{align}\label{eq_matrix_in_3group}
 \sigma = \begin{pmatrix}
   0 & \lambda_2 & 0 \\ 0& 0&\lambda_3 \\ \lambda_1 & 0&0  
  \end{pmatrix}
 &&
  t=\begin{pmatrix}
   1&0&0\\ 0&\zeta&0 \\0&0&\zeta^2
  \end{pmatrix}
 \end{align}	
\end{corollary}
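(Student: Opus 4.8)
The plan is to deduce this from Theorem \ref{thm_psgroup_of_Lie}. Since $G$ is finite it preserves a Hermitian form, so after a change of basis we may assume $G\subset U(3)$. Applying the first assertion of Theorem \ref{thm_psgroup_of_Lie} to this compact group, $G$ lies in the normalizer $N(T)$ of the diagonal maximal torus $T$. Writing $A=G\cap T$ for the diagonal part, we obtain an extension
\begin{equation*}
 1\to A\to G\to W\to 1,
\end{equation*}
where $A$ is abelian and $W$ is a $3$-subgroup of the Weyl group $\SG_3$. As the only $3$-subgroups of $\SG_3$ are the trivial group and the cyclic group generated by a $3$-cycle, there are exactly two cases to treat, and throughout I would read the hypothesis as saying that $G$ requires the full three generators permitted by Lemma \ref{lemma_gl3}, i.e. $d(G):=\dim_{\mathbb{F}_3}G/\Phi(G)=3$.

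First I would treat the split case $W=1$. Then $G=A$ is a diagonal abelian $3$-group, so $G\cong \mathrm{C}_{3^{a_1}}\times\mathrm{C}_{3^{a_2}}\times\mathrm{C}_{3^{a_3}}$ and its minimal number of generators equals its rank. If $G$ cannot be generated by fewer than three elements, this rank must be $3$; replacing the generators by suitable powers shows that $G$ contains $\mathrm{C}_3^{3}$, which is the first alternative.

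Next I would treat the case $W\cong \mathrm{C}_3$. Choose $\sigma\in G$ lifting the $3$-cycle; in the coordinate basis it is a monomial matrix, and after rescaling the basis vectors its nonzero entries become roots of unity of $3$-power order, so $\sigma$ has the shape of the left-hand matrix in \eqref{eq_matrix_in_3group}. Conjugation by $\sigma$ permutes the diagonal entries cyclically, so $\overline{A}=A/3A$ becomes a module over the group algebra $\mathbb{F}_3[\mathrm{C}_3]$, a local ring whose indecomposable modules have dimensions $1$, $2$ and $3$. By the Burnside basis theorem $d(G)=\dim_{\mathbb{F}_3}G/\Phi(G)$, and a direct computation of $\Phi(G)=[G,G]G^3\subset A$, using $[G,G]=(\sigma-1)A$ and that the norm $N=1+\sigma+\sigma^2$ lands in $(\sigma-1)A+3A$, gives
\begin{equation*}
 d(G)=1+\dim_{\mathbb{F}_3}\overline{A}\big/\big((\sigma-1)\overline{A}+\langle\sigma^3\rangle\big),
\end{equation*}
where $\sigma^3$ is read inside $\overline{A}$. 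Since $(\sigma-1)$ is the maximal ideal of $\mathbb{F}_3[\mathrm{C}_3]$, the quotient by $(\sigma-1)\overline{A}$ computes the number of indecomposable summands of $\overline{A}$, so $G$ fails to be generated by two elements exactly when $\overline{A}$ splits off a trivial summand alongside a two-dimensional one. Adapting the basis to this splitting exhibits a central scalar (a $3$-subgroup of a one-dimensional torus) together with the diagonal element $t=\mathrm{diag}(1,\zeta,\zeta^2)$ lying in the two-dimensional summand; then $\sigma$ and $t$ take the forms in \eqref{eq_matrix_in_3group}, they generate a copy of the Heisenberg group $\H_3$, and $G$ contains both, which is the second alternative.

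The main obstacle will be the module-theoretic bookkeeping in this last case: one must verify that, for a diagonal $\sigma$-stable subgroup $A$, the coinvariants $\overline{A}/(\sigma-1)\overline{A}$ can exceed one dimension only through a two-summand decomposition, and that the contribution of the cube $\sigma^3$ to $\Phi(G)$ does not collapse this dimension. This forces one to track how $\sigma^3$, a scalar built from the entries $\lambda_i$, sits inside $A$, and to arrange coordinates so that the resulting generators are precisely $\sigma$ and $t=\mathrm{diag}(1,\zeta,\zeta^2)$ rather than some conjugate.
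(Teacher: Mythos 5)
Your skeleton matches the paper's own proof: both invoke Theorem \ref{thm_psgroup_of_Lie} to place $G$ inside the normalizer of the diagonal torus, set $A=G\cap T$, and split according to the image of $G$ in the Weyl group $\SG_3$; your reading of the hypothesis as ``$G$ needs three generators'' (i.e.\ $d(G)=3$) is also the paper's, since by Lemma \ref{lemma_gl3} the literal statement would be vacuous, and your $W=1$ case is correct. Where you diverge is the case $W\cong\mathrm{C}_3$: the paper finishes it in two lines --- for a nontrivial $t\in A$, either $t$ and $\sigma t\sigma^{-1}$ are non-proportional, which produces a rank-$3$ abelian subgroup and hence the first alternative, or every element of $A$ is a scalar multiple of a power of $\mathrm{diag}(1,\zeta,\zeta^2)$, which together with $\mathrm{rk}(A)=2$ gives the second alternative --- whereas you set up a Frattini computation over $\mathbb{F}_3[\mathrm{C}_3]$.

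The genuine gap sits exactly at the step you flagged: your ``exactly when'' criterion is wrong. If $\overline{A}=A/A^3$ splits as a trivial summand plus a two-dimensional indecomposable, i.e.\ $J_1\oplus J_2$, then $\dim_{\mathbb{F}_3}\overline{A}=3$, so $A$ has rank $3$ and its $3$-torsion subgroup is $\mathrm{C}_3^3\subset G$: that configuration lands in the \emph{first} alternative. The groups that genuinely realize the second alternative have $\mathrm{rk}(A)=2$ and $\overline{A}\cong J_1\oplus J_1$: since $\sigma t\sigma^{-1}t^{-1}=\zeta^{\pm1}I$ for $t=\mathrm{diag}(1,\zeta,\zeta^2)$, triviality of the $\sigma$-action on $A/A^3$ amounts to $\zeta I\in A^3$, i.e.\ the scalar part of $A$ has order at least $9$, and one must additionally check that $\sigma^3$ maps into $A^3\cdot(\sigma-1)A$. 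A concrete witness: take a scalar $\omega$ of order $9$ with $\omega^3=\zeta$, let $\sigma$ be the plain cyclic permutation matrix, and set $G=\langle \omega I,\sigma,t\rangle$. Then $\Phi(G)=\langle\zeta I\rangle$, $|G|=81$, $d(G)=3$, $G$ contains no $\mathrm{C}_3^3$ (an abelian subgroup surjecting onto the $\mathrm{C}_3\times\mathrm{C}_3$ image of $G$ in $\PGL_3(\mathbb{C})$ would force two non-commuting lifts to commute, since lifts differ by central scalars), and $\overline{A}\cong J_1\oplus J_1$ --- so your criterion would declare this $G$ two-generated, and your argument would never produce the matrices \eqref{eq_matrix_in_3group} for precisely the groups the second alternative exists for. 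Even after correcting the criterion to the $J_1\oplus J_1$ case, there remains a step you only gesture at: an element $a\in A$ generating $A/(A\cap\{\text{scalars}\})\cong\mathrm{C}_3$ has the form $\eta\cdot\mathrm{diag}(1,\zeta,\zeta^2)^{\pm1}$ with $\eta$ scalar, and one must multiply $a$ by a power of the scalar generator of $A$ to make it of order $3$ (if this is impossible, $A$ is cyclic of rank $1$, a contradiction with $\mathrm{rk}(A)=2$) and then conjugate by a cyclic permutation of the basis, which fixes the monomial shape of $\sigma$, to land on $t$ itself rather than $\zeta^j t$.
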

\begin{proof}
 The group $G\cap T$ is abelian. If its rank equals $3$, then $G$ contains $\mathrm{C_3^3}$. 
 Since $G$ can not be generated by 2 elements, $\mathrm{rk}(G\cap T)$ is greater than or equal to $2$.
 Assume, that it equals $2$.
 
 By Theorem \ref{thm_psgroup_of_Lie} in some basis the group $G$ is a subgroup of a normalizer of 
 the torus  $T\subset\GL_3(\mathbb{C})$ of diagonal matrices. Thus, there is an element $\sigma\in G$ such that
 the conjugation by $\sigma$ acts on $G\cap T$ as a permutation of coordinates.
 
 Consider $t$ a non trivial element of $G\cap T$. If elements $t$ and $\sigma t\sigma^{-1}$ are not 
 proportional, elements~$\sigma$ and~$t$ generate an abelian subgroup of rank $3$ in $G\cap T$ 
 and this contradicts to our assumption. Thus, element $\sigma t\sigma^{-1}$ is proportional to $t$; and $G$
 contains a scalar matrix or the second matrix in \eqref{eq_matrix_in_3group}.
 Since the rank of $G\cap T$ equals $2$, they both are in $G$.
\end{proof}
Now we can estimate the number of generators of a $3$-subgroup in $\GL_n(\mathbb{C})$ for $n$ greater than $3$.
\begin{proposition}\label{gl4}
 Assume that $3<n<9$ and $G$ is a $3$-subgroup in $\GL_n(\mathbb{C})$. Then $G$ can be generated by $n$ elements.
 If $G$ is a $3$-subgroup in $\PGL_n(\mathbb{C})$, then $G$ can be generated by $n-1$ elements.
 \end{proposition}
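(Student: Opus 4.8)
The plan is to realize $G$ inside the normalizer of a maximal torus and then separate a diagonal abelian part from a permutation part lying in $\SG_n$. Since $G$ is a finite $3$-group, after conjugation it preserves a Hermitian form, so we may assume $G\subset \mathrm{U}(n)$. The unitary group is a compact Lie group, so Theorem \ref{thm_psgroup_of_Lie} applies and produces a torus $T$ with $G\subset N(T)$; taking $T$ to be the diagonal maximal torus, set $A=G\cap T$ and $Q=G/A$, so that $A$ is a diagonal abelian $3$-group (hence $A\subset(\C^*)^n$ can be generated by at most $n$ elements by Lemma \ref{subgroup_of_direct_product}) and $Q$ embeds into the Weyl group $N(T)/T\cong \SG_n$ as a $3$-subgroup acting on the $n$ coordinates by permutation. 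The restriction $n<9$ enters precisely here: for $n\leqslant 8$ the Sylow $3$-subgroup of $\SG_n$ is elementary abelian of rank at most $2$ (rank $\leqslant 1$ for $n\leqslant 5$ and rank $\leqslant 2$ for $6\leqslant n\leqslant 8$), so $Q$ can be generated by at most $2$ elements and acts with at most two nontrivial orbits, each of size $3$; this is exactly what breaks at $n=9$, where the Sylow $3$-subgroup becomes the nonabelian wreath product $\mathrm{C}_3\wr \mathrm{C}_3$.

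The next step is the generation estimate for the extension $1\to A\to G\to Q\to 1$. Rather than applying Lemma \ref{generators_of_group_extension} naively (which would give the too-weak bound $n+2$), I would use that $A$ is a module over the local group ring $\mathbb{Z}_3[Q]$: conjugation by lifts of generators of $Q$ spreads any element of $A$ over its whole $Q$-orbit. Concretely, since the Frattini subgroup $\Phi(G)$ contains $G^3[G,G]$ and hence contains $A^3$ together with all commutators $\sigma(a)a^{-1}$ for $a\in A$, a Frattini-quotient computation gives
\begin{equation*}
 d(G)\leqslant d(Q)+\dim_{\mathbb{F}_3}\bigl(A/3A\bigr)_Q,
\end{equation*}
where $(A/3A)_Q$ denotes the coinvariants, i.e.\ the largest quotient of $A/3A$ on which $Q$ acts trivially. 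Thus the whole proposition for $\GL_n$ reduces to the inequality $d(Q)+\dim_{\mathbb{F}_3}(A/3A)_Q\leqslant n$, and I would then run the explicit case analysis over the finitely many admissible $Q\subset\SG_n$ identified above.

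The hard part will be controlling the coinvariants $\dim_{\mathbb{F}_3}(A/3A)_Q$, since it is genuinely larger than the number of $Q$-orbits in general; this is the real content and the direct generalization of Corollary \ref{corollary_3sgr_gl3_3generators}. My approach is to note $A[3]\subset(\mu_3)^n\cong\mathbb{F}_3^n$ is a $Q$-submodule of the permutation module, and to analyze it one $Q$-orbit at a time, reproducing in each nontrivial orbit the mechanism of Corollary \ref{corollary_3sgr_gl3_3generators}: if $\sigma$ lifts a generator of $Q$ and $t\in A$, then non-proportionality of $t$ and $\sigma t\sigma^{-1}$ would force an extra rank in $A\cap T$, so the permutation action pins down the module structure and makes each generator of $Q$ pay for itself. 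Because $Q$ moves at most $6$ of the $n\leqslant 8$ coordinates in at most two triples, this leaves only finitely many explicit diagonal configurations to check, and in each the two saved generators match $d(Q)\leqslant 2$, yielding $d(Q)+\dim_{\mathbb{F}_3}(A/3A)_Q\leqslant n$. Finally, for the $\PGL_n$ statement I would lift $G$ through $1\to\C^*\to\GL_n\to\PGL_n\to 1$ and replace the preimage by its Sylow $3$-subgroup $\widehat{G}$: the scalar subgroup $\widehat{G}\cap\C^*$ occupies the $Q$-invariant constant-diagonal line of the permutation module and contributes one dimension to the coinvariants above, yet dies in $\PGL_n$; discarding it improves the bound from $n$ to $n-1$.
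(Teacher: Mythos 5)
Your skeleton is sound as far as it goes: Theorem \ref{thm_psgroup_of_Lie} does place $G$ in the normalizer of a maximal torus of $\mathrm{U}(n)$, your description of the Sylow $3$-subgroups of $\SG_n$ for $n\leqslant 8$ (rank $\leqslant 2$, at most two nontrivial orbits of size $3$) is correct, and the Frattini inequality $d(G)\leqslant d(Q)+\dim_{\mathbb{F}_3}(A/3A)_Q$ is valid, since $A^3\subset G^3$ and $[G,A]\subset [G,G]\cap A$ both lie in $\Phi(G)$. But the load-bearing step --- the inequality $d(Q)+\dim_{\mathbb{F}_3}(A/3A)_Q\leqslant n$ --- is precisely what you do not prove, and your proposed reduction to ``finitely many explicit diagonal configurations'' fails: $A$ ranges over \emph{all} $Q$-stable subgroups of $(\mu_{3^\infty})^n$, of unbounded exponent, and $\dim(A/3A)_Q$ is not determined by which coordinates $Q$ moves. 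Already for a single free $\mathrm{C}_3$-orbit ($n=3$, $\sigma$ a $3$-cycle) one must exclude that $\sigma$ acts trivially on $A/3A$ while $A$ has rank $3$, which would give coinvariants of dimension $3$; by Pontryagin duality this amounts to showing no finite quotient of $\mathbb{Z}_3[\mathrm{C}_3]$ has $3$-torsion that is $3$-dimensional and $\sigma$-trivial, and ruling it out needs a genuine argument (for instance, that every ideal of the local ring $\mathbb{Z}_3[\mathrm{C}_3]$ is $2$-generated because its multiplicity is $2$, or the proportionality trick of Corollary \ref{corollary_3sgr_gl3_3generators}), not an orbit count. Your one-line appeal to ``non-proportionality forcing extra rank'' gestures at this mechanism but does not deliver it for exponent greater than $3$, nor for two orbits interacting under a rank-$2$ group $Q$.

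The $\PGL_n$ step contains a concretely false supporting claim: the scalar subgroup does \emph{not} always contribute a dimension to the coinvariants. Take $n=3$, $\sigma$ a $3$-cycle and $A=\langle \zeta\cdot\mathrm{Id},\ \mathrm{diag}(1,\zeta,\zeta^2)\rangle$: here $(\sigma-1)A$ is exactly the scalar line, so the scalars already vanish in $(A/3A)_Q$ and killing them in $\PGL_n$ saves nothing in your formula; the bound $n-1$ must then come from $d(Q)+\dim(A/3A)_Q$ being small to begin with, which again requires the unexecuted case analysis. It is worth noting how the paper sidesteps all of this with a single representation-theoretic fact: irreducible representations of a $3$-group have $3$-power dimension, so for $n<9$ the given $n$-dimensional representation splits into blocks of dimensions $1$ and $3$, whence $G$ embeds in a product of cyclic groups and of $3$-subgroups of $\GL_3(\mathbb{C})$; Lemma \ref{subgroup_of_direct_product} together with Lemma \ref{lemma_gl3} gives the bound $n$ immediately, and the $\PGL_n$ bound follows from Corollary \ref{corollary_3sgr_gl3_3generators}, which shows that any lift requiring $n$ generators contains a generator that is scalar (a rank-$n$ diagonal group, or the monomial block \eqref{eq_matrix_in_3group} together with a scalar), hence dies in $\PGL_n$. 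Your route could likely be completed by proving the coinvariant inequality orbit-by-orbit along the duality/two-generation lines sketched above, but as written the proposal has a genuine gap at its central inequality and an incorrect justification in the $\PGL_n$ step.
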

\begin{proof}
The embedding of the group $G$ to $\GL_n(\mathbb{C})$ induces a representation of $G$ of dimension $n$.
Since by assumption $n<9$ this representation is isomorphic to a direct sum of irreducible representations of dimensions 1 and 3.
Thus, we have 
\begin{equation}\label{eq_decomp_subgroup_gl}
G\subset \Pi_{i=1}^{n-3k}G'_i \times \Pi_{j=0}^kG''_j	.
\end{equation}
Here $G'_i$ are cyclic 3-groups and $G''j$ are $3$-subgroups of $\GL_3(\mathbb{C})$.
By Lemmas \ref{subgroup_of_direct_product} and \ref{lemma_gl3} the group~$G$ can be generated by $n$ elements.

Now consider a $3$-subgroup $G$ of  $\PGL_n(\mathbb{C})$. Denote by $\widetilde{G}$ a $3$-subgroup of $\GL_n(\mathbb{C})$ 
which maps surjectively to $G$ under the canonical projection. If $\widetilde{G}$ can be generated by $n-1$ elements, then $G$ also can.
Assume, that $\widetilde{G}$ can not be generated by less than $n$ elements. 

By Corollary \ref{corollary_3sgr_gl3_3generators} the group $G$ either contains an abelian subgroup of rank $n$ 
or
\begin{equation}\label{eq_1}
\widetilde{G} = H'\times H'', 
\end{equation}
where $H'\subset \GL_3(\mathbb{C})$ and~\mbox{$H'' \subset \GL_{n-3}(\mathbb{C})$}.
Moreover, $H'$ contains matrices from \eqref{eq_matrix_in_3group} and the rank of $H\cap T$ is greater than or equal to $2$.

Assume, that $\widetilde{G}$ is abelian of rank $n$. Then its image in $\PGL_n(\mathbb{C})$ can be generated by $n-1$ element.
If $\widetilde{G}$ is not abelian, then we have a decomposition \eqref{eq_1}. The group $H'$ can not be generated by less than 3 elements
only if $\mathrm{rk}(H'\cap T) = 2$ and $H'$ is generated by a scalar matrix and matrices \eqref{eq_matrix_in_3group}. 
Thus, image of $H'$ in $\PGL_3(\mathbb{C})$ can be generated by $2$ elements. The image of $H''$ in $\PGL_{n-3}(\mathbb{C})$ can
be generated by $n-3$ elements. Therefore, we get the result.
\end{proof}
\begin{corollary}\label{corollary_ab_subgroup_in_3gr}
 If $G\subset \PGL_n(\mathbb{C})$ is a $3$-subgroup which can not be generated by less than $k$ elements and $n<9$, 
 then $G$ contains an abelian subgroup $A$ of rank $k$.
\end{corollary}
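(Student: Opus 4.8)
The plan is to lift the problem to $\GL_n(\C)$ and exploit the same decomposition of representations that underlies Proposition \ref{gl4}. Given $G \subset \PGL_n(\C)$ with $d(G)=k$, the minimal number of generators, I would first choose a $3$-subgroup $\widetilde{G}\subset \GL_n(\C)$ mapping surjectively onto $G$, as in the proof of Proposition \ref{gl4}, and write $\pi\colon \widetilde{G}\to G$ for the projection with kernel $Z_0=\widetilde{G}\cap\{\text{scalars}\}$, a cyclic central subgroup. Since $n<9$, the $n$-dimensional representation of $\widetilde{G}$ splits into irreducible constituents of dimensions $1$ and $3$ only, so $\widetilde{G}\subset \prod_i G'_i\times\prod_j G''_j$ with $G'_i$ cyclic and $G''_j\subset\GL_3(\C)$, exactly as in Proposition \ref{gl4}. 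The goal is then to produce an abelian subgroup $A\subset G$ of rank $k$; since $d(G)\leqslant n-1$ by Proposition \ref{gl4}, it suffices to find an abelian subgroup whose rank keeps pace with the generator count.

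For the concrete construction I would use the monomial structure. By the first part of Theorem \ref{thm_psgroup_of_Lie}, applied to a maximal compact subgroup $\mathrm{U}(n)\subset\GL_n(\C)$, the group $\widetilde{G}$ is conjugate into the normalizer of the diagonal torus, hence is monomial; consequently $G$ fits into an extension $1\to\bar{T}\to G\to P\to 1$, where $\bar{T}$ is the abelian image of the diagonal part and $P\subset\SG_n$ is a $3$-subgroup of permutations. For $n\leqslant 8$ the Sylow $3$-subgroup of $\SG_n$ has order at most $9$, so $P$ is elementary abelian of rank at most $2$. On each $3$-dimensional block I would invoke Corollary \ref{corollary_3sgr_gl3_3generators}: a block $G''_j$ requiring three generators contains either $\mathrm{C}_3^3$ or the monomial pair $\sigma,t$ of \eqref{eq_matrix_in_3group}, and in the latter case its image in $\PGL_3(\C)$ is a Heisenberg group, whose abelian rank $2$ matches its contribution to $d(G)$; this is the mechanism by which passing to $\PGL$ absorbs the superfluous scalar generator. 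Combining the $P$-invariant part $\mathrm{C}_{\bar{T}}(P)$ with suitable lifts of the generators of $P$ gives the candidate abelian subgroup, using that over $\mathbb{F}_3$ the invariants and coinvariants of $\bar{T}$ under $P$ have equal dimension, so $\mathrm{C}_{\bar{T}}(P)$ is as large as needed, and that modulo scalars the conjugation action of the permutation part becomes effectively trivial, as already happens for the Heisenberg group in $\PGL_3(\C)$.

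The step I expect to be the main obstacle is controlling the central scalars when the representation is reducible, and here two difficulties must be resolved at once. First, the lifts to $G$ of the two generators of $P$ need not commute: their commutator lies in $\bar{T}$, and one must adjust the lifts, or pass to a subgroup, so that they commute in $G$, which is a splitting problem for a central cyclic extension of exactly the type governing the Heisenberg group. Second, and more seriously, when several non-abelian $3$-dimensional blocks occur, each carries its own central scalar while $\PGL_n(\C)$ identifies all of them through a single scalar quotient; this identification can strand rank, lowering the abelian rank by more than it lowers $d(G)$. Showing that the $P$-invariant abelian part remains large enough across all blocks simultaneously, so that the abelian rank never falls below $k$, is the delicate point, and it is precisely here that the hypothesis $n<9$ (which bounds the blocks to dimension $\leqslant 3$ and forces the rank of $P$ to be at most $2$) must be used to its fullest, together with Lemmas \ref{generators_of_group_extension} and \ref{subgroup_of_direct_product} to reassemble the pieces.
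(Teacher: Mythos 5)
Your skeleton coincides with the paper's intended argument: lift $G$ to a $3$-subgroup $\widetilde{G}\subset\GL_n(\C)$, split $\C^n$ into irreducibles of dimensions $1$ and $3$, and observe via Corollary \ref{corollary_3sgr_gl3_3generators} that the only non-toral building blocks are the groups generated by the matrices \eqref{eq_matrix_in_3group} and scalars, whose images in $\PGL_3(\C)$ are abelian of rank $2$ precisely because $\sigma t\sigma^{-1}=\zeta t$ is a scalar multiple of $t$. The paper's proof consists of exactly this blockwise check of the two model cases (torus images, and images of $\langle\sigma,t,\text{scalars}\rangle$), followed by the assertion that the general case follows. But your text is a plan rather than a proof: both difficulties you name at the end are left unresolved, and the two bridging facts you offer in their place are false as stated. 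The claim that ``modulo scalars the conjugation action of the permutation part becomes effectively trivial'' holds only within a single $3$-block: already for $n=6$ with two blocks, the commutator $[\sigma_1,t_1]=\zeta I_3\oplus I_3$ is a block scalar but not a global scalar, so the images of $\sigma_1$ and $t_1$ do \emph{not} commute in $\PGL_6(\C)$. Likewise $\bar{T}$ is in general not an $\mathbb{F}_3$-vector space, and an invariants-equals-coinvariants remark, even where it makes sense, yields no lower bound of the required size on the rank of $\mathrm{C}_{\bar{T}}(P)$.

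Moreover, your second worry --- that several nonabelian blocks sharing the single scalar quotient of $\PGL_n(\C)$ can ``strand rank'' --- is not a technicality to be smoothed over; it is the actual crux, and neither your plan nor, for that matter, the paper's two-sentence blockwise argument confronts it. Concretely, take $\widetilde{G}=\H_3\times\H_3\subset\GL_3(\C)\times\GL_3(\C)\subset\GL_6(\C)$, each factor generated by matrices of the form \eqref{eq_matrix_in_3group} with center the block scalars $\langle\zeta I_3\rangle$. The intersection of $\widetilde{G}$ with the global scalars is the diagonal $\mathrm{C}_3$, so the image $G\subset\PGL_6(\C)$ has order $3^5$ and Frattini quotient $\mathrm{C}_3^4$, hence cannot be generated by fewer than $4$ elements. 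On the other hand, if $B\subset\widetilde{G}$ is the full preimage of an abelian subgroup of $G$, and $\omega_i$ denotes the commutator form of the $i$-th factor on $\H_3/Z(\H_3)\cong\mathbb{F}_3^2$, then the image of $B$ in $\mathbb{F}_3^4$ must be isotropic for the nondegenerate symplectic form $\omega_1\oplus(-\omega_2)$, hence of dimension at most $2$; therefore $|B|\leqslant 3^4$ and every abelian subgroup of $G$ has order at most $3^3$, i.e.\ rank at most $3<4$. So the assertion cannot be recovered by blockwise bookkeeping plus Lemmas \ref{generators_of_group_extension} and \ref{subgroup_of_direct_product} alone: a complete argument must either exclude such central products of Heisenberg blocks (as indeed happens in the orthogonal setting of Lemma \ref{lemma_3gr_on_quadric}, where a $3$-dimensional irreducible block with nontrivial central character is never self-dual and so cannot preserve a quadratic form on its own) or trade their contribution against the generator count by a genuinely new computation. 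You correctly located the obstacle; locating it is not the same as overcoming it, and as written your proposal does not prove the statement.
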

\begin{proof}
 This assertion is true for images of subgroups of diagonal torus in $\GL_n(\mathbb{C})$ as well as for images of groups
 generated by matrices \eqref{eq_matrix_in_3group} and scalar matrices. Therefore, it is true for the image of any~$3$-subgroup 
 in  $\GL_n(\mathbb{C})$.
\end{proof}

\begin{lemma}\label{3-subgroups_in_gl3Z}
 Any $3$-subgroup in the group $\GL_2(\mathbb{Z})$ or $\GL_3(\mathbb{Z})$ is cyclic.
\end{lemma}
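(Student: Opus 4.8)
The plan is to pass to the induced faithful rational representation and reduce everything to excluding a single obstruction, the subgroup $\mathrm{C}_3\times\mathrm{C}_3$. Any $3$-subgroup $G$ of $\GL_2(\mathbb{Z})$ or $\GL_3(\mathbb{Z})$ yields a faithful representation of $G$ on $\mathbb{Q}^n$ with $n\leqslant 3$. First I would record that such a $G$ has exponent $3$: an element of order $9$ would have a primitive ninth root of unity as an eigenvalue, but the minimal polynomial $\Phi_9$ of such a root has degree $\varphi(9)=6$, which cannot divide a characteristic polynomial of degree $n\leqslant 3$. Hence every nontrivial element of $G$ has order $3$. This step is not strictly necessary but clarifies the situation.

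Next I would invoke the structure theory of $p$-groups for the odd prime $p=3$: a finite $3$-group containing no subgroup isomorphic to $\mathrm{C}_3\times\mathrm{C}_3$ is cyclic. Arguing by contradiction, I may therefore assume that $G$ contains a copy of $\mathrm{C}_3\times\mathrm{C}_3$, and restricting the representation produces a faithful action of $\mathrm{C}_3\times\mathrm{C}_3$ on $\mathbb{Q}^n$ with $n\leqslant 3$.

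The heart of the argument is then a dimension count for rational representations of $A=\mathrm{C}_3\times\mathrm{C}_3$. Since $A$ is abelian, over $\mathbb{C}$ the representation splits into characters $\chi\in\widehat{A}\cong\mathbb{F}_3^2$. Each nontrivial character is defined over $\mathbb{Q}(\zeta)$ and is Galois-conjugate to $\chi^2$, so it contributes a $2$-dimensional irreducible $\mathbb{Q}$-representation $V_\chi$ whose kernel is the line $\ker\chi\subset\mathbb{F}_3^2$ (here I use $\ker\chi=\ker\chi^2$). For the action to be faithful the kernels of the summands must intersect trivially, which forces at least two summands $V_{\chi_1},V_{\chi_2}$ with $\ker\chi_1\neq\ker\chi_2$; two distinct lines in $\mathbb{F}_3^2$ meet only in $0$. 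Hence any faithful rational representation of $\mathrm{C}_3\times\mathrm{C}_3$ has dimension at least $2+2=4$, contradicting $n\leqslant 3$. Therefore $G$ cannot contain $\mathrm{C}_3\times\mathrm{C}_3$ and is cyclic, which settles both $\GL_2(\mathbb{Z})$ and $\GL_3(\mathbb{Z})$ simultaneously.

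I expect the only delicate point to be the reduction to $\mathrm{C}_3\times\mathrm{C}_3$, namely citing the correct form of the $p$-group fact, valid precisely because $p=3$ is odd (for $p=2$ the generalized quaternion groups are an exception). The representation-theoretic count itself is elementary.
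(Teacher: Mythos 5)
Your proof is correct, and it takes a genuinely different route from the paper, which in fact gives no argument for this lemma at all: it simply cites the classification of finite subgroups of $\GL_3(\mathbb{Z})$ in \cite{finite_subgroups_in_gl3Z}, with a passing remark that the statement also follows from the other appendix assertions (the normalizer-of-a-torus theorem and the analysis of $3$-subgroups of $\GL_3(\mathbb{C})$), without carrying this out. Your argument is self-contained and rests on two sound ingredients: the odd-$p$ structure fact that a noncyclic $3$-group contains $\mathrm{C}_3\times\mathrm{C}_3$ (your caveat at $p=2$ about generalized quaternion groups is exactly the right one; the fact reduces to the unique-subgroup-of-order-$p$ theorem, since any subgroup of order $3$ distinct from a central one of order $3$ generates with it a copy of $\mathrm{C}_3\times\mathrm{C}_3$), and the count showing that a faithful rational representation of $\mathrm{C}_3\times\mathrm{C}_3$ has dimension at least $4$: by complete reducibility over $\mathbb{Q}$ the representation is a sum of $\mathbb{Q}$-irreducibles, every nontrivial one is $2$-dimensional with kernel $\ker\chi=\ker\chi^2$, a line in $\mathbb{F}_3^2$, and faithfulness forces two constituents with distinct kernel lines, hence dimension $\geqslant 2+2=4>3$. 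The exponent-$3$ observation via $\deg\Phi_9=\varphi(9)=6>3$ is, as you say, dispensable. Compared with the paper's citation, your route buys a stronger and more transparent statement --- cyclicity of $3$-subgroups of $\GL_2(\mathbb{Q})$ and $\GL_3(\mathbb{Q})$, not merely of the integral groups, with the threshold visible: the sum $V_{\chi_1}\oplus V_{\chi_2}$ over two distinct kernel lines realizes $\mathrm{C}_3\times\mathrm{C}_3$ faithfully already in $\GL_4(\mathbb{Z})$, so your bound is sharp --- at the cost of invoking the classical $p$-group theorem that the paper avoids by outsourcing the whole lemma to a reference.
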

This is proved in \cite{finite_subgroups_in_gl3Z}; so we skip the proof despite it follows from the above assertions.
\begin{lemma}\label{3-group_acts_on_Z^2}
 Consider a $3$-group $G$ with an action on the lattice $\Lambda\cong\mathbb{Z}^2$. 
 Then the invariant lattice~$\Lambda^G$ can not be of rank $1$.
\end{lemma}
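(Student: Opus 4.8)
The plan is to reduce the statement to an analysis of the possible images of $G$ in $\GL_2(\mathbb{Z})$. First I would observe that the action of $G$ on $\Lambda$ factors through a homomorphism $G\to\GL_2(\mathbb{Z})$, and that the invariant sublattice $\Lambda^G$ coincides with the sublattice fixed by the image of this homomorphism. Thus it suffices to understand $3$-subgroups of $\GL_2(\mathbb{Z})$ and their fixed lattices. By Lemma \ref{3-subgroups_in_gl3Z} the image is cyclic. Moreover it cannot contain an element of order $9$: such an element would have a primitive $9$-th root of unity as an eigenvalue, so the cyclotomic polynomial $\Phi_9$ of degree $6$ would divide its characteristic polynomial of degree $2$, which is impossible. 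Since the image is a cyclic $3$-group with no element of order $9$, it is either trivial or isomorphic to $\mathrm{C}_3$.

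Next I would treat the two cases separately. If the image is trivial, then $G$ acts trivially on $\Lambda$, so $\Lambda^G=\Lambda$ has rank $2$. If the image is isomorphic to $\mathrm{C}_3$, generated by some $g$ of order $3$, I would compute $\Lambda^g=\ker(g-\mathrm{id})$. The minimal polynomial of $g$ divides $x^3-1=(x-1)(x^2+x+1)$, has degree at most $2$, and is not $x-1$ because $g\ne\mathrm{id}$; the only remaining possibility is $x^2+x+1$. Hence $g$ has no eigenvalue equal to $1$, so $\ker(g-\mathrm{id})=0$ and $\Lambda^G$ has rank $0$.

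In either case the rank of $\Lambda^G$ is $0$ or $2$, and never $1$, which is exactly the assertion of the lemma. The only subtle point is ruling out the eigenvalue $1$ for the order-$3$ generator, which is precisely the degree argument on cyclotomic factors of $x^3-1$; the remaining steps are routine bookkeeping about the image in $\GL_2(\mathbb{Z})$.
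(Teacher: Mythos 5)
Your proof is correct, and it follows the paper's skeleton — reduce to the image of $G$ in the automorphism group of $\Lambda$, invoke Lemma \ref{3-subgroups_in_gl3Z} to get a cyclic image, then analyze eigenvalues of a generator — but the endgame differs. The paper keeps the image as an arbitrary $\mathrm{C}_{3^n}$ and argues via determinants: it identifies the image inside $\SL_2(\mathbb{Z})$, so the eigenvalues of the generator $\gamma$ satisfy $\lambda_1\lambda_2=1$; if $\Lambda^G\ne 0$ then $\lambda_1=1$, hence $\lambda_2=1$, and so $\Lambda^G$ is all of $\Lambda$. You instead first pin the image down to $\mathrm{C}_3$ via the cyclotomic degree bound ($\deg\Phi_9=6>2$) and then exclude the eigenvalue $1$ by a minimal-polynomial argument: the minimal polynomial divides the squarefree $x^3-1$, has degree at most $2$, and is not $x-1$, so it must be $x^2+x+1$. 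Your route is slightly longer but buys two things the paper glosses over: first, the paper's claim that $\Aut(\Lambda)\cong\SL_2(\mathbb{Z})$ is imprecise (it is $\GL_2(\mathbb{Z})$; the relation $\lambda_1\lambda_2=1$ really needs the observation that an element of odd order has determinant $+1$), whereas you never need a determinant constraint; second, the paper's final step — from $\lambda_1=\lambda_2=1$ to $\gamma=\mathrm{id}$ — implicitly uses that $\gamma$ is semisimple (a nontrivial unipotent matrix has both eigenvalues $1$ and fixed lattice of rank $1$, but has infinite order), and your squarefree minimal-polynomial argument makes this diagonalizability explicit rather than tacit. Both proofs are valid; yours is the more airtight write-up of essentially the same idea.
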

\begin{proof}
 The group $G$ maps to the group of automorphisms of $\Lambda$ which is isomorphic to $\SL_2(\mathbb{Z})$. 
 By Lemma \ref{3-subgroups_in_gl3Z} the image of group $G$ is cyclic and it is isomorphic to $\mathrm{C}_{3^n}$.
 Denote the generator of this group by $\gamma$. The action of the operator $\gamma$ on $\Lambda$ has two eigenvalues $\lambda_1$ and $\lambda_2$
 If the invariant sublattice $\Lambda^G$ is non-trivial, then one eigenvalue $\lambda_1$ equals 1. Since $\lambda_1\cdot\lambda_2=1$, we get that
 $\lambda_2$ also equals 1. Thus, either $\Lambda^G$ equals 0 or $\Lambda$.
\end{proof}

\subsection{Quadric hypersurfaces}
In this section we study quadric hypersurfaces and 3-groups with actions on them.
\begin{lemma}\label{lemma_3gr_on_quadric}
 Assume that $X$ is a quadric hypersurface in $\p^{d+1}$ and $d<7$
 If $G$ is a $3$-subgroup in $\Aut(X)$, then $G$ can be generated by $d$ elements.
\end{lemma}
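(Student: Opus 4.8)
The plan is to split into the cases where $X$ is smooth and where $X$ is singular, and in each case to reduce to the linear-algebraic results of the appendix.

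First I would treat the smooth quadric, where $X$ is cut out by a nondegenerate quadratic form $Q$ on a vector space $V$ of dimension $n=d+2\le 8$ and $\Aut(X)$ is the projective orthogonal group $\mathrm{PO}(V,Q)\subset\PGL_n(\C)$ with $n<9$. By Corollary~\ref{corollary_ab_subgroup_in_3gr} it suffices to bound the rank of an abelian $3$-subgroup $A\subset\Aut(X)$ by $d$. Since the kernel of $\mathrm{O}(V,Q)\to\mathrm{PO}(V,Q)$ is $\{\pm I\}$, a $2$-group, the Sylow $3$-subgroup of the preimage of $A$ maps isomorphically onto $A$; thus $A$ lifts to an abelian $3$-group $\widetilde A\subset\mathrm{O}(V,Q)$ acting faithfully and diagonalizably on $V$. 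Decomposing $V=\bigoplus_\chi V_\chi$ into character eigenspaces, the form pairs $V_\chi$ with $V_{\chi^{-1}}$; as $\widetilde A$ is a $3$-group, $\chi=\chi^{-1}$ forces $\chi$ trivial, so each nontrivial character occurs in a dual pair $\{\chi,\chi^{-1}\}$ spanning a subspace of dimension at least $2$. If there are $t$ such pairs, then $2t\le n$, and since the appearing characters generate $\mathrm{Hom}(\widetilde A,\C^*)$ by faithfulness, $\mathrm{rk}(A)=\mathrm{rk}(\widetilde A)\le t\le\lfloor (d+2)/2\rfloor\le d$. Corollary~\ref{corollary_ab_subgroup_in_3gr} then shows $G$ is generated by at most $d$ elements.

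Next I would treat a singular, reduced, irreducible quadric, so $Q$ has rank $r$ with $3\le r\le d+1$. Then $X$ is a cone with vertex $\Lambda=\p(\ker Q)\cong\p^{d+1-r}$ over a smooth quadric $X_0\subset\p^{r-1}$ of dimension $r-2$, and $\Lambda$ is exactly the singular locus of $X$, hence $G$-invariant. Blowing up $\Lambda$ produces a smooth $G$-variety $\widetilde X$ with a canonical, hence $G$-equivariant, structure of a $\p^{d+2-r}$-bundle $\widetilde X\to X_0$. By Lemma~\ref{lemma_Gbundle}, a $3$-subgroup of the automorphism group of the fiber $\p^{d+2-r}$ is generated by $d+2-r$ elements (Proposition~\ref{gl4}, or Lemmas~\ref{lemma_gl3} and~\ref{gl2} in low dimension), while a $3$-subgroup of $\Aut(X_0)$ for the smooth quadric $X_0$ of dimension $r-2<7$ is generated by $r-2$ elements by the smooth case above. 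Altogether $G$ is generated by $(d+2-r)+(r-2)=d$ elements. The degenerate possibilities are immediate: a rank-$1$ form gives $X\cong\p^d$ with $\Aut(X)=\PGL_{d+1}(\C)$, handled by Proposition~\ref{gl4}, and a $3$-group cannot interchange the two components of a reducible (rank-$2$) quadric, so it reduces to the action on a hyperplane.

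The hard part will be the passage through the singular cone: I must argue that the resolution $\widetilde X\to X_0$ genuinely carries a $G$-equivariant projective-bundle structure, so that Lemma~\ref{lemma_Gbundle} applies even though a quadric cone can possess non-linear automorphisms. The point is that the contraction of $\widetilde X$ onto $X_0$ is canonical — it is the unique extremal contraction over the exceptional locus of $\Lambda$ — so every automorphism of $X$, linear or not, lifts to $\widetilde X$ and descends to an automorphism of $X_0$. Once this equivariance is in place the count is routine, and the smooth-quadric character computation is the only genuinely new input.
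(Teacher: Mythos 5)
Your core argument is the same as the paper's: in the smooth case you reduce, via Corollary~\ref{corollary_ab_subgroup_in_3gr}, to bounding the rank of an abelian $3$-subgroup by $\lfloor (d+2)/2\rfloor\leqslant d$, and in the singular case you blow up the vertex and apply Lemma~\ref{lemma_Gbundle} together with Proposition~\ref{gl4} to the resulting projective bundle over a smooth quadric of smaller dimension, with the same count $(d+2-r)+(r-2)=d$. The one genuine difference is how the abelian rank bound is obtained. The paper simply asserts that any abelian subgroup of $\PSO_{d+2}(\C)$ lies in a maximal torus, of dimension $\lfloor (d+2)/2\rfloor$; you instead lift $A$ to an abelian $3$-group $\widetilde A\subset \mathrm{O}(V,Q)$ through the order-two kernel (the Sylow argument is correct), decompose $V$ into character eigenspaces, and use that $Q$ pairs $V_\chi$ perfectly with $V_{\chi^{-1}}$ while $\chi=\chi^{-1}$ forces $\chi=1$ in a $3$-group, so nontrivial characters occur in dual pairs and $\mathrm{rk}(A)\leqslant t\leqslant \lfloor (d+2)/2\rfloor$. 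This buys you something: torality of \emph{arbitrary} abelian subgroups of a projective orthogonal group is a delicate claim as the paper states it, whereas your computation uses only the diagonalizability of a finite abelian group of odd order and is self-contained. Your care about equivariance of the contraction $\widetilde X\to X_0$ in the cone case is also sound and slightly more explicit than the paper, which just asserts the bundle structure is $G$-invariant.

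One concrete error, though it lies outside the cases the paper itself treats: your rank-$2$ reduction is wrong. First, the restriction of $G$ to one hyperplane $H_1$ need not be faithful --- the class of $\mathrm{diag}(1,\zeta,1,\dots,1)$ in $\PGL_{d+2}(\C)$ fixes $\{x_1=0\}$ pointwise while preserving $X=\{x_0x_1=0\}$, so ``it reduces to the action on a hyperplane'' loses a cyclic kernel and the count becomes $d+1$, not $d$. Worse, the bound genuinely fails for reducible quadrics: the full diagonal torus of $\PGL_{d+2}(\C)$ preserves $\{x_0x_1=0\}$, so its $3$-torsion gives $(\mathrm{C}_3)^{d+1}\subset\Aut(X)$, which cannot be generated by $d$ elements. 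The lemma must therefore be read with $X$ integral (rank of $Q$ at least $3$), which is how it is used throughout the paper --- the quadrics there arise as irreducible varieties of minimal degree or as quadrics containing a nondegenerate irreducible threefold --- and is also the only case the paper's own proof covers (smooth, or a cone over a smooth quadric). Your rank-$1$ remark (reduction to $\p^d$ and Proposition~\ref{gl4}) is fine; just delete or restrict the rank-$2$ claim.
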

\begin{proof}
 If $X$ is a smooth hypersurface, then $G$ is a subgroup in $\PSO_{d+2}(\mathbb{C})$. 
 By Corollary \ref{corollary_ab_subgroup_in_3gr} if $G$ can not be generated by less than $m$
 elements, then there is an abelian subgroup $A\subset G$ of rank $m$. 
 
 However, any abelian subgroup of $\PSO_{d+2}(\mathbb{C})$ is a subgroup of the torus of this group 
 and the dimension of torus of $\PSO_{d+2}(\mathbb{C})$ equals~$\lfloor \frac{d+2}{2} \rfloor$.
 Thus, we get the result:
 \begin{equation*}
  m\leqslant \left\lfloor \frac{d+2}{2} \right\rfloor \leqslant d.
 \end{equation*}
 
 If $X$ is a singular quadratic hypersurface, it is a cone with a vertex $Z$ over a smooth quadric hypersurface.
 After a blow up of this vertex we get a $G$-invariant projective bundle over a smooth quadric hypersurface with a 
 faithful action of $G$. Lemma  \ref{lemma_Gbundle} and Proposition~\ref{gl4} implies the result.
\end{proof}
Now we estimate the number of generators of a $3$-group with an action on the intersection of several quadric surfaces.
\begin{lemma}\label{lemma_intersection_3_quadrics}
 Consider a non-empty proper intersection $Y$ of $3$ quadric surfaces in $\p^3$.
 Assume that $G$ is a $3$-group with an action on $Y$. Then there exists a subgroup $H$ in $G$ of index less than of equal to $3$ 
 such that $H$ preserves a point $y$ in $Y$.
\end{lemma}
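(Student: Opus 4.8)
The plan is to exploit the fact that a proper intersection of three quadrics in $\p^3$ is a finite set of points of bounded cardinality, and then apply the orbit--stabilizer theorem to the $3$-group $G$. The group-theoretic part will be immediate once the counting is in place; the only real content is a Bézout-type bound on $\#Y$.

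First I would observe that since the intersection $Y$ of the three quadric surfaces is \emph{proper}, it has the expected codimension $3$ in $\p^3$, hence dimension $0$; thus $Y$ is a finite set of points. By Bézout's theorem the length of this $0$-dimensional scheme is at most $2\cdot 2\cdot 2 = 8$, so $Y$ consists of at most $8$ distinct points.

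Next, $G$ acts on the finite set $Y$, and this action partitions $Y$ into orbits whose sizes are powers of $3$, since $G$ is a $3$-group. Because $\#Y \leqslant 8 < 9 = 3^2$, no orbit can have length $9$ or greater; hence every $G$-orbit in $Y$ has length $1$ or $3$. Finally I would choose any point $y \in Y$ and set $H = \Stab(y)$. By the orbit--stabilizer theorem the index $[G:H]$ equals the length of the orbit of $y$, which is $1$ or $3$, and $H$ fixes $y$ by construction; this produces the required subgroup.

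The main (and essentially only) obstacle is ensuring the Bézout bound applies, namely that the hypothesis ``proper intersection'' genuinely forces $Y$ to be $0$-dimensional so that $\#Y \leqslant 8 < 9$. Once that is secured, everything reduces to the elementary fact that a $3$-group acting on a set of fewer than $9$ points can have no orbit of length exceeding $3$.
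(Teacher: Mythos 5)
Your reading of the hypothesis is where the argument breaks. In this paper ``proper'' does not mean a dimensionally proper intersection (expected codimension $3$); it means that $Y$ is a \emph{proper subvariety} of $\p^3$, i.e. $Y\ne\p^3$. This is forced by the way the lemma is used: in the proof of Lemma \ref{lemma_3gr_g_equals_5} one takes $X\subset\p^6$ cut out by three quadrics and intersects it with a $G$-invariant linear subspace $\Pi\cong\p^3$; the only facts established there are that $Y=X\cap\Pi$ is non-empty and is not all of $\Pi$. Nothing guarantees transversality, so the three quadratic forms restricted to $\Pi$ may be degenerate, share components, or be proportional, and $Y$ can have dimension $1$ (a curve of total degree $4$) or even $2$ (a quadric surface). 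Consequently the B\'ezout count $\#Y\leqslant 8$, which is the entire engine of your proof, is unavailable in general: your argument proves the lemma only in the case $\dim(Y)=0$, and would fail exactly in the application for which the lemma was designed.

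In the $0$-dimensional case your argument does agree with the paper's (orbits of a $3$-group on at most $8<9$ points have length $1$ or $3$; take $H=\Stab(y)$). But the paper must, and does, treat two further cases, each needing a different idea. For $\dim(Y)=1$: if the degree-$4$ curve is singular, it has at most $6$ singular points, $G$ permutes them, and the same orbit count applies to the singular locus; if it is a smooth irreducible quartic, it is the base locus of a pencil of quadrics containing exactly $4$ singular members, and the subgroup $H$ of index $\leqslant 3$ preserving one (in fact each) singular member acts trivially on the pencil and hence fixes $Y$ pointwise. For $\dim(Y)=2$: $Y$ is a quadric surface, and a fixed point is found either via the vertex of the cone or on $\p^1\times\p^1$ using Lemma \ref{gl2}. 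To repair your proposal you would need to add these two cases; the purely combinatorial orbit argument cannot be made to cover them.
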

\begin{proof}
 The dimension of $Y$ can be equal to 0, 1 or 2. If $\dim(Y) = 0$, then~$Y$ is a union of less than or equal to 8 points.
 Since the length of the orbit of a $3$-group is a power of $3$, then for any point $y\in Y$ the stabilizer $\mathrm{Stab}(y) \subset G$
 is a subgroup of index $3$ or less.
 
 If $\dim(Y) = 1$, then $Y$ is a union of curves $C_1\cup\dots\cup C_k$ in $\p^3$ and 
 \begin{equation*}
  \deg(C_1)+ \dots + \deg(C_k) = 4.
 \end{equation*}
 If $Y$ is singular, then there are less than or equal to $6$ singular points on it. The group $G$ preserves singular locus of $C$;
 thus the stabilizer of a singular point is s subgroup of index 3 or less.
 
 If $Y$ is a smooth irreducible curve, it is an intersection of two general quadric surfaces. Thus, the family of quadric surfaces 
 passing through $Y$ is of dimension $1$. There are $4$ surfaces in this family which are singular. The group $G$ fixes the family 
 and the set of singular surfaces in it. Consider the subgroup $H$ in $G$ which preserves 4 degenerate fibers in the family. The index 
 of $H$ is less than or equal to $3$ and $H$ fixes the family pointwise. Therefore, $H$ acts trivially on $Y$.
 
 If $\dim(Y)=2$, then $Y$ is a quadric surface in $\p^3$. If $Y$ is singular, then it is a cone and its vertex is either a point
 or a line. In any case by Proposition \ref{gl2} the group $G$ fixes a point on $Y$. If $Y$ is smooth, then $Y\cong \p^1\times\p^1$.
 By Proposition \ref{gl2} there exists a $G$-fixed point on $Y$.
\end{proof}

\bibliographystyle{alpha}
\bibliography{cremona}

\end{document}